\newcommand{\C}{{\mathbb C}}
\newcommand{\const}{\operatorname{const.}}
\newcommand{\diam}{\operatorname{diam}}
\newcommand{\Dom}{\operatorname{Dom}}
\newcommand{\Hess}{\operatorname{Hess}}
\newcommand{\inj}{\operatorname{inj}}
\newcommand{\R}{{\mathbb R}}
\newcommand{\Ric}{\operatorname{Ric}}
\newcommand{\Rm}{\operatorname{Rm}}
\newcommand{\spann}{\operatorname{span}}
\newcommand{\vol}{\operatorname{vol}}
\newcommand{\Z}{{\mathbb Z}}
\numberwithin{equation}{section}
\theoremstyle{plain}
\newtheorem{definition}[equation]{Definition}
\newtheorem{lemma}[equation]{Lemma}
\newtheorem{theorem}[equation]{Theorem}
\newtheorem{proposition}[equation]{Proposition}
\newtheorem{corollary}[equation]{Corollary}
\theoremstyle{remark}
\newtheorem{remark}[equation]{Remark}
\begin{document}

\title[Comparison geometry]
      {Comparison geometry of holomorphic bisectional curvature
        for K\"ahler manifolds and limit spaces}

\author{John Lott}
\address{Department of Mathematics\\
University of California, Berkeley\\
Berkeley, CA  94720-3840\\
USA} \email{lott@berkeley.edu}

\thanks{Research partially supported by NSF grant
DMS-1810700}
\date{January 2, 2021}
\begin{abstract}
  We give an analog of triangle comparison for K\"ahler manifolds with a lower
  bound on the holomorphic bisectional curvature.  We show that the
  condition passes to noncollapsed Gromov-Hausdorff limits. We discuss
  tangent cones and singular K\"ahler spaces.
\end{abstract}

\maketitle

\section{Introduction} \label{sect1}

Holomorphic bisectional curvature is a K\"ahler analog of Riemannian
sectional curvature.  We recall the definition in Section \ref{sect3}.
There is a well developed theory of Riemannian manifolds with lower
sectional curvature bounds, including such topics as triangle
comparison, Gromov-Hausdorff limits and Alexandrov spaces.  The
goal of this paper is to give K\"ahler analogs.

To state the first main result, we define a modified distance-squared
function.
Given $d \ge 0$ and $K \in \R$, define $d_K \ge 0$ by
\begin{equation} \label{1.1}
  d_K^2  =
  \begin{cases}
    - \: \frac{4}{K} \log \cos \left( d \sqrt{\frac{K}{2}} \right) &
    \mbox{ if } K > 0, \\
    d^2 & \mbox{ if } K = 0, \\
    \frac{4}{-K} \log \cosh \left( d \sqrt{\frac{-K}{2}} \right) &
    \mbox{ if } K < 0.
\end{cases}
\end{equation}
(If $K > 0$ then we restrict to $d \le \frac{\pi}{\sqrt{2K}}$.)
Let $M$ be a complete K\"ahler manifold.
Given $p \in M$ and $K \in \R$,  let $d_p \in C(M)$ be the
distance from $p$ and define $d_{K,p}$ using (\ref{1.1}), replacing
the $d$ in the
right-hand side by $d_p$.

We write $BK \ge K$ if the holomorphic bisectional curvatures of $M$ are
bounded below by $K \in \R$. We prove the following analog of
triangle comparison.

\begin{theorem} \label{1.2}
  Let $M$ be a complete
  K\"ahler manifold. Given $K \in \R$, the manifold $M$ has $BK \ge K$
  if and only if it satisfies the following property.
  Let $i \: : \: \overline{D^2} \rightarrow M$ be an embedding of a disk
  into $M$, that is holomorphic on $D^2$. Let $\Sigma$ be the
  image of $i$. 
  Let $dA$ denote the area form on $\Sigma$.
  Let $z$ be the local coordinate on $D^2$ and let
  $\theta \in [0, 2 \pi)$ be the local coordinate on
    $\partial{\overline{D^2}}$.
  Then
  \begin{equation} \label{1.3}
    d^2_{K,p}(0) \ge \frac{2}{\pi} \iint_\Sigma \log |z| \: dA +
    \frac{1}{2\pi} \int_{\partial \Sigma} d^2_{K,p}(\theta) d\theta,
  \end{equation}
  where the
  ``$0$'' on the left-hand side denotes $i(0)$, the center of $\Sigma$.
  \end{theorem}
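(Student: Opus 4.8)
\medskip
\noindent\textit{Proof strategy.}
Let $\omega$ denote the K\"ahler form of $M$. The plan is to recognize inequality (\ref{1.3}) as the integral over the holomorphic disk of a pointwise complex Hessian comparison, so that the theorem reduces to establishing that comparison. Write $f=d_{K,p}^2$ and, for an embedding $i\colon\overline{D^2}\to M$ that is holomorphic on $D^2$ with image $\Sigma$, set $u=f\circ i$. Since $i$ is holomorphic, $i^{*}$ commutes with the Dolbeault operators $\partial$ and $\bar\partial$, so on $D^2$ one has $\sqrt{-1}\,\partial\bar\partial u=\tfrac12(\Delta u)\,dx\wedge dy$ (flat Laplacian and Lebesgue measure in the coordinate $z=x+iy$) while $i^{*}\omega=dA$ is the area form of $\Sigma$. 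Hence the $(1,1)$-form inequality $\sqrt{-1}\,\partial\bar\partial f\le 2\,\omega$ on $M$ pulls back to $\Delta u\le 4\rho$ on $D^2$, where $dA=\rho\,dx\wedge dy$. Feeding this into Green's identity for $D^2$ with Green's function $\tfrac1{2\pi}\log|z|$ (which vanishes on $\partial D^2$ and has outward normal derivative $\tfrac1{2\pi}$ there), and using $\log|z|\le 0$ on $D^2$, gives exactly
\[
u(0)\ \ge\ \frac1{2\pi}\int_{D^2}(\log|z|)(4\rho)\,dx\,dy+\frac1{2\pi}\int_{0}^{2\pi}u\,d\theta\ =\ \frac2\pi\iint_\Sigma\log|z|\,dA+\frac1{2\pi}\int_{\partial\Sigma}d_{K,p}^2(\theta)\,d\theta,
\]
which is (\ref{1.3}). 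Conversely, for smooth $u$, Green's identity is an equality, so (\ref{1.3}) is equivalent to $\int_{D^2}(\log|z|)(\Delta u-4\rho)\,dx\,dy\ge0$. Thus it suffices to prove: $BK\ge K$ \emph{if and only if} $\sqrt{-1}\,\partial\bar\partial d_{K,p}^2\le 2\,\omega$ for every $p$, understood pointwise where $d_{K,p}^2$ is smooth and in the barrier sense along the cut locus $\operatorname{Cut}(p)$. (Note that (\ref{1.1}) is a convergent power series in $d^{2}$ near $d=0$, so $d_{K,p}^2$ is smooth near $p$.)

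For the forward implication I would fix $q\neq p$ not conjugate to $p$ along the minimizing unit-speed geodesic $\gamma\colon[0,\ell]\to M$ from $p$ to $q$, and compute the complex Hessian of $f=\phi(d_p)$ at $q$, where $\phi$ is given by (\ref{1.1}). Since $J$ is parallel, parallel transport along $\gamma$ preserves $T^{1,0}M$, and the complex Hessian of $d_p^2$ can be computed by a second variation along a holomorphic family of geodesics emanating from $p$, equivalently via complex Jacobi fields valued in $T^{1,0}M$. The essential point is that the curvature term in the resulting index form is \emph{not} an individual sectional curvature but a combination of the type $R(\dot\gamma,W,W,\dot\gamma)+R(\dot\gamma,JW,JW,\dot\gamma)$ for parallel unit $W\perp\dot\gamma$, which by the K\"ahler symmetries of the curvature tensor is, up to a positive constant, the holomorphic bisectional curvature of the pair $(\dot\gamma,W)$, hence $\ge K$. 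A Riccati/index-form comparison against the corresponding model ODE then yields the bound, and (\ref{1.1}) is exactly the function making it an equality on the complex space form of bisectional curvature $K$ (there $d_{K,p}^2$ differs from a global K\"ahler potential for $2\omega$ by a pluriharmonic term, so $\sqrt{-1}\,\partial\bar\partial d_{K,p}^2=2\omega$ identically). This establishes the bound off $\operatorname{Cut}(p)$; along $\operatorname{Cut}(p)$, $d_p$ is locally a minimum of the smooth distance functions along nearby geodesics, so $f$ admits $C^2$ upper barriers with the same Hessian bound, and Calabi's argument promotes this to $\Delta u\le 4\rho$ in the distributional sense on $D^2$. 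Green's identity, in its distributional form, then gives (\ref{1.3}).

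For the converse, suppose (\ref{1.3}) holds for every such embedding and every $p$. Fix $q\in M$ and a $(1,0)$-vector $V$ at $q$, and fix any $p$ close enough to $q$ that $d_{K,p}^2$ is smooth near $q$. For small $\varepsilon$ choose an embedded holomorphic disk $i_\varepsilon\colon\overline{D^2}\to M$ centered at $q$, with $i_\varepsilon'(0)$ a multiple of $V$ and image of diameter $O(\varepsilon)$. By the smooth form of the reduction above, (\ref{1.3}) for $i_\varepsilon$ is equivalent to $\int_{D^2}(\log|z|)(\Delta u_\varepsilon-4\rho_\varepsilon)\,dx\,dy\ge0$; dividing by $\varepsilon^{2}$, letting $\varepsilon\to0$, and using $\log|z|\le0$, this forces $(\sqrt{-1}\,\partial\bar\partial d_{K,p}^2)(V,\bar V)\le 2\,\omega(V,\bar V)$ at $q$. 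Letting $q$, $V$, and $p$ vary, we obtain the whole family of pointwise bounds $\sqrt{-1}\,\partial\bar\partial d_{K,p}^2\le2\omega$. Finally, expanding the forward second-variation computation to second order in $d(p,q)$ shows that this family of bounds is equivalent to $BK\ge K$ pointwise: a bisectional curvature strictly below $K$ at $q$ in directions $(\dot\gamma,V)$ would make $(\sqrt{-1}\,\partial\bar\partial d_{K,p}^2)(V,\bar V)$ strictly exceed $2\,\omega(V,\bar V)$ for $p$ a short distance back along $\gamma$, contradicting the bound. Hence $BK\ge K$.

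The main obstacle is the forward Hessian comparison. Three things have to be done: (i) identify the correct complexified second-variation quantity whose sign is controlled by bisectional rather than sectional curvature, which uses the identity expressing $R(\dot\gamma,W,W,\dot\gamma)+R(\dot\gamma,JW,JW,\dot\gamma)$ as a holomorphic bisectional curvature together with care about how the parts of $X$ and $JX$ transverse to $\dot\gamma$ interact; (ii) verify that (\ref{1.1}) is the sharp comparison function, i.e.\ that it solves the associated Riccati-type ODE with the correct initial data, equivalently that it is (up to normalization and a pluriharmonic term) a K\"ahler potential on the model complex space form; and (iii) handle the low regularity of $d_{K,p}^2$ along the cut locus so that the pointwise inequality becomes a genuine distributional inequality that can be fed into Green's identity. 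Steps (ii) and (iii) are technical but routine; step (i) is the conceptual heart of the argument.
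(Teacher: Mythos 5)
Your strategy is essentially the paper's own route: the paper proves the forward direction by establishing $\sqrt{-1}\partial\overline{\partial}d_{K,p}^2/2\le\omega$ as currents (Proposition \ref{3.6}), citing \cite{Tam-Yu (2012)} for the smooth comparison off the cut locus and using exactly the Calabi-type barrier $F_K\circ(d_{\gamma(\epsilon)}+\epsilon)$ you describe, then restricts to the disk and uses the Green/Dirichlet representation as you do; the converse in Proposition \ref{3.15} is your same localization, carried out by Taylor-expanding $d_{K,p}^2$ to fourth order (via the energy-perturbation formula) and plugging a two-scale disk $i(w)=\epsilon_1 wX+\epsilon_2 Y$, $\epsilon_1\ll\epsilon_2$, into the integral identity (\ref{3.27}) --- the same computation you reorganize through the intermediate pointwise Hessian bound, with the same essential point that the disk direction and the displacement direction are the two members of the bisectional pair. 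Two caveats: your step (i)--(ii), which you call the conceptual heart and leave as a sketch, is precisely \cite[Theorem 2.1]{Tam-Yu (2012)} (plus the check that (\ref{1.1}) gives equality on the model), so it can simply be cited rather than re-derived; and your argument omits the borderline case $K>0$ with $\diam(M)=\frac{\pi}{\sqrt{2K}}$, where $d_{K,p}$ can blow up and the barrier $F_K\circ(d_{\gamma(\epsilon)}+\epsilon)$ need not be defined --- the paper handles this by proving the bound for $\lambda^2K$, $\lambda\in(0,1)$, and letting $\lambda\rightarrow 1$ by monotonicity, and your write-up needs the same (short) extra step.
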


Next, we consider noncollapsing sequences of complete pointed
K\"ahler manifolds with $BK \ge K$. Lee and Tam showed that after passing
to a subsequence, there is a pointed Gromov-Hausdorff limit that is a
complex manifold
\cite{Lee-Tam (2019)}.  Regarding its geometry,
we show that (\ref{1.3}) holds on the limit.

\begin{theorem}  \label{1.4}
Let $\{(M_i, p_i, g_i)\}_{i=1}^\infty$ be a sequence of pointed
$n$-dimensional complete K\"ahler manifolds with
$BK \ge K$.
Suppose that there is some
$v_0 > 0$ so that for all $i$, we have $\vol(B(p_i, 1)) \ge v_0$. Then after
passing to a subsequence, there is a pointed Gromov-Hausdorff limit
$(X_\infty, p_\infty, d_\infty)$ with the following properties.
\begin{enumerate}
\item $X_\infty$ is a complex manifold.
\item Embedded holomorphic disks $\Sigma$ in $X_\infty$ satisfy (\ref{1.3}), where
$dA$ is now the two dimensional Hausdorff measure coming from $d_\infty$.
 \end{enumerate}
\end{theorem}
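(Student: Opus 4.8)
The plan is to verify property (\ref{1.3}) on $X_\infty$ by approximating a given holomorphic disk in $X_\infty$ by holomorphic disks in the manifolds $M_i$, applying Theorem \ref{1.2} there, and passing to the limit. Statement (1) is the theorem of Lee and Tam \cite{Lee-Tam (2019)}: the noncollapsing hypothesis $\vol(B(p_i,1))\ge v_0$ forces a subsequential pointed Gromov--Hausdorff limit $(X_\infty,p_\infty,d_\infty)$ that is a complex manifold, and moreover the complex structures of the $M_i$ converge to that of $X_\infty$ in a local sense. Pass to such a subsequence, keeping the index $i$, and fix Gromov--Hausdorff approximations relating the $M_i$ to $X_\infty$.

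For (2), let $\iota\colon\overline{D^2}\to X_\infty$ be an embedding, holomorphic on $D^2$, with image $\Sigma$ and center $q=\iota(0)$, and fix the point $p\in X_\infty$ appearing in $d_{K,p}$ (when $K>0$ we restrict, as in Theorem \ref{1.2}, to a $\Sigma$ small enough that all distances that occur are $<\pi/\sqrt{2K}$). The first step is to realize $\Sigma$ as a limit of holomorphic disks in the $M_i$: for all large $i$ I would construct embeddings $\iota_i\colon\overline{D^2}\to M_i$, holomorphic on $D^2$, and points $\tilde p_i\in M_i$, such that, under the Gromov--Hausdorff approximations, $\tilde p_i\to p$, $\iota_i\to\iota$ uniformly on $\overline{D^2}$, and the induced area measures $\iota_i^{*}dA_i$ on $\overline{D^2}$ converge weakly to $\iota^{*}dA$ while remaining uniformly absolutely continuous near $z=0$. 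To obtain the $\iota_i$, express $\iota$ in holomorphic coordinate charts on $X_\infty$, transplant the coordinate functions through the almost--$J_i$--holomorphic charts supplied by the Lee--Tam convergence to get maps into $M_i$ that are approximately $J_i$--holomorphic, and correct them to genuine $J_i$--holomorphic embeddings by the standard Cauchy--Riemann/implicit function iteration for (pseudo)holomorphic disks, which for $i$ large produces a small correction, hence an embedding close to $\iota$, with $C^1_{\mathrm{loc}}$ control on $D^2$.

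The second step applies Theorem \ref{1.2} in $M_i$ to the disk $\Sigma_i=\iota_i(\overline{D^2})$ and the point $\tilde p_i$:
\begin{equation*}
  d^2_{K,\tilde p_i}(\iota_i(0))\ \ge\ \frac{2}{\pi}\iint_{\overline{D^2}}\log|z|\;\iota_i^{*}dA_i\ +\ \frac{1}{2\pi}\int_0^{2\pi} d^2_{K,\tilde p_i}\big(\iota_i(e^{\mathrm i\theta})\big)\,d\theta .
\end{equation*}
Now let $i\to\infty$. The function $d^2_{K,\cdot}$ depends continuously on the underlying distance, distances converge along the approximations, $\tilde p_i\to p$, and $\iota_i\to\iota$ uniformly on $\overline{D^2}$, so the left-hand side tends to $d^2_{K,p}(q)$ and, by uniform convergence of the bounded integrands, the boundary integral tends to $\frac{1}{2\pi}\int_0^{2\pi}d^2_{K,p}(\iota(e^{\mathrm i\theta}))\,d\theta$. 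Since $\log|z|$ is nonpositive and Lebesgue integrable on $D^2$, the uniform absolute continuity of the $\iota_i^{*}dA_i$ near $0$ upgrades their weak convergence to full convergence $\iint_{\overline{D^2}}\log|z|\,\iota_i^{*}dA_i\to\iint_{\overline{D^2}}\log|z|\,\iota^{*}dA$. Combining the three limits yields (\ref{1.3}) on $X_\infty$, with $\iint_\Sigma\log|z|\,dA$ read off through $\iota$ and $dA$ the two-dimensional Hausdorff measure of $d_\infty$ restricted to $\Sigma$.

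The main obstacle is the first step, and within it the behaviour near the center $\iota_i(0)$: I need the Lee--Tam convergence to be strong enough not only to lift holomorphic disks but to preclude concentration of the area measures $\iota_i^{*}dA_i$ at $z=0$, which is exactly where the weight $\log|z|$ degenerates. If the metric convergence is locally uniform on the complex manifold $X_\infty$ this is automatic from the $C^1$ control on $\iota_i$; otherwise one would work on a slightly smaller disk and try to bound the area ratios of the calibrated (hence minimal) holomorphic curves $\Sigma_i$ uniformly via a monotonicity/density estimate. The remaining identifications --- that $\iota^{*}$ of the limiting area measure is the $d_\infty$-Hausdorff measure on $\Sigma$, and that the center and boundary terms behave continuously --- are routine consequences of Gromov--Hausdorff convergence once the approximating disks are in place.
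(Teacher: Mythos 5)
Your strategy is genuinely different from the paper's: you keep the inequality of Theorem \ref{1.2} on the approximants and try to push it to the limit along approximating holomorphic disks, whereas the paper (Proposition \ref{4.1}) keeps the disk fixed in $X_\infty$, runs a local (pyramid) K\"ahler--Ricci flow on the $M_i$, passes to a limiting flow on $X_\infty$ for positive time, builds local K\"ahler potentials $\phi_U(t)$ that converge as $t\to 0$, and transfers the differential inequality $\sqrt{-1}\partial\overline{\partial}\, d_{K,p}^2/2\le \omega$ to the limit at the level of potentials (via holomorphic comparison maps from Hamilton's deformation theorem), integrating only at the very end. As written, your route has a genuine gap, concentrated exactly where the paper invests most of its work.

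First, the construction of the disks $\iota_i$: the convergence of complex structures provided by Lee--Tam is obtained only after running the Ricci flow for positive time; the relation between $g_i(t)$ and the original metric $g_i=g_i(0)$ is only a distance-distortion (biH\"older/Gromov--Hausdorff) estimate. Your Cauchy--Riemann/implicit-function correction needs uniform elliptic estimates, i.e.\ uniform local bounds on the geometry of $(g_i,J_i)$ at time zero, and these do not follow from $BK\ge K$ plus noncollapsing. Second, and more seriously, the middle term: since $\log|z|\le 0$, you need an asymptotic \emph{upper} bound of the weighted areas, $\limsup_i \iint \log|z|\,\iota_i^*dA_i \ge \iint_\Sigma \log|z|\,dA$ with $dA$ the two-dimensional Hausdorff measure of $d_\infty$. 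Two-dimensional Hausdorff measures of surfaces are neither upper nor lower semicontinuous under Gromov--Hausdorff convergence of the ambient spaces, the area forms $dA_i$ are those of the uncontrolled time-zero metrics, and monotonicity/density estimates for minimal disks again require curvature or injectivity-radius bounds you do not have; so this step is not a ``routine consequence'' of GH convergence. Identifying the limiting area on a disk with the $d_\infty$-Hausdorff measure is precisely what the paper proves in Proposition \ref{4.1}(2), using the flow: the rescaled distances $e^{-\beta_k t}d_{g_\infty(t)}$ increase monotonically to $d_\infty$ as $t\to 0$ by (\ref{4.8}), and this monotonicity, together with the uniform convergence of the potentials, forces the time-$t$ area measures of a fixed disk to converge to the Hausdorff measure of $d_\infty$. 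Without a substitute for these flow-based estimates, both the lifting of the disk and the limit passage in the $\log|z|$ term are unsupported, so the argument is incomplete.
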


Some simple examples of such limit spaces come from two dimensional length spaces
with Alexandrov curvature bounded below.
The proof of Theorem \ref{1.4} uses local Ricci flow techniques, as developed by
Bamler-Cabezas-Rivas-Wilking \cite{Bamler-Cabezas-Rivas-Wilking (2019)},
Hochard \cite{Hochard (2019)},
Lee-Tam \cite{Lee-Tam (2017)} and Simon-Topping \cite{Simon-Topping (2017)}.

The content of the paper is as follows.  In Section \ref{sect2} we briefly
recall some facts about Riemannian manifolds with nonnegative
sectional curvature, and their Gromov-Hausdorff limits. In
Section \ref{sect3} we show
\begin{itemize}
\item A complete K\"ahler manifold has $BK \ge K$ if and only if
  $\sqrt{-1} \partial \overline{\partial} d_{K,p}^2/2 \le \omega$ as
  currents.
\item Theorem \ref{1.2} holds.
\item If a Hermitian manifold satisfies (\ref{1.3}) then it must be
  K\"ahler.
\item A domain $M$ in a model space (of constant holomorphic sectional
  curvature)
  satisfies (\ref{1.3}) if and only if the length metric on $M$ is the same
  as the restricted metric from the model space.
\end{itemize}

Section \ref{sect4} is about noncollapsed
pointed Gromov-Hausdorff limits.  We prove 
Theorem \ref{1.4} and construct local K\"ahler potentials
$\{\phi_\alpha\}$ on the limit space.

In Section \ref{sect5} we give a notion of ``$BK \ge K$'' for
possibly singular complex spaces.  We use the notion of
K\"ahler spaces from \cite{Moishezon (1975)}, which is formulated
in terms of local potential functions
$\{\phi_\alpha\}$.  We define metric K\"ahler
spaces and an associated complex Gromov-Hausdorff convergence, which may
be of independent interest.  We say that a
metric K\"ahler space has 
``$BK \ge K$'' if $\phi_\alpha - d_{K,p}^2/2$ is
plurisubharmonic for all $\alpha$ and $p$.  For normal complex spaces,
this is equivalent to (\ref{1.3}) being satisfied. The following
properties hold:
\begin{itemize}
\item
  Given a sequence of metric K\"ahler spaces with ``$BK \ge K$'', if it
  converges in the pointed complex Gromov-Hausdorff sense then the
  limit space has ``$BK \ge K$''.
\item Under the assumptions of Theorem \ref{1.4}, a subsequence converges in
  the pointed complex Gromov-Hausdorff sense.
\item If a K\"ahler orbifold has $BK \ge K$ in the sense of curvature
  tensors then its underlying length space has ``$BK \ge K$''.
\end{itemize}

  Section \ref{sect6} is about tangent cones of the limit spaces from Theorem \ref{1.4}.
  We show
  \begin{itemize}
  \item A tangent cone is a K\"ahler cone that is biholomorphic to
    $\C^n$.
  \item When the distance function from the vertex is radially homogeneous on
    $\C^n$, the tangent cone is an affine cone over a copy of 
    $\C P^{n-1}$ with
     ``$BK \ge 2$'', in the sense of the previous section.
  \end{itemize}

I thank Man-Chun Lee, Gang Liu and Song Sun for helpful comments.
I also thank Man-Chun for pointing out a gap in an earlier version of
the paper, and the referee for useful remarks.

\section{Some facts from Riemannian comparison geometry} \label{sect2}

Let $(M,g)$ be a complete Riemannian manifold.  We consider lower
sectional curvature bounds; for simplicity,
we assume that $(M,g)$ has
nonnegative sectional curvature.
Given $p \in M$, let $d_p \in C(M)$ denote the Riemannian distance from
$p$. Then
\begin{equation} \label{2.1}
  \Hess(d_p^2/2) \le g
\end{equation}
away from the cut locus $C_p$ of $p$.

Let $\{\gamma(t)\}_{t \in [0,L]}$
be a unit-speed geodesic
in $M-C_p$. For brevity, we write $d_p(t)$ for $d_p(\gamma(t))$.
It follows from (\ref{2.1}) that
$\frac{d^2}{dt^2} \left( d_p^2(t)/2 \right) \le 1$, i.e.
$\frac{d^2}{dt^2} \left( d_p^2(t)/2 - t^2/2 \right) \le 0$. That is,
$d_p^2(t) - t^2$ is concave on $[0,L]$. Then
\begin{equation} \label{2.2}
  d_p^2(t) - t^2 \ge \frac{t}{L} (d_p^2(L) - L^2 ) +
  \left( 1-\frac{t}{L} \right) d_p^2(0),
\end{equation}
or
\begin{equation} \label{2.3}
    d_p^2(t) \ge \frac{t}{L} d_p^2(L) +
  \left( 1-\frac{t}{L} \right) d_p^2(0) - t(L-t).
  \end{equation}
Toponogov's theorem says that (\ref{2.3}) remains true without the restriction
that $\gamma$ lies in $M-C_p$.

\begin{remark}  \label{2.4} 
We state some facts without proof.
\begin{enumerate} 
\item Equation (\ref{2.3}), when applied to minimizing geodesics, passes to
  pointed Gromov-Hausdorff limits.  That is, such a limit is a complete
  length space with nonnegative Alexandrov curvature.
\item A noncollapsed limit is a topological manifold \cite{Perelman (1991)}.
\item A tangent cone of a noncollapsed limit is a metric cone.  Its link
  has Alexandrov curvature bounded below by one
  \cite[Corollary 7.10]{Burago-Gromov-Perelman (1992)} and is
  homeomorphic to a sphere \cite[Theorem 1.3]{Kapovitch (2002)}.
\item A Finsler manifold with nonnegative Alexandrov curvature is a
  Riemannian manifold.
\item A polytope in Euclidean space, i.e. a connected
  finite union of top dimensional
  simplices, has nonnegative
  Alexandrov curvature, with respect to the length metric,
  if and only if it is convex.
\end{enumerate}
\end{remark}

\section{Comparison geometry for K\"ahler manifolds with lower bounds on
holomorphic bisectional curvature} \label{sect3}

\subsection{Holomorphic bisectional curvature}

Let $M$ be an $n$-dimensional
K\"ahler manifold.  We let $\omega$ denote its K\"ahler form.
In terms of holomorphic normal coordinates at a point $p$, we have
$\omega(p) = \frac{\sqrt{-1}}{2} \sum_{i=1}^n dz^i \wedge d\overline{z}^i$.

Suppose that $n \ge 2$.
Given $p \in M$, if $\sigma$ and $\sigma^\prime$ are $J$-invariant $2$-planes
(i.e. complex lines) in
$T_pM$, write $\sigma = \spann(X, JX)$ and
$\sigma^\prime = \spann(Y, JY)$ for unit vectors $X$ and $Y$. The
holomorphic bisectional curvature of $\sigma$ and $\sigma^\prime$
is $H(\sigma, \sigma^\prime) = R(X,JX,Y,JY)$.
If $\sigma = \sigma^\prime$ then the holomorphic sectional curvature
of $\sigma$ is $H(\sigma, \sigma)$.
From the Bianchi identity,
\begin{equation} \label{3.1}
  R(X,JX,Y,JY) = R(X,Y,X,Y) + R(X,JY,X,JY).
\end{equation}
In particular,
\begin{equation} \label{3.2}
  (\mbox{sect. curv. } \ge \const) \implies
  (\mbox{holo. bisec. curv. } \ge \const) \implies
    (\mbox{Ricci curv. } \ge \const) 
  \end{equation}
where the constants are related by $n$-dependent factors.
Given $K \in \R$, we say that $BK \ge K$ if all of the holomorphic bisectional
curvatures are bounded below by $K$.

We use the curvature notation of
\cite[Chapter 9]{Kobayashi-Nomizu (1969)}. In particular, if
$\{e_i, e_j\}$ are elements of a unitary frame then the corresponding
holomorphic bisectional curvature is $- R_{i \overline{i} j \overline{j}}$.
(Note the minus sign.)
Hence $BK \ge K$ if and only if we have
\begin{equation} \label{3.3}
  - R(X, \overline{X}, Y, \overline{Y}) \ge
  K \left( \langle X, \overline{X} \rangle \langle Y, \overline{Y} \rangle +
  \langle X, \overline{Y} \rangle \langle Y, \overline{X} \rangle
\right)
\end{equation}
for all $X, Y \in T^{(1,0)}M$.
(If $n = 1$ then to be consistent with (\ref{3.3}), we say that
$BK \ge K$ if the holomorphic sectional curvatures are bounded below
by $2K$.)

The metric on $\C P^n$ with constant holomorphic sectional curvature $c$ is
\begin{equation} \label{3.4}
  g_{i \overline{j}} =
  \frac{4}{c} \partial_i \overline{\partial}_j
\log \left( 1 + \frac{c}{4} |z|^2 \right)
\end{equation}
with curvature tensor
\begin{equation} \label{3.5}
  R_{i \overline{j} k \overline{l}} =
  - \: \frac{c}{2} \left( g_{i \overline{j}}
  g_{k \overline{l}} +
  g_{i \overline{l}}
  g_{k \overline{j}} \right).
\end{equation}
The Riemannian sectional curvatures lie in $\left[ \frac{c}{4}, c \right]$.
The holomorphic bisectional curvatures lie in $\left[ \frac{c}{2}, c \right]$.
The diameter is $\pi c^{- \: \frac12}$.
(If $n=1$ then the Riemannian sectional curvature and the holomorphic
bisectional curvature are $c$, and the diameter is 
$\pi c^{- \: \frac12}$.)

  If $BK \ge K > 0$ then $\diam(M) \le \frac{\pi}{\sqrt{2K}}$
  \cite{Li-Wang (2005)}. It seems to be open whether equality
  implies that $(M,g)$ is the Fubini-Study metric on $\C P^n$, up to a
  constant \cite{Liu-Yuan (2018),Tam-Yu (2012)}.

  A compact K\"ahler manifold with positive holomorphic
  bisectional curvature is
  biholomorphic to a complex projective space
  \cite{Mori (1979),Siu-Yau (1980)}. The nonnegative case was described in
  \cite{Mok (1988)}. Alternative proofs of these results, along with
  extensions to transverse Sasakian geometry, are in
  \cite{He-Sun (2016), He-Sun (2015)}.
  
\subsection{Differential inequality for smooth K\"ahler manifolds}

We now give a K\"ahler analog of (\ref{2.1}), for $BK \ge K$.

For $p \in M$, let $d_p$ denote the distance function from $p$ and define
$d_{K,p}$ using (\ref{1.1}), with $d$ replaced by $d_p$.

\begin{proposition} \label{3.6}
  Let $M$ be
  a complete K\"ahler manifold. If $BK \ge K$ then for all $p \in M$,
\begin{equation} \label{3.7}
  \sqrt{-1} \partial \overline{\partial} d_{K,p}^2/2 \le \omega
\end{equation}
as currents on $M$.
\end{proposition}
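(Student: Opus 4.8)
The plan is to reduce the current inequality \eqref{3.7} to a pointwise computation along real geodesics emanating from $p$, away from the cut locus, and then invoke a standard support/smoothing argument to handle the cut locus and promote pointwise estimates to the statement about currents. First I would note that $d_{K,p}^2/2$ is a locally Lipschitz function, so $\sqrt{-1} \partial \overline{\partial} (d_{K,p}^2/2)$ is well defined as a current of order zero; to prove \eqref{3.7} it suffices to show that for every $(1,0)$-vector $X$ of unit norm, the distributional quantity $(\sqrt{-1} \partial \overline{\partial} d_{K,p}^2/2)(X, \overline{X})$ is bounded above by $\langle X, \overline{X} \rangle$, and by a standard Calabi-type trick it is enough to establish this at points where $d_p$ is smooth (the cut locus has measure zero and the function $d_{K,p}^2/2$ is an upper envelope behaviour that is compatible with the barrier argument). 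So the heart of the matter is: at a point $q \notin C_p \cup \{p\}$, show that the complex Hessian of $d_{K,p}^2/2$ is bounded above by the metric.

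The key computation is the second-variation estimate along the unique minimizing geodesic $\gamma$ from $p$ to $q$. Write $r = d_p$ and let $f(r)$ be the function with $d_{K,p}^2/2 = f(r)$; from \eqref{1.1} one reads off $f(r) = -\frac{2}{K} \log \cos(r\sqrt{K/2})$ for $K>0$, and similarly in the other cases, so that $f$ satisfies the ODE $f'' + \sqrt{2K}\, \tan\!\big(r\sqrt{K/2}\big) \cdot (\text{something}) = \ldots$; more usefully, $f$ is characterized by the fact that in the constant-holomorphic-sectional-curvature model space of curvature $2K$ (so that $BK = K$), the function $d_{K,p}^2/2$ has complex Hessian exactly equal to $\omega$. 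I would then compute $\partial \overline{\partial} f(r)$ in terms of $f'(r)$, $f''(r)$, $\partial r$, $\overline{\partial} r$ and $\partial \overline{\partial} r$, pair against $X \in T^{(1,0)}M$, split $X$ into its component along the complex line $\spann(\gamma', J\gamma')$ and its component in the orthogonal complex subspace, and estimate each piece. The radial piece is controlled by the ODE for $f$ (it is an equality in the model). The orthogonal piece is controlled by the Hessian comparison for $r$ in directions spanned by $Y, JY$ with $Y \perp \gamma'$: this is exactly where $BK \ge K$ enters, via the Jacobi equation for the index form restricted to holomorphic planes $\spann(\gamma', J\gamma')$ versus $\spann(Y, JY)$, using \eqref{3.3}. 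One compares the actual Jacobi fields to those of the model space, obtaining $\Hess r (Y,Y) + \Hess r(JY, JY) \le$ (model value) whenever the relevant bisectional curvature is $\ge K$, and then the choice of $f$ in \eqref{1.1} is precisely what makes $f''(r)|\partial r(X)|^2 + f'(r)\big(\Hess r\big)$-type combination collapse to $\le \langle X, \overline X\rangle$.

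The main obstacle, as usual in comparison geometry, is the cut locus: away from $C_p$ the argument is a clean second-variation computation, but \eqref{3.7} is asserted globally as currents. I would deal with this in the standard way: for $q \in C_p$, use the existence of a smooth \emph{lower} barrier for $r$ from above (a "support function" built from a minimizing geodesic from a nearby point $p'$), note that $f$ is monotone so composition preserves the barrier inequality in the right direction, and conclude that $d_{K,p}^2/2 - (\text{smooth function with Hessian} \le \omega)$ is viscosity/distributionally bounded in the needed sense; alternatively, approximate $d_p$ by smooth functions and pass to the limit using that the set $\{d_{K,p}^2/2 \le c\}$ is exhausted by the complement of $C_p$ up to measure zero. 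A secondary technical point is the singularity at $p$ itself, where $d_p^2/2$ is smooth anyway (it equals $\tfrac12 r^2$ to leading order and $d_{K,p}^2 = d_p^2 + O(d_p^4)$), so no difficulty arises there. I expect the write-up to devote most of its length to the Jacobi field comparison on holomorphic $2$-planes, which is the genuinely K\"ahler input, with the cut-locus handling being a routine adaptation of the Riemannian argument recalled in Section \ref{sect2}.
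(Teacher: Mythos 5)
Your overall strategy is the same as the paper's: establish the smooth Hessian comparison away from the cut locus (the paper simply quotes \cite[Theorem 2.1]{Tam-Yu (2012)} plus a calculation, whereas you propose to rederive it via the Jacobi field/index form comparison on the planes $\spann(\gamma',J\gamma')$ and $\spann(Y,JY)$ -- that is exactly the content of the cited theorem, and your identification of the model as constant holomorphic sectional curvature $2K$ is the right normalization), and then handle the cut locus by the Calabi support-function trick with the shifted barrier $F_K\circ(d_{\gamma(\epsilon)}+\epsilon)$, concluding a barrier, hence viscosity, hence distributional inequality. One caution: your parenthetical reduction ``the cut locus has measure zero'' is not by itself a valid step -- for a Lipschitz function the distributional complex Hessian can charge the cut locus, and it is precisely the barrier argument (not the measure-zero remark, and not your alternative smoothing suggestion, which would need the same justification) that rules out a bad singular part; since you do supply the barrier argument, this is a presentational point rather than a fatal one.

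The genuine gap is the case $K>0$ with $\diam(M)=\frac{\pi}{\sqrt{2K}}$, which your write-up never addresses and which your argument as described does not cover. In that case $d_{K,p}^2$ equals $+\infty$ at points of maximal distance from $p$, and near such points the barrier construction breaks down: since $d_{\gamma(\epsilon)}+\epsilon\ge d_p$, the argument of $F_K$ in the shifted barrier reaches or exceeds $\frac{\pi}{\sqrt{2K}}$, where $F_K$ is undefined, and the uniform continuity estimates on $F_K'$ and $F_K''$ that you need to conclude $\triangle_\Sigma(F_K\circ(d_{\gamma(\epsilon)}+\epsilon))\le 4+\epsilon'$ fail because these derivatives blow up at the endpoint. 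The paper therefore first proves the statement under the restriction $\diam(M)<\frac{\pi}{\sqrt{2K}}$, and then treats the maximal-diameter case by a separate limiting argument: since $BK\ge K$ implies $BK\ge\lambda^2K$ and $\diam(M)<\frac{\pi}{\sqrt{2\lambda^2K}}$ for every $\lambda\in(0,1)$, one gets that $\lambda^2\phi+\frac{2}{K}\log\cos\left(\lambda d_p\sqrt{\tfrac{K}{2}}\right)$ is plurisubharmonic, and the monotonicity in $\lambda$ of the second term lets one pass to the limit $\lambda\to 1$ and conclude plurisubharmonicity of $\phi-d_{K,p}^2/2$ (allowing the value $-\infty$). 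Some such additional argument is needed to obtain the proposition in the generality stated; without it your proof only gives \eqref{3.7} when $K\le 0$ or $\diam(M)<\frac{\pi}{\sqrt{2K}}$.
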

\begin{proof}
  Suppose that $BK \ge K$. 
  (If $K > 0$ then we initially restrict to the case when
  $\diam(M) < \frac{\pi}{\sqrt{2K}}$.)
  It follows from \cite[Theorem 2.1]{Tam-Yu (2012)}, along with some
  calculation, that (\ref{3.7}) is satisfied
  smoothly away from the cut locus of $p$. 
  Given $q \in M - \{p\}$, let $\phi$ be a local K\"ahler potential in a
  neighborhood $U$ of $q$, i.e.
  $\omega = \sqrt{-1} \partial \overline{\partial} \phi$. We can
  assume that $p \notin U$.
  To prove (\ref{3.7}),
  we wish to show that $\phi - d_{K,p}^2/2$ is plurisubharmonic.
  For this, it suffices to show that it is subharmonic on any embedded
  holomorphic disk $\Sigma$ in $U$, i.e. that
  $\triangle_{\Sigma} d_{K,p}^2 \le 4$ as measures on $\Sigma$.

  Given $m \in \Sigma$, we will construct a barrier function at $m$.
  Let $\gamma : [0, d(p,m)] \rightarrow M$ be a minimizing unit speed
  geodesic from $p$ to $m$. Let $F_K$ be the function appearing on
  the right-hand side of
  (\ref{1.1}), so $d_K^2 = F_K \circ d$. Then $F_K^\prime \ge 0$ and
  $F_K^{\prime \prime} \ge 0$. For small $\epsilon > 0$, consider
  $F_K \circ (d_{\gamma(\epsilon)} + \epsilon)$. Its value at
  $m$ is $d_{K,p}^2(m)$. As $d_{\gamma(\epsilon)} + \epsilon \ge d_p$,
  it follows that $F_K \circ (d_{\gamma(\epsilon)} + \epsilon) \ge
  d_{K,p}^2$.

  Since $m$ is not in the cut locus
  of $\gamma(\epsilon)$, we now know that
\begin{equation} \label{3.8}
  \triangle_{\Sigma} (F_K \circ d_{\gamma(\epsilon)}) \le 4
  \end{equation}
in a neighborhood of $m$ in $\Sigma$.
As
\begin{equation} \label{3.9}
  \triangle_{\Sigma} (F_K \circ d_{\gamma(\epsilon)}) =
  (F_K^{\prime \prime} \circ d_{\gamma(\epsilon)})
  |\nabla_{\Sigma} d_{\gamma(\epsilon)}|^2 +
  (F_K^{\prime} \circ d_{\gamma(\epsilon)}) \triangle_{\Sigma} d_{\gamma(\epsilon)},
  \end{equation}
it follows that
\begin{equation} \label{3.10}
  (F_K^{\prime} \circ d_{\gamma(\epsilon)}) \triangle_{\Sigma} d_{\gamma(\epsilon)}
  \le 4 - 
  (F_K^{\prime \prime} \circ d_{\gamma(\epsilon)})
  |\nabla_{\Sigma} d_{\gamma(\epsilon)}|^2  \le 4,
  \end{equation}
so
\begin{equation} \label{3.11}
  \triangle_{\Sigma} d_{\gamma(\epsilon)}
  \le \frac{4}{F_K^{\prime} \circ d_{\gamma(\epsilon)}},
\end{equation}
where the denominator is strictly positive in a neighborhood of $m$.

Similarly,
\begin{equation} \label{3.12}
\triangle_{\Sigma} (F_K \circ (d_{\gamma(\epsilon)} + \epsilon)) =
  (F_K^{\prime \prime} \circ (d_{\gamma(\epsilon)} + \epsilon))
  |\nabla_{\Sigma} d_{\gamma(\epsilon)}|^2 +
  (F_K^{\prime} \circ (d_{\gamma(\epsilon)} + \epsilon))
  \triangle_{\Sigma} d_{\gamma(\epsilon)}
\end{equation}
Combining with (\ref{3.10}) and (\ref{3.11}) gives
\begin{align} \label{3.13}
\triangle_{\Sigma} (F_K \circ (d_{\gamma(\epsilon)} + \epsilon)) \le &
\left( (F_K^{\prime \prime} \circ (d_{\gamma(\epsilon)} + \epsilon)) -
(F_K^{\prime \prime} \circ d_{\gamma(\epsilon)}) \right)
  |\nabla_{\Sigma} d_{\gamma(\epsilon)}|^2 + \\
&  \left(  (F_K^{\prime} \circ (d_{\gamma(\epsilon)} + \epsilon)) -
(F_K^{\prime} \circ d_{\gamma(\epsilon)})  \right)
  \triangle_{\Sigma} d_{\gamma(\epsilon)} + 4 \notag \\
  = &
\left( (F_K^{\prime \prime} \circ (d_{\gamma(\epsilon)} + \epsilon)) -
(F_K^{\prime \prime} \circ d_{\gamma(\epsilon)}) \right)
  + \notag \\
&  \left(  (F_K^{\prime} \circ (d_{\gamma(\epsilon)} + \epsilon)) -
(F_K^{\prime} \circ d_{\gamma(\epsilon)})  \right)
  \triangle_{\Sigma} d_{\gamma(\epsilon)} + 4 \notag \\
  \le &
\left( (F_K^{\prime \prime} \circ (d_{\gamma(\epsilon)} + \epsilon)) -
(F_K^{\prime \prime} \circ d_{\gamma(\epsilon)}) \right)
  + \notag \\
&  \left(  (F_K^{\prime} \circ (d_{\gamma(\epsilon)} + \epsilon)) -
(F_K^{\prime} \circ d_{\gamma(\epsilon)})  \right)
  \frac{4}{F_K^{\prime} \circ d_{\gamma(\epsilon)}}
  + 4. \notag
\end{align}
Given $\epsilon^\prime > 0$, using the continuity of
$F_K^\prime$ and  $F_K^{\prime \prime}$, by choosing $\epsilon$ small enough
we can ensure that
$\triangle_{\Sigma} (F_K \circ (d_{\gamma(\epsilon)} + \epsilon))
\le 4 + \epsilon^\prime$
in a neighborhood of $m$ in $\Sigma$. Thus
$\triangle_{\Sigma} d_{K,p}^2 \le 4$ in
the barrier sense, hence in the viscosity sense and in the
distributional sense.
This means that $\phi - d_{K,p}^2/2$ is subharmonic on $\Sigma$.
Thus (\ref{3.7}) holds.

Now suppose that $K > 0$ and
$\diam(M) = \frac{\pi}{\sqrt{2K}}$. Given $\lambda \in (0,1)$, the
metric $g$ also has $BK \ge \lambda^2 K$, while
$\diam(M) < \frac{\pi}{\sqrt{2\lambda^2K}}$.
Hence
$\phi + \frac{2}{\lambda^2 K}
\log \cos \left( \lambda d_p \sqrt{\frac{K}{2}} \right)$
is plurisubharmonic, i.e. 
$\lambda^2 \phi + \frac{2}{K}
\log \cos \left( \lambda d_p \sqrt{\frac{K}{2}} \right)$
is plurisubharmonic.
Using the fact that
$\frac{2}{K} \log \cos \left( \lambda d_p \sqrt{\frac{K}{2}} \right)$
is monotonically nonincreasing in $\lambda$ as $\lambda \rightarrow 1$,
we can pass to the limit to conclude that
$\phi + \frac{2}{K} \log \cos \left(d_p \sqrt{\frac{K}{2}} \right)$
is plurisubharmonic; c.f.
\cite[Proofs of Theorems I.4.15 and I.5.4]{Demailly (2012)}.
This proves the proposition.
\end{proof}

\begin{remark} \label{3.14}
  If $K = 0$ then Proposition \ref{3.6} was proven in
  \cite{Cao-Ni (2005)} by very different means.
  \end{remark}

\subsection{Integral comparison inequality}

We now wish to give an analog of (\ref{2.3}). Comparing (\ref{3.7}) with
(\ref{2.1}), it is clear that instead of integrating over
geodesics, i.e. real curves, we should now integrate over two
dimensional objects, i.e. complex curves.

\begin{proposition} \label{3.15}
  Let $M$ be a complete
  K\"ahler manifold. Given $K \in \R$, the manifold $M$ has $BK \ge K$
  if and only if it satisfies the following property.
  Let $i \: : \: \overline{D^2} \rightarrow M$ be an embedding of a disk
  into $M$, that is holomorphic on $D^2$. Let $\Sigma$ be the
  image of $i$. 
  Let $dA$ denote the area form on $\Sigma$.
  Let $z$ be the local coordinate on $D^2$ and let
  $\theta \in [0, 2 \pi)$ be the local coordinate on
    $\partial{\overline{D^2}}$.
  Then
  \begin{equation} \label{3.16}
    d^2_{K,p}(0) \ge \frac{2}{\pi} \iint_\Sigma \log |z| \: dA +
    \frac{1}{2\pi} \int_{\partial \Sigma} d^2_{K,p}(\theta) d\theta,
  \end{equation}
  where the ``$0$'' on the left-hand side
  denotes $i(0)$, the center of $\Sigma$.
\end{proposition}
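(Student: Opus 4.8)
The plan is to deduce (\ref{3.16}) from the current inequality $\sqrt{-1}\partial\overline{\partial}d_{K,p}^2/2\le\omega$ of Proposition \ref{3.6} (valid for all $p$, and equivalent to $BK\ge K$), and conversely, using nothing beyond the Green's function of the unit disk. Two elementary facts make this work: (i) if $\Sigma$ is an embedded holomorphic disk then $\omega|_\Sigma=dA$ by the Wirtinger identity, so for any local K\"ahler potential $\phi$ one has $\triangle_\Sigma\phi=2$; and (ii) in real dimension two the Laplacian is conformally invariant as an operator into densities, so in the coordinate $z$ one has $\triangle_{\mathrm{euc}}(d_{K,p}^2)\,dx\,dy=(\triangle_\Sigma d_{K,p}^2)\,dA$ as signed measures, where $\triangle_{\mathrm{euc}}=4\,\partial_z\partial_{\overline{z}}$. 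Both facts reduce the proposition to recognizing the single inequality $\triangle_\Sigma d_{K,p}^2\le 4$, which by (i) is just the statement that $\phi-d_{K,p}^2/2$ is subharmonic on $\Sigma$.

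Concretely, I would recall that $\tfrac{1}{2\pi}\log|z|$ is the Green's function of $D^2$ with pole at $0$, so that for $u\in C^2(\overline{D^2})$,
\[
u(0)=\frac{1}{2\pi}\int_0^{2\pi}u(e^{i\theta})\,d\theta+\frac{1}{2\pi}\iint_{D^2}\log|z|\,\triangle_{\mathrm{euc}}u\,dx\,dy,
\]
and that this survives mollification when $u$ is merely Lipschitz with $\triangle_{\mathrm{euc}}u$ a signed Radon measure --- the case for $u=i^*d_{K,p}^2$. Applying it to $u=i^*d_{K,p}^2$ and inserting (ii) gives the exact identity
\[
d_{K,p}^2(0)=\frac{1}{2\pi}\int_{\partial\Sigma}d_{K,p}^2(\theta)\,d\theta+\frac{1}{2\pi}\iint_{\Sigma}\log|z|\,(\triangle_\Sigma d_{K,p}^2)\,dA .
\]
Since $\log|z|\le 0$ on $D^2$, inequality (\ref{3.16}) is thus \emph{equivalent} to $\iint_{\Sigma}\log|z|\,(\triangle_\Sigma d_{K,p}^2-4)\,dA\ge 0$.

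For the forward direction, $BK\ge K$ gives $\sqrt{-1}\partial\overline{\partial}d_{K,p}^2/2\le\omega$ by Proposition \ref{3.6}, which pulled back to $\Sigma$ says precisely $(\triangle_\Sigma d_{K,p}^2)\,dA\le 4\,dA$; so the displayed integral is $\ge 0$ and (\ref{3.16}) holds. For the converse, suppose (\ref{3.16}) holds for all $p$ and every embedded holomorphic disk, and fix $\Sigma$ and a point $q=i(0)$ at which $d_{K,p}^2$ is smooth. If $\triangle_\Sigma d_{K,p}^2(q)>4$, then applying the equivalent inequality to the rescaled subdisk $i(\overline{D_r})$ (reparametrized as $\overline{D^2}$ via $z\mapsto z/r$) for $r$ small, the integrand $\log|z/r|\,(\triangle_\Sigma d_{K,p}^2-4)$ is $\le\delta\,\log|z/r|$ on $D_r$ for some $\delta>0$ by continuity, so the integral is $\le\delta\iint_{D_r}\log|z/r|\,dA<0$, a contradiction. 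Hence $\triangle_\Sigma d_{K,p}^2\le 4$ wherever $d_{K,p}^2$ is smooth; along the cut locus of $p$ the singular part of $\triangle_\Sigma d_{K,p}^2$ is nonpositive, so $\triangle_\Sigma d_{K,p}^2\le 4$ on all of $\Sigma$. Varying $\Sigma$ and the complex direction through each point, $\phi-d_{K,p}^2/2$ is subharmonic on every holomorphic disk, i.e. $\sqrt{-1}\partial\overline{\partial}d_{K,p}^2/2\le\omega$ as currents for every $p$; by the converse half of Proposition \ref{3.6} --- equivalently, by examining the second order behavior of $d_{K,p}^2$ near $p$ as in \cite{Tam-Yu (2012)} --- this yields $BK\ge K$.

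I expect the main obstacle to be the regularity bookkeeping: since $d_p$, hence $d_{K,p}^2$, is only Lipschitz, one must run the Green representation and the localization argument with $\triangle_\Sigma d_{K,p}^2$ interpreted as a signed measure, and check that its singular part along the cut locus is a nonpositive measure --- which follows from the semiconcavity of $d_p$ together with the fact that the function $F_K$ of (\ref{1.1}) is $C^2$, nondecreasing and convex, since $\triangle_\Sigma(F_K\circ d_p)=(F_K''\circ d_p)|\nabla_\Sigma d_p|^2+(F_K'\circ d_p)\triangle_\Sigma d_p$ and $\triangle_\Sigma d_p$ has nonpositive singular part. The remaining ingredients --- the Wirtinger identity $\omega|_\Sigma=dA$, conformal invariance of the two dimensional Laplacian, and the normalization constants (traceable to $\int_0^1\rho\log\rho\,d\rho=-\tfrac14$) --- are routine.
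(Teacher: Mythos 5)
Your forward direction is essentially the paper's argument: Proposition \ref{3.6} gives $\sqrt{-1}\partial\overline{\partial}d_{K,p}^2/2\le\omega$ restricted to $\Sigma$, and the Green representation on the disk (the paper phrases it as comparison with the solution of $\sqrt{-1}\partial\overline{\partial}f/2=\omega_\Sigma$ with boundary values $d_{K,p}^2$) then yields (\ref{3.16}); your constants and the Wirtinger/conformal-invariance bookkeeping are correct. Your reduction of the converse, via the Green identity on rescaled subdisks, to the pointwise inequality $\triangle_\Sigma d_{K,p}^2\le 4$ on every disk --- i.e.\ to $\sqrt{-1}\partial\overline{\partial}d_{K,p}^2/2\le\omega$ for every $p$ --- is also sound (and for this purpose one only needs disks within the injectivity radius of $p$, where $d_{K,p}^2$ is smooth, so the cut-locus measure bookkeeping is not even the delicate point). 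This localization is a legitimate alternative to the paper's contrapositive formulation.

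The gap is the final step of the converse, which you dispose of by invoking ``the converse half of Proposition \ref{3.6}'' or, ``equivalently, the second order behavior of $d_{K,p}^2$ near $p$ as in \cite{Tam-Yu (2012)}.'' Proposition \ref{3.6} as stated has no converse half; the ``if and only if'' asserted in the introduction is itself proved in the paper \emph{through} Proposition \ref{3.15}, so citing it here is circular, and \cite{Tam-Yu (2012)} supplies the forward Hessian comparison, not its converse. What is actually needed --- and what the paper spends (\ref{3.18})--(\ref{3.29}) on --- is the fourth-order expansion of the distance function in normal coordinates, obtained by first variation of the energy: $d_p^2(z)=|z|^2+\tfrac16 R_{i\overline{j}k\overline{l}}z^i\overline{z}^j z^k\overline{z}^l+o(|z|^4)$, whence $\sqrt{-1}\partial\overline{\partial}d_{K,p}^2/2-\omega=\tfrac{1}{12}\sqrt{-1}\,R^\prime_{i\overline{j}k\overline{l}}z^k\overline{z}^l\,dz^i\wedge d\overline{z}^j+o(|z|^2)$ with $R^\prime$ as in (\ref{3.26}). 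One must then test this inequality at base points $z=\epsilon_2 Y$ displaced from $p$ in a direction $Y$ \emph{independent} of the disk direction $X$ (the paper's disk $i(w)=\epsilon_1 wX+\epsilon_2 Y$ with $\epsilon_1\ll\epsilon_2$): it is precisely this two-direction evaluation that produces the bisectional quantity $R^\prime(X,\overline{X},Y,\overline{Y})\le 0$, rather than merely a holomorphic sectional curvature bound, which is all one would extract from disks through $p$ itself. Until this expansion and evaluation are carried out (or a genuine reference for the converse comparison is given), the implication from (\ref{3.16}) to $BK\ge K$ is asserted rather than proved.
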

\begin{proof}
  Suppose that $BK \ge K$. From Proposition \ref{3.6}, or more precisely its
  proof, we know that $\sqrt{-1} \partial \overline{\partial}
  d_{K,p}^2/2 \le \omega_{\Sigma}$ as currents on $\Sigma$.
    The solution to 
    $\sqrt{-1} \partial \overline{\partial} f/2 = \omega_\Sigma$
    on $\Sigma$, with
    $f \Big|_{\partial \Sigma} = d_{K,p}^2 \Big|_{\partial \Sigma}$ has
     \begin{equation} \label{3.17}
f(0) = \frac{2}{\pi} \iint_{\Sigma} \log |z| \: dA +
    \frac{1}{2\pi} \int_{\partial \Sigma} d_{K,p}^2(\theta) d\theta.
    \end{equation}
     As $f - d_{K,p}^2$ is subharmonic on $\Sigma$,
     and vanishes on $\partial \Sigma$,
     inequality (\ref{3.16}) follows.

     Now suppose that the inequality $BK \ge K$ is violated 
     at some point $p$.
     In complex normal coordinates around $p$, the metric is
     \begin{equation} \label{3.18}
       g_{i \overline{j}} = \delta_{i \overline{j}} + \frac12
       R_{i \overline{j} k \overline{l}} z^k \overline{z}^l +
       o(|z|^2),
     \end{equation}
     where $R_{i \overline{j} k \overline{l}}$ is evaluated at $p$.
     Correspondingly,
          \begin{equation} \label{3.19}
            \omega = \frac12 \sqrt{-1} dz^i \wedge d\overline{z}^i +
            \frac14 \sqrt{-1}
            R_{i \overline{j} k \overline{l}} z^k \overline{z}^l dz^i \wedge
            d\overline{z}^j +
       o(|z|^2).
       \end{equation}
     
          In general, $d^2(p_0,p_1)$ is the minimum over $\gamma$ of the energy
\begin{equation}
  E(\gamma) = \int_0^1 g_{ij} \frac{d\gamma^i}{dt} \frac{d\gamma^i}{dt} \: dt,
  \end{equation}
where $\gamma : [0,1] \rightarrow M$ has $\gamma(0) = p_0$ and
$\gamma(1) = p_1$. If $\gamma$ is a unique minimizer and we perturb the metric
by $\delta g$ then to leading order, the squared distance changes by
\begin{equation}\label{3.19.5}
  \delta d^2(p_0, p_1) = \int_0^1 \delta g_{ij} \frac{d\gamma^i}{dt} \frac{d\gamma^i}{dt} \: dt.
  \end{equation}

In our case, for the flat metric the minimizer between $0 \in \C^n$ and
$z \in \C^n$ is $\gamma(t) = tz$. Treating the second term in
(\ref{3.18}) as the perturbation, the change in squared distance is
\begin{equation} \label{3.20}
\frac12 \int_0^1
  R_{i \overline{j} k \overline{l}}
   z^i z^{\overline{j}} (tz^k) (t\overline{z}^l)\: dt  =
\frac16 \: 
  R_{i \overline{j} k \overline{l}} z^i z^{\overline{j}}
  z^k \overline{z}^l  
\end{equation}
Hence since $p = 0$ in the local coordinates,
\begin{equation} \label{3.21}
  d_p^2(z) = |z|^2  + \frac16
  R_{i \overline{j} k \overline{l}}
  z^i z^{\overline{j}} z^k \overline{z}^l + o(|z|^4).
\end{equation}
From (\ref{1.1}),
\begin{equation} \label{3.22}
  d^2_{K,p} = d_p^2 + \frac{1}{12} K d_p^4 + o(d_p^4),
\end{equation}
so
\begin{equation} \label{3.23}
    d_{K,p}^2(z) = |z|^2  + \frac16
  R_{i \overline{j} k \overline{l}}
  z^i z^{\overline{j}} z^k \overline{z}^l
+ \frac{1}{12} K |z|^4
  + o(|z|^4).
  \end{equation}
This gives
\begin{align} \label{3.24}
  \sqrt{-1} \partial \overline{\partial} d_{K,p}^2/2 = &
  \frac12 \sqrt{-1} dz^i \wedge d\overline{z}^i +
  \frac13 \sqrt{-1}   R_{i \overline{j} k \overline{l}}
  z^k z^{\overline{l}} dz^i \wedge d\overline{z}^j +
  \\
  & \frac{1}{12} \sqrt{-1} K \overline{z}^i z^j dz^i \wedge d\overline{z}^j +
  \frac{1}{12} \sqrt{-1} K |z|^2 dz^i \wedge d\overline{z}^i +   
  o(|z|^2). \notag
\end{align}
Equations (\ref{3.19}) and (\ref{3.24}) give
\begin{equation} \label{3.25}
  \sqrt{-1} \partial \overline{\partial} d_{K,p}^2/2 - \omega  =
  \frac{1}{12} \sqrt{-1}
R^\prime_{i \overline{j} k \overline{l}} 
  z^k z^{\overline{l}} dz^i \wedge d\overline{z}^j + o(|z|^2),
\end{equation}
where
\begin{equation} \label{3.26}
  R^\prime_{i \overline{j} k \overline{l}} = 
    R_{i \overline{j} k \overline{l}} + K
    (\delta_{i\overline{j}} \delta_{k\overline{l}} +
    \delta_{i\overline{l}} \delta_{\overline{j}k}).
\end{equation}

If $\Sigma$ is an embedded holomorphic disk in $M$ then
\begin{equation} \label{3.27}
    d_{K,p}^2(0) - \frac{2}{\pi} \iint_\Sigma \log |z| \: dA -
    \frac{1}{2\pi} \int_{\partial \Sigma} d_{K,p}^2(\theta) d\theta =
    \frac{2}{\pi} \iint_\Sigma \log |z| \: \left(
      \sqrt{-1} \partial \overline{\partial} d_{K,p}^2/2 - \omega \right).
\end{equation}

Since $M$ does not have $BK \ge K$
at $p$, there are unit vectors $X,Y \in T_p^{(1,0)}M$ so that
$R^\prime(X,\overline{X},Y, \overline{Y}) > 0$.
(Recall the minus sign in (\ref{3.3}).)

Given $0 < \epsilon_1 < < 
\epsilon_2 < < 1$,
consider a holomorphic
disk $i \: : \: \overline{D^2} \rightarrow M$ given in complex normal
coordinates by
$i(w) = \epsilon_1 w X + \epsilon_2 Y$. 
Let $\Sigma$ be the image of $i$.
Using (\ref{3.25}), the right-hand side of (\ref{3.27}) is
approximately
\begin{align} \label{3.28}
&  \frac{1}{6 \pi} \: \sqrt{-1} \epsilon_1^2 \epsilon_2^2
 (\log \epsilon_2) \:  R^\prime(X,\overline{X},Y, \overline{Y})
  \iint_{D^2} dw \wedge d\overline{w} = \\
&  \frac{1}{3\pi} \: \epsilon_1^2 \epsilon_2^2  (\log \epsilon_2) \:
  R^\prime(X,\overline{X},Y, \overline{Y})
  \iint_{D^2} dA_{D^2}.\notag
\end{align}
Since 
$\log \epsilon_2 < 0$, we conclude that
\begin{equation} \label{3.29}
    d_{K,p}^2(0) - \frac{2}{\pi} \iint_\Sigma \log |z| \: dA -
    \frac{1}{2\pi} \int_{\partial \Sigma} d_{K,p}^2(\theta) d\theta < 0,
\end{equation}
contradicting (\ref{3.16}).
\end{proof}

\begin{remark} \label{3.30}
  There is an analogy between (\ref{2.3}), with $t = \frac{L}{2}$, and
  (\ref{3.16}), where $\frac12 \left( d_p^2(L) + d_p^2(0) \right)$
  is replaced by
  $\frac{1}{2\pi} \int_{\partial \Sigma} d^2_{K,p}(\theta) d\theta$ and
  $- \frac{L^2}{4}$ is replaced by
$\frac{2}{\pi} \iint_\Sigma \log |z| \: dA$.
    
  For any point $q$ in the disk, there is an inequality similar to
  (\ref{3.16}) with $0$ replaced by $q$, obtained by performing a
  holomorphic automorphism of the disk.

Note that the area form $dA$ in (\ref{3.16}) can also be described as the
two-dimensional Hausdorff measure on $\Sigma$. Hence the statement of
(\ref{3.16}) only depends on the complex structure and the metric $d$.
\end{remark}

\subsection{Hermitian manifolds}

One can ask when (\ref{3.16}) holds more generally in the setting of
Hermitian manifolds, rather than K\"ahler manifolds.
It turns out that if (\ref{3.16}) holds for a Hermitian manifold then it
is forced to be K\"ahler.
We now give an analog of Remark \ref{2.4}(4), in which Finsler manifolds are
replaced by Hermitian manifolds, and Riemannian manifolds are replaced by
K\"ahler manifolds.

\begin{proposition} \label{3.31}
  If a Hermitian manifold $M$ satisfies (\ref{3.16}), for all $p \in M$ and
  all holomorphic disks $\Sigma$, then it is K\"ahler.
\end{proposition}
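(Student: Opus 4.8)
The plan is to prove the contrapositive, imitating the second half of the proof of Proposition \ref{3.15} with the torsion of the Hermitian metric taking over the role played there by the curvature defect $R'$. Suppose $M$ is not K\"ahler; then $d\omega(p)\neq 0$ for some $p$. After a linear change of complex coordinates at $p$ we may assume $g_{i\overline{j}}(p)=\delta_{i\overline{j}}$, and after a further quadratic holomorphic change of coordinates $z^a = w^a + \tfrac12 c^a_{kl} w^k w^l$ we may cancel the part of $\partial_l g_{i\overline{j}}(p)$ that is symmetric under $i\leftrightarrow l$ (this is exactly the freedom such a change provides). What remains is
\[
g_{i\overline{j}} = \delta_{i\overline{j}} + \tau_{i\overline{j}l}\, z^l + \overline{\tau_{j\overline{i}l}}\, \overline{z}^l + O(|z|^2), \qquad \tau_{i\overline{j}l} = - \tau_{l\overline{j}i},
\]
where $\tau$ (the torsion) satisfies $d\omega(p)=0 \iff \tau = 0$ in these coordinates; since $M$ is not K\"ahler we may take $p$ so that $\tau \neq 0$ here. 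Correspondingly
\[
\omega = \tfrac{\sqrt{-1}}{2}\, dz^i \wedge d\overline{z}^i + \tfrac{\sqrt{-1}}{2}\bigl( \tau_{i\overline{j}l} z^l + \overline{\tau_{j\overline{i}l}}\, \overline{z}^l \bigr) dz^i \wedge d\overline{z}^j + O(|z|^2).
\]

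The key observation is that $d_p^2$ has \emph{no} cubic term in its Taylor expansion at $p$. Applying the first variation formula (\ref{3.19.5}) along the Euclidean minimizer $t\mapsto tz$ from $0$ to $z$, and treating the linear term of the metric above as the perturbation (as in the passage from (\ref{3.18}) to (\ref{3.21})), the induced change in $d_p^2(z)$ is a fixed multiple of $\tau_{i\overline{j}l} z^i z^l \overline{z}^j + \overline{\tau_{j\overline{i}l}}\, z^i \overline{z}^j \overline{z}^l$, and \emph{both summands vanish identically}: $\tau$ is antisymmetric in its two unbarred indices while $z^i z^l$ (resp.\ $\overline{z}^j\overline{z}^l$) is symmetric. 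Hence $d_p^2 = |z|^2 + O(|z|^4)$, likewise $d_{K,p}^2 = |z|^2 + O(|z|^4)$ using (\ref{3.22}), so $\sqrt{-1}\,\partial\overline{\partial}\, d_{K,p}^2/2 = \tfrac{\sqrt{-1}}{2}\, dz^i\wedge d\overline{z}^i + O(|z|^2)$, and therefore
\[
\sqrt{-1}\, \partial\overline{\partial}\, d_{K,p}^2/2 - \omega = - \tfrac{\sqrt{-1}}{2}\bigl( \tau_{i\overline{j}l} z^l + \overline{\tau_{j\overline{i}l}}\, \overline{z}^l \bigr) dz^i \wedge d\overline{z}^j + O(|z|^2).
\]
This is a nonzero real $(1,1)$-form whose leading term is \emph{linear} in $z$, in contrast with (\ref{3.25}), where for K\"ahler manifolds the analogous defect is $O(|z|^2)$.

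Now run the disk argument. Identity (\ref{3.27}) holds for Hermitian $M$ as well, since its proof only uses the Green's function of the holomorphic disk $\Sigma$ together with $dA = \omega|_\Sigma$. Take $i(w) = \epsilon_1 w X + \epsilon_2 Y$ with $0 < \epsilon_1 \ll \epsilon_2 \ll 1$ and $X,Y \in T_p^{(1,0)}M$ to be chosen. On $\Sigma$ one has $dz^i \wedge d\overline{z}^j = \epsilon_1^2 X^i \overline{X}^j\, dw\wedge d\overline{w}$, so in $\tfrac{2}{\pi}\iint_\Sigma \log|w|\,\bigl(\sqrt{-1}\,\partial\overline{\partial}\,d_{K,p}^2/2 - \omega\bigr)$ the terms linear in $w$ or $\overline{w}$ integrate to zero by rotational symmetry, the $O(|z|^2)$ remainder contributes $O(\epsilon_1^2\epsilon_2^2)=o(\epsilon_1^2\epsilon_2)$, and the part of the linear term that is constant in $w$ (from the offset $\epsilon_2 Y$) gives, since $\sqrt{-1}\iint_{D^2}\log|w|\, dw\wedge d\overline{w} < 0$, a leading contribution
\[
c\, \epsilon_1^2 \epsilon_2\, \re\!\bigl( \tau_{i\overline{j}l}\, X^i \overline{X}^j Y^l \bigr) + o(\epsilon_1^2 \epsilon_2), \qquad c > 0.
\]
Since $\tau\neq 0$ at $p$, the function $(X,Y) \mapsto \tau_{i\overline{j}l} X^i \overline{X}^j Y^l$ is not identically zero: fixing $X$ it is an arbitrary linear function of $Y$, and $X\mapsto \tau_{i\overline{j}l} X^i \overline{X}^j$ is a sesquilinear form, which vanishes identically only if its coefficients do. Replacing $Y$ by $-Y$ if needed, choose $X,Y$ with $\re(\tau_{i\overline{j}l} X^i \overline{X}^j Y^l) < 0$; then $Y$ is automatically not a multiple of $X$ (as $\tau_{i\overline{j}l} X^i \overline{X}^j X^l = 0$), so $i$ is an embedding and $p\notin\Sigma$. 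For $\epsilon_1 \ll \epsilon_2$ small enough the right-hand side of (\ref{3.27}) is then negative, which is exactly the failure of (\ref{3.16}) for this $p$ and $\Sigma$ — a contradiction. Hence $M$ is K\"ahler.

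I expect the second paragraph to be the crux: one must be sure that the torsion, the only first-order invariant of a Hermitian metric at a point, makes no contribution to the cubic Taylor coefficient of $d_p^2$, so that the whole linear part of $\sqrt{-1}\,\partial\overline{\partial}\,d_{K,p}^2/2 - \omega$ originates from $d\omega$ and cannot be cancelled; the remainder is the power counting in $\epsilon_1,\epsilon_2$ precisely as in Proposition \ref{3.15}.
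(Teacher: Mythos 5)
Your proposal is correct and follows essentially the same route as the paper: normalize coordinates so the linear part of the metric is the (antisymmetric) torsion term, observe that this antisymmetry kills the cubic term of $d_p^2$ via the first variation formula, and then violate (\ref{3.16}) with a thin holomorphic disk $i(w)=\epsilon_1 w X+\epsilon_2 Y$ whose offset direction is contracted against the torsion. The only cosmetic difference is that the paper rescales the offset vector by a complex constant to make the relevant contraction a negative real number, where you say ``replace $Y$ by $-Y$''; allowing a complex phase there is the cleaner statement, but this is immaterial.
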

\begin{proof}
Choose complex coordinates around $p$. 
After a change of coordinates, we can write the metric locally as
\begin{equation} \label{3.32}
g = dz^i d\overline{z}^i + T_{\overline{i}jk} z^j dz^k d\overline{z}^i +
\overline{T_{\overline{i}jk}} \overline{z}^j d\overline{z}^k dz^i + O(|z|^2).
\end{equation}
Here $T_{\overline{i}jk}$ is a constant times the torsion tensor at $p$, and is
antisymmetric in $j$ and $k$.

We first compute the leading order terms in $d_p^2$, using (\ref{3.19.5}).
For the flat metric the minimizer between $0 \in \C^n$ and
$z \in \C^n$ is $\gamma(t) = tz$. Treating the second and third terms in
(\ref{3.32}) as the perturbation, the change in squared distance is
\begin{equation}
\int_0^1 (T_{\overline{i}jk}) (tz^j) z^k \overline{z}^i \: dt +
complex \: conjugate.
\end{equation}
  This would be the $O(|z|^3)$ term in $d_p^2$, but it vanishes
  because of the $(jk)$-antisymmetry of $T_{\overline{i}jk}$.
  Hence $d_p^2(z) = |z|^2 + O(|z|^4)$. From (\ref{3.22}), it follows that
  $d_{K,p}^2(z) = |z|^2 + O(|z|^4)$.
  
  Then
\begin{equation} \label{3.33}
\sqrt{-1} \partial \overline{\partial} d_{K,p}^2/2 =
  \frac12 \sqrt{-1} dz^i \wedge d\overline{z}^i +
  O(|z|^2).
\end{equation}
On the other hand,
\begin{equation} \label{3.34}
  \omega = \frac12 \sqrt{-1} dz^i \wedge d\overline{z}^i +
  \frac12 \sqrt{-1} T_{\overline{i}jk} z^j dz^k \wedge d\overline{z}^i +
  \frac12 \sqrt{-1} \:
  \overline{T_{\overline{i}jk}} \overline{z}^j d\overline{z}^k
  \wedge dz^i + O(|z|^2),
\end{equation}
so
\begin{equation} \label{3.35}
  \sqrt{-1} \partial \overline{\partial} d_{K,p}^2/2 - \omega =
  - \:   \frac12 \sqrt{-1} T_{\overline{i}jk} z^j dz^k \wedge d\overline{z}^i
  - \: 
  \frac12 \sqrt{-1} \:
  \overline{T_{\overline{i}jk}} \overline{z}^j d\overline{z}^k
  \wedge dz^i + O(|z|^2).
\end{equation}

Suppose that $M$ is nonK\"ahler, so it has a nonzero torsion tensor at
some point $p$.
Let $\vec{b} \in \C^n$ be such that $\sum_j b^j T_{\overline{i}jk}$ is
a nonzero matrix in $(\overline{i}, k)$. Let 
$\vec{a} \in \C^n$ be such that
$\sum_{i,j,k} \overline{a^i} b^j T_{\overline{i}jk} a^k \neq 0$.
Multiplying $\vec{b}$ by a constant, we can assume that
$\sum_{i,j,k} \overline{a^i} b^j T_{\overline{i}jk} a^k$ is
a negative real number.
Given $0 < \epsilon_1 < < 
\epsilon_2 < < 1$,
consider a small disk $i \: : \: \overline{D^2} \rightarrow M$ given by
$i(w) = \epsilon_1 w \vec{a} + \epsilon_2 \vec{b}$. 
Let $\Sigma$ be the image of $i$. As in the proof of Proposition \ref{3.15},
it follows from (\ref{3.35}) that the right-hand side of (\ref{3.27})
is approximately
\begin{equation}
  -4 \epsilon_1^2 \epsilon_2 \log(\epsilon_2 |\vec{b}|)
  \sum_{i,j,k} \overline{a^i} b^j T_{\overline{i}jk} a^k < 0.
  \end{equation}
Thus (\ref{3.16}) is violated for $\Sigma$, which is a contradiction.
\end{proof}

\subsection{Domains in model spaces}

We now give an analog of Remark \ref{2.4}(5). That is, we look at regions in
$\C^n$ or, more generally, in model spaces of constant holomorphic
sectional curvature.
Since we want to characterize when (\ref{3.16}) holds, we need a complex
structure everywhere.  For that reason, we do not allow boundary, but
simply consider when a domain in the model space satisfies (\ref{3.16}).
One might initially expect that it has something with pseudoconvexity
of the domain.
However, the latter notion is invariant under biholomorphisms, whereas we
have a metric $d$ in addition.  It turns out that the answer is essentially
given by convexity in the usual sense.

Given $K \in \R$, let $M_K$ be the complete simply connected K\"ahler manifold
with constant holomorphic sectional
curvature $2K$. Its metric is given by
(\ref{3.4}), with $c = 2K$. One can check that equality is achieved in
(\ref{3.7}), away from the cut locus of $p$ if $K > 0$.

\begin{proposition} \label{3.36}
  Let $M$ be a connected open subset of $M_K$. Let $d$ be the length
  metric on $M$. 
  Then $M$ satisfies
  (\ref{3.16}) if and only if $d$ coincides with the restriction
  ${\mathcal D}$ of the
  metric from $M_K$.
\end{proposition}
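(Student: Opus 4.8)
The proposition is an ``if and only if''; the nontrivial implication is that (\ref{3.16}) forces $d={\mathcal D}$.

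\emph{The implication $d={\mathcal D}\Rightarrow(\ref{3.16})$.} When $d$ agrees with the restricted metric, every ingredient of (\ref{3.16}) for a holomorphic disk $\Sigma\subset M$ is numerically identical to the corresponding ingredient for $\Sigma$ viewed as a holomorphic disk in $M_K$: the function $d_{K,p}$ on $M$ is the restriction of the one on $M_K$ (since $d(p,\cdot)={\mathcal D}(p,\cdot)$), the measure $dA$ is the two-dimensional Hausdorff measure and hence intrinsic (Remark \ref{3.30}), and $\log|z|$ depends only on the parametrization of $D^2$. As $M_K$ has constant holomorphic sectional curvature $2K$ it satisfies $BK\ge K$, so Proposition \ref{3.15} applied to the complete manifold $M_K$ gives (\ref{3.16}) for $\Sigma$; hence (\ref{3.16}) holds on $M$.

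\emph{The implication $(\ref{3.16})\Rightarrow d={\mathcal D}$.} Fix $p\in M$. The first step is to recover the current inequality $\sqrt{-1}\,\partial\overline{\partial}\,d_{K,p}^2/2\le\omega$ on $M$. The reasoning in the proof of Proposition \ref{3.15} that derives the pointwise/current condition from (\ref{3.16}) is purely local --- identity (\ref{3.27}) together with the test disks $i(w)=\epsilon_1 wX+\epsilon_2 Y$, in which the sign of $\log\epsilon_2$ is exploited --- and in particular does not use completeness of the underlying manifold; applied here, with the barrier construction of Proposition \ref{3.6} used to reach the points of the cut locus of $p$, it yields $\sqrt{-1}\,\partial\overline{\partial}\,d_{K,p}^2/2\le\omega$ as currents on $M$. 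Now compare with the model: since ${\mathcal D}$ is the restriction of the metric of $M_K$ and $M_K$ realizes equality in (\ref{3.7}) away from the cut locus of $p$, the function ${\mathcal D}_{K,p}^2$ obtained from ${\mathcal D}(p,\cdot)$ via (\ref{1.1}) satisfies $\sqrt{-1}\,\partial\overline{\partial}\,{\mathcal D}_{K,p}^2/2=\omega$ on $M$ (for $K\le0$, on all of $M$, there being no cut locus). Set $u:={\mathcal D}_{K,p}^2-d_{K,p}^2$. Because $d\ge{\mathcal D}$ and $F_K$ is nondecreasing, $u\le0$ on $M$, while $u(p)=0$; and $\sqrt{-1}\,\partial\overline{\partial}\,u/2=\omega-\sqrt{-1}\,\partial\overline{\partial}\,d_{K,p}^2/2\ge0$, so the continuous function $u$ is plurisubharmonic, hence subharmonic, on the connected open set $M$. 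By the strong maximum principle $u\equiv0$, i.e.\ $d_{K,p}^2={\mathcal D}_{K,p}^2$; since $F_K$ is strictly increasing this gives $d(p,\cdot)={\mathcal D}(p,\cdot)$, and as $p$ was arbitrary, $d={\mathcal D}$.

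\emph{Main obstacle.} The delicate point is the case $K>0$, where $d_{K,p}^2$ can equal $+\infty$ (wherever the length-metric distance exceeds $\pi/\sqrt{2K}$) and ${\mathcal D}_{K,p}^2$ blows up along the cut locus, so that neither the extraction of the current inequality nor the plurisubharmonicity of $u$ applies as stated. I would deal with this exactly as in the last paragraph of the proof of Proposition \ref{3.6}: run the argument with $\lambda^2K$ in place of $K$ for $\lambda\in(0,1)$ --- where the relevant diameters are strictly below $\pi/\sqrt{2\lambda^2K}$ and all the functions involved are finite and regular enough --- and then let $\lambda\to1$, using the monotonicity of $\tfrac{2}{K}\log\cos(\lambda d_p\sqrt{K/2})$ in $\lambda$. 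Making the passage from (\ref{3.16}) to the current inequality fully rigorous across the length-metric cut locus is the other point needing care, but it is of the same nature as the barrier argument already carried out in Proposition \ref{3.6}.
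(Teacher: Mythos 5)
Your forward implication is fine. For the converse, your plan --- extract the current inequality $\sqrt{-1}\,\partial\overline{\partial}\,d_{K,p}^2/2\le\omega$ from (\ref{3.16}) and then apply the strong maximum principle to $u=\mathcal{D}_{K,p}^2-d_{K,p}^2$ against the model equality in (\ref{3.7}) --- is genuinely different from the paper's argument and is viable for $K\le 0$, but the justification of its central step is wrong as written. The test-disk computation in the converse half of Proposition \ref{3.15} only recovers the pointwise curvature bound $BK\ge K$, which holds automatically for \emph{every} domain in $M_K$ and therefore carries no information about the length metric $d$; and the barrier construction of Proposition \ref{3.6} goes in the opposite direction (from $BK\ge K$ to the current inequality), requires minimizing geodesics and the Hessian comparison of a complete manifold, and uses only the curvature hypothesis --- if it transferred to incomplete domains it would ``prove'' the current inequality for non-convex domains, where Proposition \ref{3.36} itself shows it is false. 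What does work, and what you need to say, is the direct potential-theoretic argument implicit in (\ref{3.17}) and (\ref{3.27}) (and invoked in Proposition \ref{5.4}): for a small affine disk centered at an arbitrary point $q$ with closure in a chart, the Green representation turns (\ref{3.16}) into the super-mean-value inequality for $d_{K,p}^2/2-\phi$ at $q$; since $d_{K,p}^2$ is continuous, this gives subharmonicity of $\phi-d_{K,p}^2/2$ on all such disks and hence the current inequality on $M$. Without this, your first step has no valid proof.

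The second gap is the case $K>0$, which you acknowledge but do not resolve: the $\lambda\to 1$ trick from the end of Proposition \ref{3.6} only cures the borderline-diameter regularity of $F_K$ and does not address the actual difficulties here, namely that $\mathcal{D}_{K,p}^2=+\infty$ on the cut locus of $p$ in $M_K$, that $d_{K,p}^2$ is undefined (or infinite) where $d_p\ge\pi/\sqrt{2K}$, and that the maximum principle must then be localized to the open set where both functions are finite and the conclusion $d_p=\mathcal{D}_p$ propagated across it (using, e.g., that the cut locus has real codimension two). By contrast, the paper's proof sidesteps all of this: given $m_1,m_2$ with $d(m_1,m_2)>\mathcal{D}(m_1,m_2)$, it complexifies a real-analytic path from $m_1$ to $m_2$ (kept away from the cut locus of $m_1$ when $K>0$) into a thin holomorphic disk $\Sigma$ centered at $m_1$; the equality case of (\ref{3.7}) for $\mathcal{D}$ gives (\ref{3.37}), and $d\ge\mathcal{D}$ with strict inequality on an arc of $\partial\Sigma$ makes the right-hand side of (\ref{3.16}) strictly positive while the left-hand side is $d_{K,m_1}^2(m_1)=0$, a contradiction. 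To keep your route you must supply the mean-value derivation of the current inequality and carry out the $K>0$ localization; as it stands the proof is incomplete.
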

\begin{proof}
  If $d = {\mathcal D}$ then (\ref{3.16}) follows immediately from the
  corresponding inequality for $M_K$.
  
  Suppose that (\ref{3.16}) is satisfied for $M$, but
  $d \neq {\mathcal D}$.
  Let $m_1, m_2 \in M$ be points such that
  $d(m_1,m_2) > {\mathcal D}(m_1,m_2)$.
  If $K > 0$, let $D$ denote the cut locus of $m_1$, a copy of $\C P^{n-1}$.
  By continuity of the distance functions, we can assume that
  $m_2 \notin D$.

  Let $\gamma : [0,1] \rightarrow M$ be a smooth embedding with
  $\gamma(0) = m_1$ and $\gamma(1) = m_2$.
  If $K>0$ then we can assume that $\gamma$ is disjoint from $D$.
    By approximation, we can
  assume that $\gamma$ is real analytic. We can then extend $\gamma$ to a
  real analytic embedding $\gamma : [- \epsilon,1+\epsilon] \rightarrow M$
    for some $\epsilon > 0$.

    We claim that
    after possibly reducing $\epsilon$, there is some $\epsilon^\prime > 0$, and a
    continuous embedding
    $\Gamma : [- \epsilon,1+\epsilon] \times
    [-\epsilon^\prime, \epsilon^\prime] \rightarrow M$ that is
    holomorphic on the interior, so that
    $\Gamma(t,0) = \gamma(t)$ for all
    $t \in [- \epsilon,1+\epsilon]$. To see this, suppose
    first that $K = 0$, so $M_K = \C^n$. Let $\{\gamma^i(t)\}_{i=1}^n$
    be the components of $\gamma$. As $\gamma^i$ is real analytic, it
    extends to a holomorphic function $\Gamma^i : 
    (- \epsilon,1+\epsilon) \times
    (-\epsilon_i^\prime, \epsilon_i^\prime) \rightarrow \C$ for some
    $\epsilon^\prime_i > 0$. Taking
    $\epsilon^\prime = \min_i \epsilon^\prime_i$,
    the functions $\{\Gamma^i\}_{i=1}^n$
    combine to give a holomorphic map $\Gamma :
    (- \epsilon,1+\epsilon) \times
    (-\epsilon^\prime, \epsilon^\prime) \rightarrow \C^n$. The
    image of $d\Gamma_{(t,0)}$ is the span of $\gamma^\prime(t)$ and
    $J \gamma^\prime(t)$, a two dimensional space.
    Hence by reducing $\epsilon$ and $\epsilon^\prime$, we can ensure that
    $\Gamma$ is a continuous embedding from 
    $[- \epsilon,1+\epsilon] \times
    [-\epsilon^\prime, \epsilon^\prime]$ to $M$,
    which is holomorphic on the interior.

    If $K < 0$ then the underlying complex structure of $M_K$ is the unit
    ball in $\C^n$, so the same argument can be applied.  If $K > 0$ then
    $M_K - D$ is biholomorphic to $\C^n$, so again the same argument can be
    applied.

    As $\Gamma$ reparametrizes to a holomorphic disk
    $i : \overline{D^2} \rightarrow M$ with image $\Sigma$,
    by a holomorphic automorphism
    of the disk we can assume that $i(0) = m_1$.
    The equality case of
    (\ref{3.7}) with $p = m_1$ implies
    \begin{equation} \label{3.37}
    0 = \frac{2}{\pi} \iint_\Sigma \log |z| \: dA +
    \frac{1}{2\pi} \int_{\partial \Sigma} {\mathcal D}^2_{K,m_1}(\theta) d\theta.
    \end{equation}
    Note that the two dimensional Hausdorff measure $dA$ is the same for
    $d$ and ${\mathcal D}$.
    Since $d(m_1, m_2) > {\mathcal D}(m_1, m_2)$, if
    $\epsilon$ and $\epsilon^\prime$ are small enough then
    $d^2_{K,m_1}(\theta) > {\mathcal D}^2_{K,m_1}(\theta)$ for some
    $\theta$. By continuity of the distance functions, this will also be
    true for all $\theta$ in some open interval.  Thus
        \begin{equation} \label{3.38}
    0 < \frac{2}{\pi} \iint_\Sigma \log |z| \: dA +
    \frac{1}{2\pi} \int_{\partial \Sigma} d^2_{K,m_1}(\theta) d\theta,
        \end{equation}
        which contradicts (\ref{3.16}).
\end{proof}

\section{Noncollapsed Gromov-Hausdorff limits} \label{sect4}

We consider a noncollapsed pointed Gromov-Hausdorff
limit of a sequence of complete K\"ahler
manifolds with $BK \ge K$. Lee and Tam proved 
that the limit has
the structure of a complex manifold \cite{Lee-Tam (2019)}.
This extends earlier results of
Liu \cite{Liu (2018),Liu (2019)}, and is an analog of
Remark \ref{2.4}(2). We wish to study the geometry of the limit.
Although the metric $d$ on the
limit is generally not smooth, we show that it satisfies the
comparison inequality (\ref{3.16}).
This is an analog of
Remark \ref{2.4}(1).

The method of proof is by running the Ricci flow on the
approximants and passing to a limiting Ricci flow
that exists for positive time (locally). Then one is reduced to
understanding the $t \rightarrow 0$ limit of a single Ricci flow,
as opposed to a sequence of Riemannian manifolds.  This approach has
been applied in many other contexts. Since we are not assuming an
upper curvature bound, we apply recent results on local Ricci flow.

The proof also relies on local K\"ahler potentials. We actually prove the
existence of local K\"ahler potentials, of a certain regularity, on
the limit space.

\begin{proposition} \label{4.1}
Let $\{(M_i, p_i, g_i)\}_{i=1}^\infty$ be a sequence of pointed
$n$-dimensional complete K\"ahler manifolds with
$BK \ge K$.
Suppose that there is some
$v_0 > 0$ so that for all $i$, we have $\vol(B(p_i, 1)) \ge v_0$. Then after
passing to a subsequence, there is a pointed Gromov-Hausdorff limit
$(X_\infty, p_\infty, d_\infty)$ with the following properties.
\begin{enumerate}
\item $X_\infty$ is a complex manifold and $d_\infty$ is locally
  biH\"older-equivalent to the distance metric of a smooth Riemannian metric on
  $X_\infty$.
\item There is an open covering $\{U_\alpha\}_{\alpha \in A}$ of $X_\infty$
and plurisubharmonic potentials
$\phi_\alpha \in C(U_\alpha)$, locally Lipschitz with respect to $d_\infty$,
so that $\phi_\alpha - \phi_\beta$ is pluriharmonic on
$U_\alpha \cap U_\beta$, and the following holds.
Let $\Sigma$ be a holomorphic disk in $X_\infty$.
Let $\phi_\alpha
\Big|_{\Sigma \cap U_\alpha}$ be the restriction of $\phi_\alpha$ to
$\Sigma \cap U_\alpha$ and put
 $\omega_\infty \Big|_{\Sigma} =
\sqrt{-1} \partial \overline{\partial} \phi_\alpha
\Big|_{\Sigma \cap U_\alpha}$, a
globally defined measurable $(1,1)$-form on $\Sigma$.
Then 
$\omega_\infty \Big|_{\Sigma}$ equals the two dimensional Hausdorff measure
$\mu_\infty$
coming from $d_\infty \Big|_{\Sigma}$.
\item We have
  \begin{equation} \label{4.2}
d_{K,p}^2(0) \ge 
\frac{2}{\pi} \int_{\Sigma} \log |z| \: d\mu_\infty +
\frac{1}{2\pi} \int_{\partial \Sigma}
d_{K,p}^2 \left( \theta \right)
d\theta.
  \end{equation}
 \end{enumerate}
\end{proposition}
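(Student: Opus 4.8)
The plan is to run a local K\"ahler-Ricci flow on the $(M_i,g_i)$, extract a limiting flow on $X_\infty$, and then pass the comparison inequality of Proposition \ref{3.15} to the time-zero slice. Since $BK \ge K$ forces a lower bound on $\Ric$ and we have the noncollapsing hypothesis $\vol(B(p_i,1)) \ge v_0$, the local Ricci flow theory of Bamler-Cabezas-Rivas-Wilking \cite{Bamler-Cabezas-Rivas-Wilking (2019)}, Hochard \cite{Hochard (2019)}, Lee-Tam \cite{Lee-Tam (2017)} and Simon-Topping \cite{Simon-Topping (2017)}, in its K\"ahler form, produces for some uniform $T>0$ a K\"ahler-Ricci flow $g_i(t)$, $t\in(0,T]$, defined near $p_i$, with uniform bounds $|\Rm_{g_i(t)}| \le C/t$, uniform noncollapsing, and uniform distance-distortion estimates relating $d_{g_i(t)}$ to $d_{g_i}$. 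A lower bound on holomorphic bisectional curvature is essentially preserved by the K\"ahler-Ricci flow, so $g_i(t)$ has $BK \ge K_t$ with $K_t \to K$ as $t \to 0$; the (arbitrarily small) loss $K - K_t$ disappears in the final limit.

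By Hamilton-Cheeger-Gromov compactness for Ricci flows, a subsequence converges to a pointed limit K\"ahler-Ricci flow $(X_\infty, g_\infty(t))$, $t\in(0,T]$, smooth on the relevant region, with $BK \ge K_t$ for each $t$, and with $d_{g_\infty(t)} \to d_\infty$ uniformly on compact sets as $t\to 0$ (the double limit in $i$ and $t$ commutes because of the uniform distance-distortion estimates). Its complex structure coincides with the one of Lee-Tam \cite{Lee-Tam (2019)}, and for any fixed $t_0>0$ the identity map $(X_\infty,d_\infty) \to (X_\infty, d_{g_\infty(t_0)})$ is biH\"older; this gives part (1).

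For part (2), on each chart $U_\alpha \subset X_\infty$ choose $\phi_\alpha(t)$ with $\omega_{g_\infty(t)} = \sqrt{-1} \partial \overline{\partial} \phi_\alpha(t)$, normalized at a base point, so that $\phi_\alpha(t) - \phi_\beta(t)$ is pluriharmonic on $U_\alpha \cap U_\beta$. Uniform $C^0$ and local Lipschitz bounds for $\phi_\alpha(t)$ with respect to $d_\infty$ follow from standard estimates along the flow --- a bound on the Ricci potential (volume comparison together with noncollapsing) and the gradient estimate for the K\"ahler-Ricci flow potential --- combined with the distance-distortion estimate $d_{g_\infty(t)} \le d_\infty + o(1)$. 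Letting $t\to 0$ and using normal families, one obtains $\phi_\alpha \in C(U_\alpha)$, locally Lipschitz for $d_\infty$, plurisubharmonic, with $\phi_\alpha - \phi_\beta$ pluriharmonic on overlaps and $\sqrt{-1} \partial \overline{\partial} \phi_\alpha(t) \to \sqrt{-1} \partial \overline{\partial} \phi_\alpha$ weakly. For a holomorphic disk $\Sigma$ the form $\omega_\infty \big|_\Sigma := \sqrt{-1} \partial \overline{\partial} \phi_\alpha \big|_{\Sigma \cap U_\alpha}$ is globally well defined since potential differences are pluriharmonic; on the flow $\sqrt{-1} \partial \overline{\partial} \phi_\alpha(t) \big|_\Sigma$ is the area form of $g_\infty(t) \big|_\Sigma$, i.e. its two-dimensional Hausdorff measure, and as $t\to 0$ this converges to the two-dimensional Hausdorff measure $\mu_\infty$ of $d_\infty \big|_\Sigma$, so $\omega_\infty \big|_\Sigma = \mu_\infty$. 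I expect this step --- forcing the potentials to converge to something Lipschitz for $d_\infty$, even though $g_\infty(t)$ is only biH\"older-close to $d_\infty$ --- to be the main obstacle.

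Part (3) is the limiting form of Proposition \ref{3.15}. Proposition \ref{3.6} applied to $g_\infty(t)$ says that $\phi_\alpha(t) - d^2_{K_t,p}(t)/2$ is plurisubharmonic; passing to the limit (using $d_{g_\infty(t)} \to d_\infty$ uniformly on compacts, hence $d^2_{K_t,p}(t) \to d^2_{K,p}$, together with the weak convergence of $\sqrt{-1} \partial \overline{\partial} \phi_\alpha(t)$) shows $\phi_\alpha - d^2_{K,p}/2$ is plurisubharmonic on $X_\infty$. Now run the argument from the proof of Proposition \ref{3.15} on a holomorphic disk $\Sigma$: solve $\sqrt{-1} \partial \overline{\partial} f/2 = \omega_\infty \big|_\Sigma$ on $\Sigma$ with $f \big|_{\partial\Sigma} = d^2_{K,p} \big|_{\partial\Sigma}$, so that, using $\omega_\infty \big|_\Sigma = \mu_\infty$,
\[
  f(0) = \frac{2}{\pi} \int_\Sigma \log |z| \, d\mu_\infty + \frac{1}{2\pi} \int_{\partial\Sigma} d^2_{K,p}(\theta) \, d\theta ,
\]
while $f - d^2_{K,p}$ is subharmonic on $\Sigma$ and vanishes on $\partial\Sigma$; hence $d^2_{K,p}(0) \ge f(0)$, which is (\ref{4.2}).
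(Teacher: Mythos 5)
Your overall strategy (pyramid/local K\"ahler--Ricci flow on the approximants, Hamilton--Cheeger--Gromov limit flow, potentials extracted from the flow, then passing the comparison inequality to $t=0$) is the same framework as the paper, and parts (1)--(2) are in the right spirit. But there is a genuine gap at the step you use to get part (3): you invoke ``a lower bound on holomorphic bisectional curvature is essentially preserved by the K\"ahler--Ricci flow, so $g_i(t)$ has $BK \ge K_t$ with $K_t \to K$ as $t\to 0$,'' and then apply Proposition \ref{3.6} to the limit flow $g_\infty(t)$. This is not available. For a fixed $i$ the bound recovers as $t\to 0$ by smoothness, but not uniformly in $i$ (the initial curvatures $|\Rm(g_i)|$ are unbounded in $i$), and the limit flow only inherits uniform-in-$i$ information. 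What the local-flow literature (Bamler--Cabezas-Rivas--Wilking, Simon--Topping, Lee--Tam) gives under $|\Rm|\le \alpha_k/t$ is a lower curvature bound that deteriorates by constants depending on $\alpha_k$ (the paper only records $\Ric(g_i(t)) \ge -\beta_k$), not a bound converging back to $K$ as $t\to 0$; with only a worsened constant $K'$ you would obtain (\ref{4.2}) for $K'$, not $K$. (Applying Proposition \ref{3.6}, stated for complete manifolds, to the merely local limit flow is a further problem.) The paper is structured precisely to avoid this: it returns to time zero on the approximants, constructs local K\"ahler potentials $\eta_i$ for $g_i$ itself (via the same $u_i$, $v_i$, $w_i$ devices and holomorphic comparison maps $\mu_i$), shows $\mu_i^*\eta_i \to \phi_U(0)$ and $\mu_i^* d_{p_i} \to d_p$ uniformly, and then uses that each $\eta_i - d_{K,p_i}^2/2$ is plurisubharmonic (Proposition \ref{3.6} on the complete manifold $M_i$, where $BK\ge K$ holds exactly) so the uniform limit $\phi_U(0) - d_{K,p}^2/2$ is plurisubharmonic. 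You would need to replace your preservation claim with an argument of this kind.

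Two further points are asserted where work is required. First, in part (2) you treat the convergence of the two-dimensional Hausdorff measures of $d_{g_\infty(t)}\big|_\Sigma$ to $\mu_\infty$ as automatic; Hausdorff measure is not continuous under mere uniform convergence of metrics, and the paper's proof uses the monotonicity of $\widehat{d}_t = e^{-\beta_k t} d_{g_\infty(t)}$ in $t$ together with the additive $C_k\sqrt{t}$ distortion error to get the two-sided comparison (equations (\ref{4.20})--(\ref{4.25})). Second, your ``normal families'' extraction of $\phi_\alpha = \lim_{t\to 0}\phi_\alpha(t)$ needs a uniform modulus as $t \to 0$; the paper gets actual uniform convergence from the explicit formula $u(t) = -\int_t^{t'} \log\bigl(\omega^n(s)/\omega^n(t')\bigr)\,ds$ and the integrable bound $\bigl|\log\bigl(\omega^n(s)/\omega^n(t')\bigr)\bigr| \le \const \log(t'/s)$ coming from $|\Rm|\le \alpha_k/t$, which is also what makes the $t=0$ potential canonical enough to compare with the approximants' potentials in part (3). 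Your final step deducing (\ref{4.2}) from plurisubharmonicity of $\phi_U(0)-d_{K,p}^2/2$ on a disk (solving the Poisson problem against the measure $\mu_\infty$) is fine and essentially equivalent to the paper's cutoff argument with $f_\epsilon$, and is not where the difficulty lies.
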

\begin{proof}
(1). We claim first that there are nondecreasing sequences
  $\alpha_k, \beta_k \ge 1$ and a nonincreasing sequence $S_k > 0$ such that
  for any $i$, there is a K\"ahler-Ricci flow
  $g_{i}(t)$ defined on 
$\bigcup_{k=1}^\infty (B_{g_i}(p_i, 2k) \times [0, S_k])$
  with $g_{i}(0) = g_i$, such that
  \begin{equation} \label{4.3}
\left| \Rm \left( g_{i}(t) \right) \right| \le \frac{\alpha_k}{t},
  \end{equation}
  \begin{equation} \label{4.4}
    \Ric(g_i(t)) \ge - \beta_k
    \end{equation}
  and
    \begin{equation} \label{4.5}
\inj_{g_{i}(t)}  \ge \alpha_k^{-1} \sqrt{t}
    \end{equation}
    on $B_{g_i}(p_i, 2k) \times [0, S_k]$.
  This follows from the pyramid Ricci flow
  constructed in \cite[Theorem 1.2]{Lee-Tam (2019)}
  (see also 
  the proof of
  \cite[Theorem 5.1]{Lee-Tam (2017)} and the proof of
  \cite[Theorem 1.3]{McLeod-Topping (2018)}).

  From distance distortion estimates as in
  \cite[Section 27]{Kleiner-Lott (2008)},
there is then a constant
$C_k < \infty$ so that for $t_1 \le t_2$, we have
\begin{equation} \label{4.6}
  d_{g_i(t_1)} - C_k (\sqrt{t_2} - \sqrt{t_1}) \le d_{g_i(t_2)} \le
  e^{\beta_k (t_2 - t_1)} d_{g_i(t_1)}
\end{equation}
on $B_{g_i}(p_i, 2k) \times [0, S_k]$.
  
Using a local version of Hamilton compactness
\cite[Appendix E]{Kleiner-Lott (2008)},
  after passing to a subsequence of the $i$'s, there is a
  pointed smooth manifold $(X_\infty, p_\infty)$ and an exhaustion of
  $X_\infty$ by precompact open sets $\{V_k\}_{k=1}^\infty$ containing
  $p_\infty$, along with
  a limiting
  pointed Ricci flow
  $g_\infty(\cdot)$ defined on
  $\bigcup_{k=1}^\infty (V_k \times (0, S_k))$;
  c.f. \cite[Theorem 1.5]{McLeod-Topping (2018)}.
  More precisely, for each $k \in \Z^+$,
    for large $i$ there is a pointed embedding
    $\phi_{i,k} : V_k \rightarrow
    M_i$
so that
\begin{equation} \label{4.7}
  g_\infty(\cdot) = \lim_{i \rightarrow \infty} \phi_{i,k}^*
  g_i(\cdot)
\end{equation}
on compact subsets of $V_k \times (0, S_k)$,
  in the smooth topology.

  The distance distortion estimate (\ref{4.6}) passes to the
  limiting Ricci flow.
It follows that there is a pointed Gromov-Hausdorff limit
$\lim_{t \rightarrow 0} (X_\infty, p_\infty, g_\infty(t)) =
(X_\infty, p_\infty, d_\infty)$ for some complete metric $d_\infty$.
It then follows that
$\lim_{i \rightarrow \infty} (M_i, p_i, g_i) =
(X_\infty, p_\infty, d_\infty)$ in the pointed Gromov-Hausdorff topology.
We can take $V_k$ to be the metric ball $B(p_\infty, k)$ with respect to
$d_\infty$, so
\begin{equation} \label{4.8}
  d_{g_\infty(t_1)} - C_k (\sqrt{t_2} - \sqrt{t_1}) \le d_{g_\infty(t_2)} \le
  e^{\beta_k (t_2 - t_1)} d_{g_\infty(t_1)}
\end{equation}
on $B(p_\infty, k) \times (0, S_k)$.
Also,
\begin{equation} \label{4.9}
\left| \Rm \left( g_{\infty}(t) \right) \right| \le \frac{\alpha_k}{t}
\end{equation}
on $B(p_\infty, k) \times (0, S_k)$.

From \cite[Lemma 3.1]{Simon-Topping (2017)}, for any $t \in (0, S_k)$,
    the metric
    ball $B(p_\infty, k) \subset X_\infty$ with the metric $d_\infty$
    is 
    biH\"older homeomorphic to the same ball with the metric $g_\infty(t)$.

  Given $k \in \Z^+$ and considering the time interval $(0, S_k)$, since
    the complex structures $J_i$ on
    $B_{g_i}(p_i, 2k) \subset M_i$ satisfy $\nabla_{g_i(t)} J_i = 0$,
    after passing to a subsequence of $i$'s
    we can assume that they converge to
    a complex structure $J_{\infty,k}$ on $B(p_\infty, k)$ that satisfies
    $\nabla_{g_\infty(t)} J_{\infty,k} = 0$. After passing to a
    further subsequence of
    $i$'s, we obtain
    a complex structure $J_\infty$ on $X_\infty$
    that, on $B(p_\infty, k)$, satisfies
    $\nabla_{g_\infty(t)} J_\infty = 0$ for $t \in (0, S_k)$.
      Let $\omega(t)$ denote the corresponding K\"ahler form.
       \\ \\
(2).  Fix $k \in \Z^+$ and fix $t^\prime \in (0, S_k)$.
      For $t \in (0, t^\prime]$, put
      \begin{equation} \label{4.10}
        u(t) = - \int_t^{t^\prime} \log \frac{\omega^n(s)}
        {\omega^n(t^\prime)} \: ds.
        \end{equation}
Then
\begin{equation} \label{4.11}
\omega(t) = \omega(t^\prime) - (t-t^\prime) \Ric(\omega(t^\prime))
+ \sqrt{-1} \partial \overline{\partial}
u(t),
\end{equation}
as can be seen by differentiating in $t$.

Since
\begin{equation} \label{4.12}
  \frac{\partial \omega}{\partial t} \: = \: - \: \Ric(\omega(t)),
\end{equation}
the estimate (\ref{4.9}) implies
\begin{equation} \label{4.13}
\left| \log \frac{\omega^n(s)}{\omega^n(t^\prime)} \right| \le \const \log
\frac{t^\prime}{s}
\end{equation}
for $s \in (0, t^\prime]$, where ``$\const$'' is an $n$-dependent factor times
  $\alpha_k$.
Then
\begin{align} \label{4.14}
  |u(t_1) - u(t_2)| & \le \const \int_{t_1}^{t_2} \log
  \frac{t^\prime}{s} \: ds \\
  & = \const \left( (t_2 - t_1) \log(t^\prime) -
  t_2 \log(t_2) + t_1 \log(t_1) \right). \notag
\end{align}
Hence $\{ u(1/j)\}$ is a uniformly Cauchy sequence and has a limit
$u(0) \in C(B(p_\infty, k))$.

Given $x \in B(p_\infty, k)$, let $U$ be a neighborhood of $x$ that is
biholomorphic to the unit ball in $\C^n$.
There are $v_U, w_U \in C^\infty(U)$ so that
we can write
$\omega(t^\prime)$ on $U$ as $\sqrt{-1} \partial \overline{\partial} v_U$,
and we can write $\Ric(\omega(t^\prime))$ on $U$ as
$\sqrt{-1} \partial \overline{\partial} w_U$.
Doing the same for another point $p^\prime \in B(p_\infty, k)$, we have
$\sqrt{-1} \partial \overline{\partial} (v_U - v_{U^\prime}) = 0$ and
$\sqrt{-1} \partial \overline{\partial} (w_U - w_{U^\prime}) = 0$ on
$U \cap U^\prime$.
For $t \in [0, S_k)$, put
\begin{equation} \label{4.15}
  \phi_U(t) = v_U - (t-t^\prime) w_U + u(t) \Big|_U.
\end{equation}
If $t > 0$ 
then (\ref{4.11}) gives
$\sqrt{-1} \partial \overline{\partial} \phi_U(t) = \omega(t)$, so
$\sqrt{-1} \partial \overline{\partial} (\phi_U(t) - \phi_{U^\prime}(t)) = 0$
on $U \cap U^\prime$. Let $\eta \in \Omega^{n-1,n-1}(U \cap U^\prime)$ be
a smooth compactly supported form. Then
\begin{equation} \label{4.16}
  \int_{X_\infty} (\phi_U(t) - \phi_{U^\prime}(t)) \wedge
  \sqrt{-1} \partial \overline{\partial} \eta =
  \int_{X_\infty} \sqrt{-1} \partial \overline{\partial}
  (\phi_U(t) - \phi_{U^\prime}(t))
  \wedge \eta = 0.
\end{equation}
Using the uniform convergence $\lim_{t \rightarrow 0} u(t) = u(0)$,
it follows that
\begin{equation} \label{4.17}
  \int_{X_\infty} (\phi_U(0) - \phi_{U^\prime}(0)) \wedge
  \sqrt{-1} \partial \overline{\partial} \eta = 0,
\end{equation}
so
$\sqrt{-1} \partial \overline{\partial} (\phi_U(0) - \phi_{U^\prime}(0)) = 0$
as a current.  That is, $\phi_U(0) - \phi_{U^\prime}(0)$ is pluriharmonic.
Similarly, if $\eta$ has compact support in $U$ and is strongly positive
in the sense of
\cite[Chapter 3]{Demailly (2012)} then for $t > 0$, we have
\begin{equation} \label{4.18}
  \int_{X_\infty} \phi_U(t) \wedge \sqrt{-1} \partial \overline{\partial} \eta =
  \int_{X_\infty} \sqrt{-1} \partial \overline{\partial} \phi_U(t) \wedge \eta =
  \int_{X_\infty} \omega(t) \wedge \eta \ge 0.
\end{equation}
Passing to the limit as $t \rightarrow 0$ gives
\begin{equation} \label{4.19}
  \int_{X_\infty} \phi_U(0) \wedge \sqrt{-1} \partial \overline{\partial} \eta
  \ge 0.
\end{equation}
Hence $\sqrt{-1} \partial \overline{\partial} \phi_U(0) \ge 0$ in the sense of
currents,
i.e. $\phi_U(0)$ is plurisubharmonic.

From \cite[Theorem 6]{Cheng-Yau (1975)},
there is a bound on $|\nabla \phi_U(t)|$
in terms of $K$ and
the oscillation of $\phi_U(t)$, the latter of which is uniformly bounded in
$t$. Hence $\phi_U(t)$ is uniformly Lipschitz in $t$, with respect to
$d_{g_\infty(t)}$. This passes to the
limit, to show that $\phi_U(0)$ is Lipschitz with respect to $d_\infty$.

Taking an open cover $\{U_\alpha\}$
of $X_\infty$ by such neighborhoods, we obtain such plurisubharmonic functions
$\phi_\alpha = \phi_{U_\alpha}(0) \in C(U_\alpha)$ so that
$\phi_\alpha - \phi_\beta$ is pluriharmonic on $U_\alpha \cap U_\beta$.

Fixing $k$, for $t \in (0, S_k)$
put $\widehat{d}_t= e^{- \beta_k t} d_{g_\infty(t)}$.
From (\ref{4.8}), we know that
$\widehat{d}_t$ is nonincreasing in $t$.
In addition, it follows from (\ref{4.8}) that
\begin{equation} \label{4.20}
\widehat{d}_t \le d_\infty \le e^{\beta_k t} \widehat{d}_t + C_k \sqrt{t}.
\end{equation}

Let $\Sigma$ be a holomorphic disk in $B(p_\infty, k)$. Then
for $t \in (0, S_k)$, the two dimensional Hausdorff measure
  $\widehat{\mu}_t$ on $\Sigma$
  coming from $\widehat{d}_t \Big|_{\Sigma}$ is $e^{-2\beta_kt}$ times
    $\omega(t) \Big|_{\Sigma} = \sqrt{-1} \partial \overline{\partial}
    \phi_U(t) \Big|_{\Sigma}$.
It follows that $\lim_{t \rightarrow 0} \widehat{\mu}_t$ equals 
$\sqrt{-1} \partial \overline{\partial}
    \phi_U(0) \Big|_{\Sigma} = \omega_\infty \Big|_{\Sigma}$.

    We claim that $\lim_{t \rightarrow 0} \widehat{\mu}_t$ also equals
$\mu_\infty$, the two dimensional Hausdorff measure
    coming from $d_\infty \Big|_{\Sigma}$. To see this,
    let $K \subset \Sigma$ be a compact set lying in some
    $B(p_\infty, k)$. Then
$\mu_\infty(K) = \lim_{\delta \rightarrow 0}
H^2_{d_\infty, \delta}(K)$, where
    \begin{equation} \label{4.21}
H^2_{d_\infty, \delta}(K) = \frac{\pi}{4} \inf \sum_l (\diam_{d_\infty} W_l)^2,
    \end{equation}
and $\{W_l\}$ ranges over finite covers of $K$ by open sets $W_l \subset
    \Sigma$ with
    $\diam_{d_\infty}(W_l) < \delta$.
    The definition of $\widehat{\mu}_t$ is similar, using
    $\widehat{d}_t$.
    Note that $H^2_{d_\infty, \delta}(K)$ is
    nonincreasing in $\delta$.
    Since
    $\widehat{d}_t$ is monotonically nondecreasing as $t \rightarrow 0$, with
    limit $d_\infty$, it follows from (\ref{4.21}) that
    $\widehat{\mu}_t(K)$ is monotonically nondecreasing
    as $t \rightarrow 0$, and
    $\lim_{t \rightarrow 0} \widehat{\mu}_t(K) \le \mu_\infty(K)$. To show
    equality, suppose first that $\mu_\infty(K) < \infty$.
    Given $t$, $\delta$ and $\epsilon$, let
    $\{W_l\}$ be a finite open cover of $K$ with
\begin{equation} \label{4.22}
  \frac{\pi}{4} \sum_l \left( \diam_{\widehat{d}_t} W_l \right)^2 \le
  H^2_{\widehat{d}_t, \delta}(K)
  + \epsilon,
\end{equation}
and $\diam_{\widehat{d}_t} W_l < \delta$ for each $l$.
Now
\begin{equation} \label{4.23}
\frac{\pi}{4} \sum_l \left( \diam_{d_\infty} W_l \right)^2 \le
\frac{\pi}{4}
\sum_l \left( e^{\beta_k t} \diam_{\widehat{d}_t} W_l + C_k \sqrt{t} \right)^2
\end{equation}
and $\diam_{d_\infty} W_l <
e^{\beta_k t} \delta + C_k \sqrt{t}$ for each $l$.
Since $\{W_l\}$ is finite, if $t$ is small enough then
\begin{equation} \label{4.24}
  \frac{\pi}{4} \sum_l \left( e^{\beta_k t}
  \diam_{\widehat{d}_t} W_l + C_k \sqrt{t} \right)^2 \le
  \frac{\pi}{4} \sum_l \left( \diam_{\widehat{d}_t} W_l \right)^2 + \epsilon.
\end{equation}
Put $\delta^\prime = e^{\beta_k t} \delta + C_k \sqrt{t}$. Then
\begin{equation} \label{4.25}
  H^2_{d_\infty, \delta^\prime}(K) \le
  H^2_{\widehat{d}_t, \delta}(K) + 2 \epsilon \le
  \widehat{\mu}_t(K) + 2 \epsilon \le
  \lim_{t^\prime \rightarrow 0} \widehat{\mu}_{t^\prime}(K) + 2 \epsilon.
\end{equation}
As $\epsilon$ is arbitrary, this shows that
$H^2_{d_\infty, \delta^\prime}(K) \le 
\lim_{t^\prime \rightarrow 0} \widehat{\mu}_{t^\prime}(K)$.
A similar argument shows that if $\mu_\infty(K) = \infty$
then $\lim_{t^\prime \rightarrow 0} \widehat{\mu}_{t^\prime}(K) = \infty$.
Hence
$\mu_\infty \le \lim_{t^\prime \rightarrow 0} \widehat{\mu}_{t^\prime}$. \\ \\
(3).
Given $p \in X_\infty$, let $d_p \in C(X_\infty)$
be the distance function
from $p$. Given $x \in X_\infty$, choose
$k \in \Z^+$ so that $x \in B(p_\infty, k/2)$. Let $U
\subset B(p_\infty, k/2)$ be a ball neighborhood of
$x$ on which the potential function $\phi_U(0) \in C(U)$ is defined.

  Using the comparison maps in (\ref{4.7}), we can assume that
  each Ricci flow $g_i(\cdot)$ is defined on
  $B(p_\infty, k) \times (0, S_k)$. As $\lim_{i \rightarrow \infty}
  J_i = J_\infty$ smoothly (say relative to $g_\infty(t^\prime)$
  for a given $t^\prime \in (0, S_k)$),
  there is a sequence of holomorphic maps
  $\mu_{i} : (U, J_\infty) \rightarrow
  (B(p_\infty, k), J_i)$, for large $i$, with $\{\mu_{i}\}_{i=1}^\infty$
    smoothly approaching the identity map
  \cite{Hamilton (1977)}. The 
  pullback Ricci flows $\{\mu_i^* g_i(\cdot)\}_{i=1}^\infty$ live on
  $U$ and are all K\"ahler relative to the
  fixed complex structure $J_\infty$.

  Let $\{p_i\}_{i=1}^\infty$ be a sequence of points, with $p_i \in M_i$, that
  converges to $p$ in the Gromov-Hausdorff sense. We first 
  show that $\lim_{i \rightarrow \infty} \mu_i^* d_{p_i} = d_p$
  uniformly on $U$.
  To see this, we apply (\ref{4.6}) with $t_1 = 0$ and $t_2 = t$
  to get that
  for all $q \in U$, we have
  \begin{equation} \label{newdist}
    e^{-\beta_k t} d_{g_i(t)}(q, \mu_i(q)) \le
    d_i(q, \mu_i(q)) \le
    d_{g_i(t)}(q, \mu_i(q)) + C_k \sqrt{t}.
  \end{equation}
  For fixed $t$, we have
  $\lim_{i \rightarrow \infty} d_{g_i(t)}(q, \mu_i(q)) = 0$
  uniformly in $q$. Taking $t$ to zero, we conclude
  from (\ref{newdist}) that
  $\lim_{i \rightarrow \infty} d_i(q, \mu_i(q)) = 0$ uniformly in $q$.
  Now
  \begin{equation} \label{triangle}
  |(\mu_i^*d_{p_i})(q) - d_p(q)| = |d_i(p_i, \mu_i(q)) - d_\infty(p, q)| \le
  |d_i(p_i, q)) - d_\infty(p, q)| +
  |d_i(q, \mu_i(q))|.
  \end{equation}
  Using the Gromov-Hausdorff convergence of $d_i$ to $d_\infty$,
  relative to the identity comparison map, equation (\ref{triangle}) gives
  that $\lim_{i \rightarrow \infty} \mu_i^* d_{p_i} = d_p$
  uniformly on $U$.

  We will show that there are local K\"ahler potentials
  $\{\eta_i\}$ on $M_i$ so that $\lim_{i \rightarrow \infty}
  \mu_i^* \eta_i = \phi_U(0)$ uniformly on $U$.
  Pulling back by $\mu_i$, it suffices to construct such
  K\"ahler potentials for the pullback metrics on $U$,
  which we again denote by $g_i$, that are compatible with
  $J_\infty$.

  Construct $u_i(\cdot)$ as in the proof of part (2)
  of the proposition, except
  for the flow $g_i(\cdot)$ instead of $g_\infty(\cdot)$. From (\ref{4.10}), we
  have
  \begin{equation}
    u_i(0) - u(0) = - \int_0^{t^\prime} \log
    \frac{\omega^n(s)}{\omega_i^n(s)} \: ds.
    \end{equation}
  Then
    \begin{equation}
    \|u_i(0) - u(0)\|_{C(U)} \le \int_0^{t^\prime} \left\| \log
    \frac{\omega^n(s)}{\omega_i^n(s)} \right\|_{C(U)} \: ds.
    \end{equation}
    Using (\ref{4.13}) and dominated convergence, it follows that
    $\lim_{i \rightarrow \infty} u_i(0) = u(0)$ uniformly on $U$.

    Recall the functions $v_U$ and $w_U$ constructed in part (2), using the
    $\partial \overline{\partial}$-lemma. Construct functions
    $v_i$ and $w_i$ analogously for the metric $g_i$.
From the smooth convergence
of $\{g_i(t^\prime)\}_{i=1}^\infty$
to $g_\infty(t^\prime)$, and the explicit proof of the
$\partial \overline{\partial}$-lemma
\cite[Lemma I.(3.29) and Proposition III.(1.19)]{Demailly (2012)},
we can assume that
$\{v_{i}\}_{i=1}^\infty$ converges smoothly to $v_{\infty}$, and
$\{w_{i}\}_{i=1}^\infty$ converges smoothly to $w_{\infty}$.
Put
\begin{equation} \label{5.15}
\phi_{i}(0) = v_{i} + t^\prime w_{i} + u_i(0).
\end{equation}
By construction, $\phi_i(0)$ is a K\"ahler potential for $\omega_i$
on $U$ or,
more precisely, for $\mu_i^* \omega_i$.
We have shown that $\lim_{i \rightarrow \infty} \phi_i(0) = \phi_U(0)$ 
uniformly on $U$. Finally, for large $i$, put
$\eta_i = (\mu_i^{-1})^* \phi_i(0)$. Then $\eta_i$ is a 
smooth local K\"ahler potential for $g_i$ on $\mu_i(U)$.

We momentarily exclude the case when $K > 0$ and
$\diam(X_\infty, d_\infty) = \frac{\pi}{\sqrt{2K}}$.
We know that $\eta_i - d_{K, p_i}^2/2$ is plurisubharmonic.
As
\begin{equation}
\lim_{i \rightarrow \infty} \mu_i^*
\left( \eta_i - d_{K, p_i}^2/2 \right) =
\phi_U(0) - d_{K, p}^2/2
\end{equation}
uniformly on $U$,
it follows that
$\phi_U(0)  - d_{K, p}^2/2$ is plurisubharmonic on $U$.

If $K > 0$ and $\diam(X_\infty, d_\infty) = \frac{\pi}{\sqrt{2K}}$ then we
use the fact that $BK \ge \lambda^2 K$ for $\lambda \in (0,1)$, and
$\diam(X_\infty, d_\infty) < \frac{\pi}{\lambda \sqrt{2K}}$, so
$\phi_U(0)  - d_{\lambda^2 K, p}^2/2$ is plurisubharmonic on $U$.
We
take the limit as $\lambda \rightarrow 1$, as in the proof of
Proposition \ref{3.6}, to again conclude that
$\phi_U(0)  - d_{K, p}^2/2$ is plurisubharmonic on $U$.

Given the holomorphic disk $\Sigma \in X_\infty$.
we know that the restriction of
$\phi_U(0) - d_{K, p}^2/2$ to $\Sigma \cap U$ is subharmonic.
Hence
\begin{equation} \label{4.27}
\sqrt{-1} \partial \overline{\partial} d_{K, p}^2 \Big|_{\Sigma \cap U}/2 \le
\sqrt{-1} \partial \overline{\partial} \phi_U(0) \Big|_{\Sigma \cap U} =
     {\mu_\infty}\Big|_{\Sigma \cap U}.
\end{equation}
Then
\begin{equation} \label{4.28}
\sqrt{-1} \partial \overline{\partial} d_{K, p}^2 \Big|_{\Sigma}/2 \le
     {\mu_\infty}
\end{equation}
globally, as measures on $\Sigma$.

Given $\epsilon \in \left(0, \frac{1}{10} \right)$,
define $f_\epsilon : {D^2} \rightarrow \R$ by
\begin{equation} \label{4.29}
  f_\epsilon \left( re^{i \theta} \right) =
  \begin{cases}
    \log(\epsilon) + \epsilon &
    \mbox{ if } 0 \le r \le \epsilon, \\
        \log(r) + \epsilon &
        \mbox{ if } \epsilon \le r \le e^{- \epsilon}, \\
        0 &
    \mbox{ if } e^{- \epsilon} \le r < 1.
  \end{cases}
\end{equation}
Then $\log(|z|) \le f_\epsilon(z) \le 0$, and
$\sqrt{-1} \partial \overline{\partial} f_\epsilon$ exists as a measure.
We have
\begin{align} \label{4.30}
  \int_{\Sigma} (\sqrt{-1} \partial \overline{\partial} f_\epsilon)
  d_{K,p}^2 = & \:
\frac12 \int_0^{2 \pi} \int_0^1 (\partial_r (r \partial_r f_\epsilon)) \:
d_{K,p}^2(r,\theta) \: dr \: d\theta \\
= & \: \frac12 \int_0^{2 \pi} \int_0^1
\left( \delta_{\epsilon}(r) - \delta_{e^{- \epsilon}}(r) \right)
d_{K,p}^2(r,\theta) \: dr \: d\theta \notag \\
= & \: \frac12 \int_0^{2 \pi} 
\left( d_{K,p}^2(\epsilon,\theta) -
d_{K,p}^2 \left(e^{- \epsilon},\theta \right) \right)
d\theta. \notag
\end{align}

Let $\widehat{f}_\epsilon \in C^\infty_c(D^2)$ be a smooth
nonpositive approximation to
$f_\epsilon$, obtained by rounding out the corners at $r = \epsilon$ and 
$r = e^{- \epsilon}$. Since $\widehat{f}_\epsilon$ is nonpositive,
equation (\ref{4.28}) gives
\begin{equation} \label{4.31}
  \frac12 \int_{\Sigma} \widehat{f}_\epsilon \cdot
  \sqrt{-1} \partial \overline{\partial} d^2_{K,p} \ge
  \int_{\Sigma} \widehat{f}_\epsilon d\mu_\infty.
\end{equation}
Passing to a limit as $\widehat{f}_\epsilon$ approaches
${f}_\epsilon$, it follows from (\ref{4.30}) that
\begin{equation} \label{4.32}
\frac14 \int_0^{2 \pi} 
\left( d_{K,p}^2(\epsilon,\theta) -
d_{K,p}^2 \left(e^{- \epsilon},\theta \right) \right)
d\theta \ge
\int_{\Sigma} {f}_\epsilon d\mu_\infty \ge
\int_{\Sigma} \log|z| \: d\mu_\infty.
\end{equation}
Taking the limit as $\epsilon \rightarrow 0$ gives
\begin{equation} \label{4.33}
\frac{\pi}{2} d_{K,p}^2(0)
-  \frac14 \int_0^{2 \pi} 
d_{K,p}^2 \left( e^{i \theta} \right)
d\theta \ge
\int_{\Sigma} \log|z| \: d\mu_\infty,
\end{equation}
or
\begin{equation} \label{4.34}
d_{K,p}^2(0) \ge 
\frac{1}{2\pi} \int_0^{2 \pi} 
d_{K,p}^2 \left( e^{i \theta} \right)
d\theta +
\frac{2}{\pi} \int_{\Sigma} \log|z| \: d\mu_\infty.
\end{equation}
This proves the proposition.
\end{proof}

\begin{remark} \label{4.35}
  In the collapsing case, i.e. if $\lim_{i \rightarrow \infty}
  \vol(B(p_i, 1)) = 0$, there is no direct analog of Proposition
  \ref{4.1} since the limit space need not be K\"ahler, even if it is
  smooth.  For example, a sequence of flat $2$-tori can converge in the
  Gromov-Hausdorff sense to a circle. 

  If there are uniform two-sided sectional curvature bounds then one
  can take a limit in the sense of \'etale groupoids
  \cite[Section 5]{Lott (2007)}, even in the collapsing case.
  The conclusion is that there is a $W^{2,p}$-regular K\"ahler metric on
  the unit space of the groupoid, with $BK \ge K$.

  Natural examples in which there is collapsing with a K\"ahler limit space
  arise in the long-time behavior of the K\"ahler-Ricci flow.
\end{remark}

As a consequence of Proposition \ref{4.1}, we see that if a noncollapsed
pointed Gromov-Hausdorff limit of a sequence of
K\"ahler manifolds happens to be a smooth Riemannian manifold,
and if the K\"ahler manifolds in the sequence have $BK \ge K$, then the
limit is a K\"ahler manifold with $BK \ge K$.

\begin{corollary} \label{4.36}
  Let $\{ (M_i, p_i, g_i) \}_{i=1}^\infty$ be a sequence of pointed
  $n$-dimensional complete K\"ahler manifolds with $BK \ge K$, that
  converges in the pointed Gromov-Hausdorff topology to a
  smooth pointed $n$-dimensional
  Riemannian manifold $(M_\infty, p_\infty, g_\infty)$. Then
  $(M_\infty, g_\infty)$ is a K\"ahler manifold with $BK \ge K$.
  \end{corollary}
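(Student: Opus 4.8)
The plan is to assemble the statement from Proposition~\ref{4.1}, Theorem~\ref{1.2} and Proposition~\ref{3.31}. First pass to a subsequence so that Proposition~\ref{4.1} applies, producing a limit $(X_\infty,d_\infty)$ equipped with a complex structure $J_\infty$, an open cover $\{U_\alpha\}$, and local plurisubharmonic potentials $\phi_\alpha$. Pointed Gromov--Hausdorff limits are unique up to isometry, so $(X_\infty,d_\infty)$ is isometric to $(M_\infty,g_\infty)$; I transport $J_\infty$, the cover $\{U_\alpha\}$ and the $\phi_\alpha$ along this isometry and identify $d_\infty$ with the distance function of the \emph{smooth} metric $g_\infty$. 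Since $d_\infty$ is now smooth Riemannian, for every embedded holomorphic disk $\Sigma$ the two--dimensional Hausdorff measure $\mu_\infty$ on $\Sigma$ is exactly the Riemannian area form of $g_\infty\big|_\Sigma$; thus $\omega_\infty\big|_\Sigma=\sqrt{-1}\,\partial\overline\partial\phi_\alpha\big|_{\Sigma\cap U_\alpha}$ is smooth and equals that area form.

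The heart of the proof is to show that $g_\infty$ is Hermitian with respect to $J_\infty$, and K\"ahler. I would argue using the Ricci flows from the proof of Proposition~\ref{4.1}: on each ball $B(p_\infty,k)$ there is a flow $g_\infty(t)$, $t\in(0,S_k)$, each slice of which is $J_\infty$-K\"ahler, with $|\Rm(g_\infty(t))|\le\alpha_k/t$ and with $\widehat d_t=e^{-\beta_k t}d_{g_\infty(t)}$ increasing to $d_\infty$ as $t\to0$; by Dini's theorem this convergence of distance functions is locally uniform. On a fixed small ball $U$ the limit $g_\infty$ is a smooth metric of bounded geometry, so $g_\infty(\cdot)$ is a Ricci flow emanating from smooth bounded--curvature data; by local uniqueness and smoothing for such flows (pseudolocality together with bootstrapping, in the spirit of the Ricci-flow-from-rough-spaces results of Simon--Topping and Lee--Tam) one gets $g_\infty(t)\to g_\infty$ in $C^\infty_{\mathrm{loc}}$ as $t\to0$. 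Each $g_\infty(t)$ satisfies $g_\infty(t)(J_\infty\cdot,J_\infty\cdot)=g_\infty(t)(\cdot,\cdot)$ and $d\omega_{g_\infty(t)}=0$, and both properties pass to the limit, so $(M_\infty,g_\infty,J_\infty)$ is a K\"ahler manifold.

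It remains to verify $BK\ge K$. Proposition~\ref{4.1}(3) gives inequality~(\ref{4.2}) for every embedded holomorphic disk $\Sigma$, and under the identification above $\mu_\infty$ is the ordinary Riemannian area form, so~(\ref{4.2}) is exactly~(\ref{1.3}). Hence $g_\infty$ satisfies the characterizing property of Theorem~\ref{1.2}, i.e. $BK\ge K$. (Proposition~\ref{3.31} can be used to shorten the bookkeeping: once $g_\infty$ is known to be Hermitian with respect to $J_\infty$, the validity of~(\ref{1.3}) already forces it to be K\"ahler, so only the lower bound step needs Theorem~\ref{1.2}.) Either way, the one genuinely nontrivial point — and the step I expect to be the main obstacle — is upgrading the Gromov--Hausdorff convergence to enough regularity, namely smooth convergence of the approximating Ricci flows to $g_\infty$, or at least $C^0$-convergence of the rescaled metrics, in order to conclude that the smooth limit metric is compatible with $J_\infty$; once that is in hand, everything else is formal manipulation of results already established.
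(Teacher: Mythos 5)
Your overall architecture matches the paper's: the corollary is meant to follow from Proposition \ref{4.1} (complex structure, local potentials, identification of $\sqrt{-1}\partial\overline{\partial}\phi_\alpha$ on holomorphic disks with the two dimensional Hausdorff measure, and inequality (\ref{4.2})) combined with the characterization of $BK \ge K$ in Proposition \ref{3.15}; indeed the paper's proof is exactly that two-line citation, with no assertion about convergence of the Ricci flow at $t=0$. The problem is the step you yourself single out as the heart of your argument: the claim that $g_\infty(t) \to g_\infty$ in $C^\infty_{\mathrm{loc}}$ as $t \to 0$, justified by ``local uniqueness and smoothing \dots pseudolocality together with bootstrapping.'' As stated this is a genuine gap. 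The limiting flow is not known to emanate from smooth bounded-curvature data in any classical sense: what the construction provides is only $|\Rm(g_\infty(t))| \le \alpha_k/t$, a lower Ricci bound, and locally uniform convergence of the \emph{distance functions} $d_{g_\infty(t)} \to d_\infty$. Uniform convergence of distances does not yield $C^0$ (let alone $C^\infty$) convergence of the metric tensors, and the classical uniqueness theorems you implicitly invoke (Hamilton, Chen--Zhu) require the initial condition to be attained as tensors; upgrading distance convergence to tensor convergence toward a smooth limit is precisely the delicate problem of Ricci flows attaining rough initial data, and a pseudolocality argument would have to be run on the approximating flows $g_i(t)$, whose initial metrics carry no upper curvature bounds and are only Gromov--Hausdorff close to $g_\infty$. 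Making that work (almost-Euclidean volume or isoperimetric versions of pseudolocality via volume convergence, then bootstrapping to all derivatives at $t=0$, and in the same gauge in which $J_\infty$ was produced) is a substantial argument, not a citation.

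The paper sidesteps this entirely: the K\"ahler data on the limit is read off from the potentials $\phi_\alpha$ of Proposition \ref{4.1}(2), whose $\sqrt{-1}\partial\overline{\partial}$ on each holomorphic disk equals the two dimensional Hausdorff measure of $d_\infty$ --- which, when $d_\infty$ is the distance of the smooth metric $g_\infty$, is just the Riemannian area --- and the curvature bound then comes from (\ref{4.2}) together with Proposition \ref{3.15}; your parenthetical appeal to Proposition \ref{3.31} is much closer to this spirit than the flow-regularity route. So either prove the smooth (or at least $C^0$ tensor) convergence of $g_\infty(t)$ to $g_\infty$, or replace that step by the potential-theoretic identification. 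A minor further point: Proposition \ref{4.1} requires the noncollapsing hypothesis $\vol(B(p_i,1)) \ge v_0$, which the corollary does not assume; you should note that it holds because the limit is a manifold of the same dimension as the $M_i$, whereas a collapsed limit would have strictly smaller dimension.
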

\begin{proof}
  This follows from Propositions \ref{3.15} and \ref{4.1}.
  \end{proof}

As an example of what the limits in Proposition \ref{4.1} look like,
consider the case of two real dimensions.
A smooth oriented surface with a Riemannian metric is also a K\"ahler
manifold.
A lower bound on the sectional curvature is equivalent to
a lower bound on the
holomorphic bisectional curvature. Hence one would expect that
oriented surfaces with lower curvature bounds, in the Alexandrov sense,
could also be limits in the sense of Proposition \ref{4.1}.

\begin{proposition} \label{4.37}
  Let $(X,d)$ be a compact two dimensional length space, with Alexandrov
  curvature bounded below by $2K$. It follows that $X$ is a topological
  manifold; assume that it is oriented. Then $X$ satisfies the conclusions
  of Proposition \ref{4.1}.
  \end{proposition}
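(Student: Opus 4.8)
The plan is to realize $(X,d)$ as a noncollapsed pointed Gromov--Hausdorff limit of smooth compact K\"ahler $1$-manifolds with $BK \ge K$, and then to invoke Proposition \ref{4.1} together with the uniqueness of Gromov--Hausdorff limits. Recall that a compact two dimensional Alexandrov space of curvature $\ge 2K$ is a topological surface; this is classical and is the first assertion of the proposition. Fix a basepoint $p \in X$. In complex dimension one the holomorphic sectional curvature of an oriented Riemannian surface is its Gaussian curvature, so by the dimension-one convention of Section \ref{sect3} a compact oriented surface with Gaussian curvature $\ge 2K$ is exactly a compact K\"ahler manifold with $BK \ge K$. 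Thus it is enough to produce smooth metrics $g_i$ on the oriented surface $X$, with Gaussian curvature $\ge 2K$, such that $(X,g_i) \to (X,d)$ in the Gromov--Hausdorff topology.

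To get such a sequence I would run the two dimensional Ricci flow out of the (singular) initial metric $d$, using local Ricci flow techniques of the kind already used in Section \ref{sect4}: there is a flow $g(t)$, smooth for $t > 0$, with $g(t) \to d$ uniformly as $t \to 0^+$. Because in real dimension two $\partial_t R = \triangle R + R^2$, the maximum principle shows $\min_X R(g(t))$ is nondecreasing in $t$, so the initial lower bound $R \ge 4K$ (i.e.\ Gaussian curvature $\ge 2K$) persists for all $t > 0$; taking $g_i = g(t_i)$ with $t_i \downarrow 0$ gives the claim. For $K \ge 0$ one can instead argue by hand: in an isothermal chart write $d$ as $e^{2u}|dz|^2$ with $-\triangle_0 u = \omega \ge 2K e^{2u}\, dx\, dy$ as measures, mollify to $u_\epsilon = u * \rho_\epsilon$, and use Jensen's inequality $e^{2u_\epsilon} \le e^{2u} * \rho_\epsilon$ to get $-\triangle_0 u_\epsilon = \omega * \rho_\epsilon \ge 2K(e^{2u}*\rho_\epsilon) \ge 2K e^{2u_\epsilon}$, so $e^{2u_\epsilon}|dz|^2$ has Gaussian curvature $\ge 2K$.

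It remains to verify the noncollapsing hypothesis of Proposition \ref{4.1}. Since $X$ is a topological $2$-manifold that is an Alexandrov space, it has Hausdorff dimension $2$ and $\mathcal H^2(B_d(p,1)) > 0$; as the convergence $(X,g_i) \to (X,d)$ takes place among spaces of Alexandrov curvature $\ge 2K$ of dimension $\le 2$ without a drop of dimension, volume is continuous along it, so $v_0 := \inf_i \vol(B_{g_i}(p,1)) > 0$. Applying Proposition \ref{4.1} to $\{(X,p,g_i)\}_{i=1}^\infty$ produces a subsequence whose pointed Gromov--Hausdorff limit satisfies conclusions (1)--(3) of that proposition; but the full sequence, hence the subsequence, converges to $(X,p,d)$, and Gromov--Hausdorff limits are unique up to isometry, so this limit is $(X,p,d)$. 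Hence $(X,d)$ satisfies the conclusions of Proposition \ref{4.1}.

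The main obstacle is the smoothing step for general $K$: one needs a genuinely smooth deformation of the Alexandrov metric that keeps curvature $\ge 2K$ and returns to $d$. The scalar-curvature maximum principle makes the two dimensional Ricci flow accomplish this automatically, but justifying it requires the short-time existence of the flow emanating from an arbitrary two dimensional Alexandrov surface together with its $t \to 0$ convergence — this is where the real work (or the appeal to the literature) lies.
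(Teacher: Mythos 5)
Your proposal follows essentially the same route as the paper: smooth $(X,d)$ by the two dimensional Ricci flow, keep the curvature lower bound $2K$ for $t>0$, and feed the time slices into Proposition \ref{4.1} (the paper phrases this as ``the proof of Proposition \ref{4.1} applies,'' your black-box application with the noncollapsing check and uniqueness of the limit is an equivalent packaging). The step you flag as the main obstacle is exactly what the paper cites: $X$ carries a conformal structure by \cite{Reshetnyak (1993)}, and \cite{Richard (2018)} gives a smooth Ricci flow on $X \times (0,T]$ with sectional curvature bounded below by $2K$ and $\lim_{t \rightarrow 0}(X,g(t)) = (X,d)$ in the Gromov--Hausdorff topology -- note that the local Ricci flow results used in Section \ref{sect4} assume smooth initial metrics and would not by themselves produce a flow emanating from the singular metric $d$.
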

\begin{proof}
  One knows that $X$ acquires a conformal structure
  \cite[Theorem 7.1.2]{Reshetnyak (1993)}.
  From \cite{Richard (2018)}, there is a smooth Ricci flow $g(\cdot)$
  on $X \times (0, T]$, preserving the conformal structure,
so that the sectional curvature of $g(t)$ is
bounded below by $2K$, and $\lim_{t \rightarrow 0}
(X, g(t)) = (X, d)$ in the Gromov-Hausdorff topology.
Hence the proof of Proposition \ref{4.1} applies.
\end{proof}

\begin{remark}
  The examples in Proposition \ref{4.37} show the sharpness of the
  regularity estimates in Proposition \ref{4.1}. Consider a conical
  metric on $\R^2$ given by $ds^2 = r^{- 2 \alpha} (dr^2 + r^2 d\theta^2)$,
  with $\alpha \in (0,1)$. A K\"ahler potential is
    $\phi = \const r^{2 - 2 \alpha}$, which is only H\"older-continuous with
    respect to the standard metric on $\R^2$. On the other hand,
    the distance function from the origin is
    $d_0 = \const r^{1 - \alpha}$, so $\phi$ is Lipschitz-regular
    with respect to $d$.
  \end{remark}

\section{Singular spaces with lower bounds on holomorphic bisectional
curvature} \label{sect5}

In Section \ref{sect4} the underlying topological spaces were manifolds,
both in the noncollapsing sequences and in the limit spaces.   In analogy
with Alexandrov geometry, it is natural to ask if there is a notion
for singular spaces of
a lower bound on the holomorphic bisectional curvature.

\subsection{Metric K\"ahler spaces}

In the proof of Proposition \ref{4.1}, an important role was played by
local K\"ahler potentials.  This fits well with the notion of
K\"ahler spaces, which are defined using local potentials on 
possibly singular complex spaces.   

Let $X$ be a reduced complex space of pure dimension $n$
\cite[Chapter 2.5]{Demailly (2012)}.
For each $x \in X$, there is
a neighborhood $U_x$ of $x$ and an embedding $e_x : U_x
\rightarrow \C^{N_x}$ so that
$e(U_x)$ is the zero set of a
finite number of analytic functions defined on
an open set $V_x \subset \C^{N_x}$.

If $X_1$ and $X_2$ are complex spaces then a map $F : X_1 \rightarrow X_2$
is holomorphic if for each $x \in X_1$,  there are such
$U_x$ and $U_{F(x)}$, with $F(U_x) \subset U_{F(x)}$, so that the composite map
$e_{F(x)} \circ F \Big|_{U_x} : U_x \rightarrow \C^{N_{F(x)}}$
equals $\widehat{F} \circ e_x$, where $\widehat{F} : V_x
\rightarrow \C^{N_{F(x)}}$ is holomorphic
\cite[Section 1.3]{Greuel-Lossen-Shustin (2007)}.

A function $\phi$ on $U_x$ is plurisubharmonic if it is the pullback under
$e_x$ of a plurisubharmonic function on $V_x \subset \C^{N_x}$. A
pluriharmonic function on $U_x$ is defined similarly. If $X$ is normal and
$\phi \in C(U_x)$ is plurisubharmonic on $U_x \cap X_{reg}$ then it is
plurisubharmonic on $U_x$ \cite{Fornaess-Narasimhan (1980)}. 

As in \cite{Eyssidieux-Guedj-Zeriahi (2009),Moishezon (1975)},
a (semi)-{\em K\"ahler space} consists of a complex space with a covering
$\{U_{j} \}_{j=1}^\infty$ by such open sets, along with
continuous plurisubharmonic
functions $\phi_j$ on $U_{j}$, so
$\phi_j - \phi_{j^\prime}$ is pluriharmonic on each
$U_{j} \cap U_{{j^\prime}} \neq \emptyset$. Two such collections
$\{ (U_j, \phi_j) \}$ and
$\{ (\widehat{U}_k, \widehat{\phi}_k) \}$ are equivalent if
$\phi_j - \widehat{\phi}_k$ is pluriharmonic on each
$U_j \cap \widehat{U}_k \neq \emptyset$. (In the papers
\cite{Eyssidieux-Guedj-Zeriahi (2009),Moishezon (1975)}
the functions $\phi_j$ are taken to be
smooth and strictly plurisubharmonic, but there is clearly some
flexibility in the definitions.)

We wish to define a metric K\"ahler space, meaning a K\"ahler space with a
metric $d$. Naturally, we want some compatiblity between the K\"ahler
space structure and the metric structure. If the K\"ahler potentials are
smooth then there is a corresponding Riemannian metric and one can require
that $d$ be the corresponding length metric.  If the K\"ahler potentials
are only continuous then it is not clear how to construct a length metric;
see, however, \cite[Theorem 1.3]{Li (2017)}.

An indication of a reasonable compatibility condition for us comes from
the use of $dA$ in (\ref{3.16}). In the smooth setting
$dA$ is both the restriction of the K\"ahler form
to a holomorphic disk, and its two dimensional Hausdorff measure.
Again in the smooth setting, the complex structure and the two
dimensional Hausdorff measure determine the K\"ahler form
and the Riemannian metric. Based on this, we make the following
definition.

\begin{definition} \label{5.1}
  A metric K\"ahler space is a K\"ahler space $X$ equipped with a metric
  $d$ that induces the topology of the complex space $X$,
  so that if $\Sigma$ is an embedded holomorphic disk then for all $j$,
  $\sqrt{-1} \partial \overline{\partial} \phi_j \Big|_{\Sigma}$
  equals the two dimensional
  Hausdorff measure on each $\Sigma \cap U_{j} \neq \emptyset$.
  \end{definition}

We now define a notion of ``$BK \ge K$'' for metric K\"ahler spaces,
which we put in quotes in order to distinguish it from the
condition $BK \ge K$ for smooth K\"ahler manifolds.

\begin{definition} \label{5.2}
  A metric K\"ahler space $X$ has ``$BK \ge K$'' if for every $p \in X$
  and every $j$,
  $\phi_j - d_{K,p}^2/2$ is plurisubharmonic on $U_j$.
  \end{definition}

If $S$ is a subset of $X$ and $d_S$ denotes the distance to $S$ then
we define $d_{K,S}$ in terms of $d_S$ as in (\ref{1.1}). The next lemma
will be used in Section \ref{sect6}.

\begin{lemma} \label{5.3}
  If $X$ has ``$BK \ge K$'' then for any $S \subset X$,
  the function $\phi_j - d_{K,S}^2/2$ is plurisubharmonic on $U_j$.
  \end{lemma}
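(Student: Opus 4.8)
The plan is to reduce the statement about the distance to a subset $S$ to the already-established statement about the distance to a single point, by exploiting the fact that plurisubharmonicity is preserved under taking suprema followed by an upper-semicontinuous regularization. First I would observe the elementary pointwise identity $d_S = \inf_{p \in S} d_p$, and hence, since the function $F_K$ from \eqref{1.1} is nondecreasing, $d_{K,S}^2 = \inf_{p \in S} d_{K,p}^2$. Consequently
\begin{equation} \label{5.3a}
  \phi_j - d_{K,S}^2/2 = \phi_j - \frac12 \inf_{p \in S} d_{K,p}^2 =
  \sup_{p \in S} \left( \phi_j - d_{K,p}^2/2 \right)
\end{equation}
on $U_j$. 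By Definition \ref{5.2}, each function $\phi_j - d_{K,p}^2/2$ in the family on the right-hand side is plurisubharmonic on $U_j$.

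Next I would invoke the standard fact that an upper-semicontinuous function which is locally the supremum of a family of plurisubharmonic functions is plurisubharmonic (after upper-regularization if necessary); on a complex space this is applied via the embedding $e_x : U_x \to \C^{N_x}$, using that a supremum of functions each of which is the pullback of a plurisubharmonic function on $V_x$ is again such a pullback. Here the key point to check is that no regularization is actually needed: the left-hand side of \eqref{5.3a} is already continuous, because $d_S$ is $1$-Lipschitz with respect to $d$ (being an infimum of the $1$-Lipschitz functions $d_p$), $F_K$ is continuous, and $\phi_j$ is continuous; so $\phi_j - d_{K,S}^2/2 \in C(U_j)$ and equals its own upper-semicontinuous regularization. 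Thus the supremum in \eqref{5.3a} is a bona fide plurisubharmonic function, which is exactly the assertion of the lemma.

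The main obstacle I anticipate is purely a matter of bookkeeping on the singular complex space: one must make sure that ``plurisubharmonic on $U_j$'' in Definition \ref{5.2} — i.e.\ the pullback under $e_{x}$ of a plurisubharmonic function on the ambient open set $V_x \subset \C^{N_x}$ — behaves well under the supremum operation. A clean way to handle this is to note that a function on $U_x$ is plurisubharmonic in this sense if and only if its restriction to every embedded holomorphic disk in $U_x$ is subharmonic (and it is continuous); on holomorphic disks the claim is the classical one-variable fact that a continuous function which is a supremum of subharmonic functions is subharmonic. Since the disks involved are arbitrary, one then concludes plurisubharmonicity on $U_j$ directly, avoiding any delicate issue about whether the supremum descends from $V_x$. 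One should also remark, for the degenerate case $K>0$ with $\diam(X) = \pi/\sqrt{2K}$, that $d_{K,S}^2$ is still well defined and the above argument is unaffected, since \eqref{5.3a} is an algebraic identity valid pointwise.
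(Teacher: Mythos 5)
Your proposal is correct and follows essentially the same route as the paper: write $\phi_j - d_{K,S}^2/2 = \sup_{p \in S}\left( \phi_j - d_{K,p}^2/2 \right)$ and invoke the fact that a supremum of plurisubharmonic functions is plurisubharmonic once it is upper semicontinuous, which here follows from continuity of $\phi_j - d_{K,S}^2/2$. Your extra remarks on the singular-space bookkeeping (reduction to holomorphic disks, in the spirit of Fornaess--Narasimhan) and on the $K>0$ diameter-extremal case are harmless elaborations of the same argument.
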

\begin{proof}
  As $d_S = \inf_{p \in S} d_p$, it follows that
  $d_{K,S} = \inf_{p \in S} d_{K,p}$ and
  $\phi_j - d_{K,S}^2/2 = \sup_{p \in S} (\phi_j - d_{K,p}^2/2)$. Now the
  supremum of a family of plurisubharmonic functions, when upper
  semicontinuous, is also plurisubharmonic
  \cite[Chapter 1, Theorem 5.7]{Demailly (2012)}. As
  $\phi_j - d_{K,S}^2/2$ is continuous, it is hence plurisubharmonic.
\end{proof}

We now show the essential equivalence between 
``$BK \ge K$'' and (\ref{3.16}).

\begin{proposition} \label{5.4}
  If $X$ has ``$BK \ge K$'' then for all embedded
  holomorphic disks $\phi$ in $X$,
  equation (\ref{3.16}) holds. If $X$ is normal 
  then the converse is true.
\end{proposition}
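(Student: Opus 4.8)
The plan is to mimic the smooth proof of Proposition~\ref{3.15}, using the local potentials $\phi_j$ in place of a local K\"ahler potential $\phi$ for $\omega$. For the forward direction, suppose $X$ has ``$BK\ge K$'' and let $\Sigma$ be an embedded holomorphic disk with $i(0)=p$. Since $\phi_j-d_{K,p}^2/2$ is plurisubharmonic on $U_j$, its restriction to $\Sigma\cap U_j$ is subharmonic there, and by the cocycle condition the globally-defined $(1,1)$-current $\omega_X|_\Sigma=\sqrt{-1}\,\partial\overline\partial\phi_j|_{\Sigma\cap U_j}$ is well defined on all of $\Sigma$; by Definition~\ref{5.1} this current is exactly the two dimensional Hausdorff measure $dA$ on $\Sigma$. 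So we have $\sqrt{-1}\,\partial\overline\partial\, d_{K,p}^2/2\le \omega_X|_\Sigma = dA$ globally on $\Sigma$. The remaining step is then purely local potential theory on the disk: comparing $d_{K,p}^2$ against the solution $f$ of $\sqrt{-1}\,\partial\overline\partial f/2=dA$ with boundary data $f|_{\partial\Sigma}=d_{K,p}^2|_{\partial\Sigma}$, one has that $f-d_{K,p}^2$ is subharmonic and vanishes on $\partial\Sigma$, hence nonnegative at the center, and the explicit formula $f(0)=\frac{2}{\pi}\iint_\Sigma \log|z|\,dA+\frac{1}{2\pi}\int_{\partial\Sigma}d_{K,p}^2(\theta)\,d\theta$ via the Green's function of the disk gives (\ref{3.16}). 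This is the same computation already carried out around (\ref{3.17}); alternatively one can run the regularization argument with $f_\epsilon$ exactly as in (\ref{4.29})--(\ref{4.34}) to avoid any regularity worries about $dA$.

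For the converse, assume $X$ is normal and that (\ref{3.16}) holds for every embedded holomorphic disk. Fix $p\in X$ and an index $j$; we want $\phi_j-d_{K,p}^2/2$ to be plurisubharmonic on $U_j$. Since $X$ is normal and the function is continuous, by \cite{Fornaess-Narasimhan (1980)} (quoted just before the definition of a K\"ahler space) it suffices to verify plurisubharmonicity on the regular locus $U_j\cap X_{reg}$. On $X_{reg}$ the sub-mean-value property along complex lines can be checked on arbitrary small embedded holomorphic disks $\Sigma\subset U_j\cap X_{reg}$: I would show that $\phi_j-d_{K,p}^2/2$ restricted to such a $\Sigma$ is subharmonic. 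Writing $h$ for the harmonic function on $\Sigma$ agreeing with $\phi_j-d_{K,p}^2/2$ on $\partial\Sigma$, one uses that $\sqrt{-1}\,\partial\overline\partial\phi_j|_\Sigma=dA$ (Definition~\ref{5.1}) together with (\ref{3.16}) applied to $\Sigma$ and to its sub-disks (using the automorphism remark in Remark~\ref{3.30}, so that the inequality holds with $0$ replaced by any interior point) to conclude the sub-mean-value inequality at every point of $\Sigma$. Concretely, (\ref{3.16}) for a disk centered at $q\in\Sigma$ reads $d_{K,p}^2(q)\ge \frac{2}{\pi}\iint_\Sigma\log|z-q|\,dA+\frac{1}{2\pi}\int_{\partial\Sigma}d_{K,p}^2\,d\theta$, i.e. $d_{K,p}^2(q)\ge (\text{potential of }dA)(q)+(\text{harmonic extension of }d_{K,p}^2|_{\partial\Sigma})(q)$; since $\phi_j$ satisfies the same identity with equality (its Laplacian on $\Sigma$ being $dA$), subtracting gives $(\phi_j-d_{K,p}^2/2)(q)\le h(q)$, which is exactly subharmonicity. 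Hence $\phi_j-d_{K,p}^2/2$ is plurisubharmonic on $U_j\cap X_{reg}$, and by normality on all of $U_j$.

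The main obstacle I anticipate is not the potential-theoretic bookkeeping but the interaction between the purely complex-analytic definition of ``embedded holomorphic disk'' near $X_{sing}$ and the metric $d$: the converse argument only gets control on disks lying in $X_{reg}$, so one genuinely needs the Forn\ae ss--Narasimhan extension across $X_{sing}$, which is why the normality hypothesis appears — this is presumably exactly the point flagged by the ``If $X$ is normal then the converse is true'' phrasing. A secondary technical point is to make sure that the identity $\sqrt{-1}\,\partial\overline\partial\phi_j|_\Sigma = dA$ as \emph{measures} is used correctly when $\phi_j$ is merely continuous; here I would either invoke the regularization with $f_\epsilon$ from (\ref{4.29}) as in the proof of Proposition~\ref{4.1}, or note that for continuous plurisubharmonic $\phi_j$ the Monge--Amp\`ere-type restriction to a disk is a well-defined positive measure and Definition~\ref{5.1} is precisely the compatibility that makes it equal $dA$. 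With those two caveats handled, both directions reduce to the single-disk Green's-function computation already performed in the smooth case, so the proof is essentially a transcription of the arguments for Propositions~\ref{3.15} and~\ref{4.1} into the metric K\"ahler space language, using Lemma~\ref{5.3}'s underlying idea (supremum of plurisubharmonic functions) only implicitly here.
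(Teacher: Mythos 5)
Your proposal is correct and follows essentially the same route as the paper: restriction of the plurisubharmonic functions $\phi_j - d_{K,p}^2/2$ to the disk plus Definition \ref{5.1} and the regularization argument of Proposition \ref{4.1}(3) for the forward direction, and testing on disks in $U_j \cap X_{reg}$ followed by the Forn\ae ss--Narasimhan extension across $X_{sing}$ (using continuity and normality) for the converse, which you in fact spell out in more detail than the paper does. The only slip is cosmetic: when recentering (\ref{3.16}) at an interior point $q$ via Remark \ref{3.30}, the kernel $\log|z-q|$ and the unweighted boundary average should be replaced by the Green's function $\log\left|\tfrac{z-q}{1-\overline{q}z}\right|$ and the Poisson integral at $q$, which does not affect the harmonic-majorant argument.
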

\begin{proof}
  If $X$ has ``$BK \ge K$'' then by
  \cite[Theorem 5.3.1]{Fornaess-Narasimhan (1980)},
  $\phi_j - d_{K,p}^2/2$ is subharmonic on $U_j \cap \Sigma$. Hence
  $\sqrt{-1} \partial \overline{\partial} d_{K,p}\Big|_{\Sigma}^2/2 \le dA$
  globally on $\Sigma$.  As in the proof of
  Proposition \ref{4.1}(3), it follows
  that (\ref{3.16}) holds.

  Suppose that $X$ is normal and (\ref{3.16}) holds. Taking embedded holomorphic
  disks $\Sigma$ in $U_j \cap X_{reg}$, it follows that
  $\phi_j - d_{K,p}^2/2$ is plurisubharmonic on $U_j \cap X_{reg}$. As
  $\phi_j - d_{K,p}^2/2$ is continuous on $U_j$, it is then
  also plurisubharmonic
  on $U_j$.
  \end{proof}

We show that if a K\"ahler orbifold has $BK \ge K$, in the sense of curvature
tensors, then the underlying length space has ``$BK \ge K$''. 
For a summary of the relevant topology and geometry of orbifolds, we refer to
\cite[Section 2]{Kleiner-Lott (2014)}.

\begin{proposition} \label{5.5}
  If ${\mathcal O}$ is a smooth effective
  K\"ahler orbifold with $BK \ge K$, in terms of the curvature tensor on
  local coverings, then the underlying topological space $|{\mathcal O}|$
  with the length metric
  has ``$BK \ge K$''.
\end{proposition}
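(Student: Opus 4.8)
The plan is to reduce the orbifold statement to the smooth manifold case by working on local uniformizing charts. Let $x \in |{\mathcal O}|$ and let $(\widetilde{U}, G_x, \pi)$ be a local uniformizing chart around $x$, so that $\widetilde{U}$ is an open subset of $\C^n$ carrying a $G_x$-invariant smooth K\"ahler metric $\widetilde{g}$ with $BK \ge K$ in the sense of curvature tensors, and $|{\mathcal O}| \cap \pi(\widetilde{U}) = \widetilde{U}/G_x$ with the quotient length metric. The K\"ahler structure on $\widetilde{U}$ is described by a smooth strictly plurisubharmonic potential $\widetilde{\phi}$, and since $G_x$ acts by biholomorphic isometries, averaging over $G_x$ if necessary, we may take $\widetilde{\phi}$ to be $G_x$-invariant; it therefore descends to a continuous plurisubharmonic function $\phi$ on $U = \widetilde{U}/G_x$ in the sense of complex spaces (pull back under the embedding of $U/G_x$ into some $\C^N$ via a Hironaka-type embedding of the quotient; the pullback of $\phi$ to $\widetilde{U}$ is the $G_x$-invariant smooth potential). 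These potentials $\{\phi_j\}$, for a cover by such charts, differ by pluriharmonic functions on overlaps, so they endow $|{\mathcal O}|$ with the structure of a K\"ahler space.

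Next I would check that this is a \emph{metric} K\"ahler space in the sense of Definition \ref{5.1}. Since the orbifold Riemannian metric is smooth upstairs, on any embedded holomorphic disk $\Sigma \subset U$ that misses the singular locus the equality of $\sqrt{-1}\partial\overline{\partial}\phi_j\big|_\Sigma$ with the two-dimensional Hausdorff measure is the usual smooth fact; for a disk passing through singular points one pulls back to $\widetilde{U}$, where $\pi$ is a local isometry away from a lower-dimensional set and hence preserves two-dimensional Hausdorff measure, and uses that both sides are determined by their restriction to the regular part together with continuity (the potential $\phi_j$ is continuous, and the orbifold singular set has real codimension at least $2$, so it is polar in $\Sigma$ and carries no Hausdorff-$2$ mass unless $\Sigma$ is one-dimensional as an orbit-space chart, in which case one argues directly on $\widetilde\Sigma$).

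The heart of the proof is to verify ``$BK \ge K$'' in the sense of Definition \ref{5.2}, i.e. that $\phi_j - d_{K,p}^2/2$ is plurisubharmonic on $U_j$ for every $p \in |{\mathcal O}|$. Pull this function back to the uniformizing chart $\widetilde{U}$: the pullback of $\phi_j$ is the smooth potential $\widetilde\phi$, and the pullback of $d_p$ is $\widetilde{d}_{\widetilde{S}}$, the distance in $(\widetilde{U},\widetilde{g})$ to the finite orbit $\widetilde{S} = \pi^{-1}(p)$ (because the quotient length metric satisfies $d(\pi(\widetilde{q}),p) = \min_{\widetilde{s}\in \widetilde{S}} \widetilde{d}(\widetilde q, \widetilde s)$). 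Correspondingly the pullback of $d_{K,p}^2/2$ is $\widetilde{d}_{K,\widetilde{S}}^2/2$. Now I invoke the smooth theory: by Proposition \ref{3.6} applied to the manifold $(\widetilde{U},\widetilde{g})$ with $BK \ge K$, for each single point $\widetilde{s}$ the function $\widetilde\phi - \widetilde{d}_{K,\widetilde{s}}^2/2$ is plurisubharmonic; and then, exactly as in Lemma \ref{5.3}, $\widetilde\phi - \widetilde{d}_{K,\widetilde{S}}^2/2 = \sup_{\widetilde{s}\in\widetilde{S}}(\widetilde\phi - \widetilde{d}_{K,\widetilde{s}}^2/2)$ is an upper semicontinuous (indeed continuous) supremum of plurisubharmonic functions, hence plurisubharmonic on $\widetilde{U}$. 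A $G_x$-invariant plurisubharmonic function on $\widetilde U$ descends to a plurisubharmonic function on the complex space $U_j$ in the required sense. This gives the claim.

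The main obstacle I anticipate is not the curvature estimate itself — that is handled cleanly by Proposition \ref{3.6} plus the supremum trick of Lemma \ref{5.3} — but rather the bookkeeping needed to make Proposition \ref{3.6} applicable \emph{near the orbifold singular locus}: Proposition \ref{3.6} is stated for complete K\"ahler manifolds, whereas $\widetilde{U}$ is only a (possibly incomplete) open chart, and one must check that its proof, being entirely local (barrier construction at a point $m$ using a minimizing geodesic to a point $\widetilde s$ of $\widetilde S$, plus the $\partial\overline\partial$-characterization), goes through on an open set, perhaps after shrinking $\widetilde U$ so that the relevant minimizing geodesics stay inside it. One also must be slightly careful that the ``distance to a finite set of points'' has the right barrier behavior at points equidistant from two or more of the $\widetilde s$, but this is exactly the scenario already covered by the supremum argument, since the infimum of the $d_{\widetilde s}$ is handled by taking the sup of the corresponding plurisubharmonic functions and no geodesic regularity at the cut locus is needed for that step.
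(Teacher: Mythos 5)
Your construction of the local potentials by averaging over $G_x$ matches the paper's and is fine, but the central step has a genuine gap. In Definition \ref{5.2} the function $d_p$ is the distance in the \emph{global} length metric of $|{\mathcal O}|$, the basepoint $p$ is arbitrary, and $\phi_j - d_{K,p}^2/2$ must be plurisubharmonic on every chart $U_j$, including charts far from $p$. Your reduction identifies the pullback of $d_p$ to the uniformizing chart $\widetilde U$ with the distance in $(\widetilde U, \widetilde g)$ to the finite orbit $\pi^{-1}(p)$. That identification fails: if $p \notin U_j$ then $\pi^{-1}(p)$ is empty, and even when $p \in U_j$ a minimizing orbifold path from a point of $U_j$ to $p$ need not stay inside the chart, so $\min_{\widetilde s \in \widetilde S} \widetilde d(\cdot, \widetilde s)$ can be strictly larger than $d_p \circ \pi$. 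Shrinking $\widetilde U$ does not repair this, since shrinking changes the auxiliary chart distance while Definition \ref{5.2} concerns the fixed global $d_p$; for the same reason the incompleteness of $\widetilde U$ that you flag is not mere bookkeeping. As a result, Proposition \ref{3.6} and the supremum trick of Lemma \ref{5.3} are being applied to the wrong function, and the real difficulty is never confronted: the geodesics realizing $d_p$ may pass through singular strata of $|{\mathcal O}|$ far from your chart, where the Hessian/barrier comparison underlying Proposition \ref{3.6} is not available.

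The paper's proof addresses exactly these points. First, the regular set $|{\mathcal O}|_{reg}$ is convex: a minimizing geodesic between regular points avoids the singular set (citing \cite{Petrunin (1998)}), so for regular $p$ the barrier argument of Proposition \ref{3.6} can be run entirely inside the smooth (incomplete but convex) K\"ahler part with $BK \ge K$, giving plurisubharmonicity of $\phi - d_{K,p}^2/2$ on $U \cap |{\mathcal O}|_{reg}$. Second, $|{\mathcal O}|$ is a normal complex space \cite{Cartan (1957)}, so by \cite{Fornaess-Narasimhan (1980)} a continuous function plurisubharmonic on $U \cap |{\mathcal O}|_{reg}$ is plurisubharmonic on all of $U$; this is how one crosses the singular set, rather than by lifting to the chart. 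Third, an arbitrary (possibly singular) basepoint $p$ is handled by choosing regular points $p_i \rightarrow p$ and passing to the limit in the plurisubharmonicity of $\phi - d_{K,p_i}^2/2$. These three ingredients --- convexity of the regular set, extension across the singular set by normality, and approximation of singular basepoints --- are all absent from your proposal and are what a correct proof needs.
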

\begin{proof}
  Given $x \in |{\mathcal O}|$, let $G_x$ be its local group.
  There is a local model ${(\widehat{U}, G_x)}$ around $x$, where
  $\widehat{U}$ is an open subset of $\C^n$ containing $0$, and
  $G_x$ acts effectively
  by holomorphic isometries on $\widehat{U}$ while fixing $0$.
    Put $U = \widehat{U}/G_x$, a neighborhood of $x$, with projection
    $\pi : \widehat{U} \rightarrow U$. By shrinking $\widehat{U}$ if necessary,
    we can assume that there is a K\"ahler potential $\widehat{\phi}$ on it.
    Averaging $\widehat{\phi}$ over $G_x$, we can
    assume that it is $G_x$-invariant. Then there is a unique $\phi \in C(U)$ with
    $\pi^* \phi = \widehat{\phi}$. This gives $|{\mathcal O}|$ the structure
    of a K\"ahler space. With the natural length space structure
    on $|{\mathcal O}|$, it becomes a metric K\"ahler space.

    The regular subset $|{\mathcal O}|_{reg}$ consists of the points with
    trivial local group. It is convex in the sense that if
    $x_1, x_2 \in |{\mathcal O}|_{reg}$ then any minimizing geodesic in
    $|{\mathcal O}|$ from $x_1$ to $x_2$ lies in
    $|{\mathcal O}|_{reg}$, as follows for example from
    \cite[Corollary of Theorem 1.2(A)]{Petrunin (1998)}. Given $p \in |{\mathcal O}|_{reg}$ and a
    local potential $\phi$ defined on an open set $U$, the convexity and
    the fact that $BK \ge K$ on $|{\mathcal O}|_{reg}$ implies that
    $\phi - d_{K,p}^2/2$ is plurisubharmonic on $U \cap |{\mathcal O}|_{reg}$.
    Since $|{\mathcal O}|$ is a normal complex space
\cite{Cartan (1957)}, it follows that
    $\phi - d_{K,p}^2/2$ is plurisubharmonic on $U$.

For any $p \in |{\mathcal O}|$, we can find a sequence
$\{p_i\}$ in $|{\mathcal O}|_{reg}$ converging to $p$.
As each $\phi - d_{K,p_i}^2/2$ is plurisubharmonic on $U$,
we can pass to the
limit and deduce that
    $\phi - d_{K,p}^2/2$ is plurisubharmonic on $U$. Hence
    $|{\mathcal O}|$ has ``$BK \ge K$''.
  \end{proof}

\begin{remark} \label{5.6}
  Proposition \ref{5.5} shows that quotient singularities can occur as
  singularities of metric K\"ahler spaces with a lower bound on the
  holomorphic bisectional curvature.  We do not know what other
  singularities can occur.
  \end{remark}

\subsection{Complex Gromov-Hausdorff convergence}

We now give a notion of Gromov-Hausdorff convergence that is adapted
to metric K\"ahler spaces.  One's first inclination may be to require the
Gromov-Hausdorff approximants to be holomorphic.  However, requiring this
globally would be too restrictive.  Instead we consider Gromov-Hausdorff
approximants in the usual sense, which in turn can be locally approximated
by holomorphic maps.

\begin{definition} \label{5.7}
  A collection $\{(X_i, p_i, d_i)\}_{i=1}^\infty$ of pointed complete
  metric K\"ahler spaces
  converges to a pointed complete
  metric K\"ahler space $(X_\infty, p_\infty, d_\infty)$
  in the pointed complex Gromov-Hausdorff topology if
  for every $k \in \Z^+$, there is a covering of $B(p_\infty, k)$ by
  bounded open sets $\{U_{\infty, j}\}$ and associated
  plurisubharmonic functions $\{\phi_{\infty, j}\}$
  so that for every
  $\epsilon > 0$, if $i$ is sufficiently large then there are
  \begin{itemize}
  \item A pointed $\epsilon$-Gromov-Hausdorff approximation
    $h_i : B(p_\infty, k) \rightarrow B(p_i, k)$ and
  \item Holomorphic maps $r_{i,j} : U_{\infty, j} \rightarrow M_i$ that are
    $\epsilon$-close to $h_i$ on $U_{\infty, j} \cap B(p_\infty, k)$, so that
    $r_{i,j}(U_{\infty, j})$ is contained in a set $V_{i,j}$ with an
    associated plurisubharmonic function $\phi_{i,j}$, and
  \item $r_{i,j}^* \phi_{i,j}$ is uniformly $\epsilon$-close to
    $\phi_{\infty, j}$.
    \end{itemize}
    \end{definition}

Note that in Definition \ref{5.7},
the limit space can have lower dimension than
the approximants. In using Definition \ref{5.7}, we allow ourselves to
pass to equivalent choices of $\{(V_{i,j}, \phi_{i,j})\}$ on $M_i$.

We now show that the ``$BK \ge K$'' condition is preserved
under complex Gromov-Hausdorff limits.

\begin{proposition} \label{5.8}
  If $\lim_{i \rightarrow \infty} (X_i, p_i, d_i) = (X_\infty, p_\infty,
  d_\infty)$ in the pointed complex Gromov-Hausdorff topology, and each
  $(X_i, d_i)$ has ``$BK \ge K$'', then $(X_\infty, p_\infty)$ has
  ``$BK \ge K$''.
  \end{proposition}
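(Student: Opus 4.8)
The plan is to unwind Definitions \ref{5.2} and \ref{5.7} and then transport the defining inequality of ``$BK \ge K$'' from the approximants $X_i$ to the limit $X_\infty$ along the holomorphic comparison maps $r_{i,j}$, using uniform convergence of the potentials and of the relevant distance functions. First I would reduce to a purely local statement: since plurisubharmonicity is a local property, is unaffected by adding a pluriharmonic function, and since equivalent K\"ahler atlases on a complex space are compared by pluriharmonic functions on overlaps, in order to verify that $(X_\infty, p_\infty)$ has ``$BK \ge K$'' it suffices to show the following. Given $p, x \in X_\infty$, pick $k \in \Z^+$ with $p$ and $x$ in the interior of $B(p_\infty, k)$, and let $\{(U_{\infty,j}, \phi_{\infty,j})\}$ be the covering of $B(p_\infty, k)$ and associated potentials furnished by Definition \ref{5.7}; then, for the index $j$ with $x \in U_{\infty,j}$, the function $\phi_{\infty,j} - d_{K,p}^2/2$ is plurisubharmonic near $x$. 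I would momentarily set aside the borderline case $K > 0$, $\diam(X_\infty, d_\infty) = \frac{\pi}{\sqrt{2K}}$, to be recovered at the end by the $\lambda \to 1$ approximation used in the proof of Proposition \ref{3.6}.

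Fixing such $p, x, k$ and such a $j$, and writing $U = U_{\infty,j}$, $\phi_\infty = \phi_{\infty,j}$, I would run the approximation as follows. For $\epsilon > 0$ and $i$ large, Definition \ref{5.7} gives a pointed $\epsilon$-Gromov-Hausdorff approximation $h_i : B(p_\infty, k) \to B(p_i, k)$ and a holomorphic map $r_i := r_{i,j} : U \to M_i$ that is $\epsilon$-close to $h_i$ on $U \cap B(p_\infty, k)$, with image in a set $V_i := V_{i,j}$ carrying a plurisubharmonic potential $\phi_i := \phi_{i,j}$, and with $r_i^* \phi_i$ uniformly $\epsilon$-close to $\phi_\infty$. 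Put $q_i = h_i(p) \in X_i$. Because $(X_i, d_i)$ has ``$BK \ge K$'', Definition \ref{5.2} gives that $\phi_i - \frac12 F_K(d_i(q_i, \cdot))$ is plurisubharmonic on $V_i$, where $F_K$ denotes the function on the right-hand side of (\ref{1.1}). Since the pullback of a plurisubharmonic function under a holomorphic map of complex spaces is plurisubharmonic (immediate from the local-embedding definition), the function
\begin{equation} \label{eq:58pull}
  r_i^*\left( \phi_i - \tfrac12 F_K(d_i(q_i, \cdot)) \right) \; = \; r_i^* \phi_i \; - \; \tfrac12 F_K\!\left( d_i(q_i, r_i(\cdot)) \right)
\end{equation}
is plurisubharmonic on $U$. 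Now let $\epsilon \to 0$. On $U \cap B(p_\infty, k)$ we have $r_i^* \phi_i \to \phi_\infty$ uniformly, and
\begin{equation} \label{eq:58dist}
  \left| d_i(q_i, r_i(q)) - d_\infty(p, q) \right| \; \le \; d_i\!\left( r_i(q), h_i(q) \right) \; + \; \left| d_i\!\left( h_i(p), h_i(q) \right) - d_\infty(p, q) \right| \; \le \; 2\epsilon ,
\end{equation}
so $q \mapsto d_i(q_i, r_i(q))$ converges uniformly on $U \cap B(p_\infty, k)$ to $q \mapsto d_\infty(p, q) = d_p(q)$; as $F_K$ is continuous, hence uniformly continuous on the uniformly bounded range of distances that occur, $\frac12 F_K(d_i(q_i, r_i(\cdot))) \to d_{K,p}^2/2$ uniformly there. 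Thus the left-hand side of (\ref{eq:58pull}) converges uniformly on $U \cap B(p_\infty, k)$ to $\phi_\infty - d_{K,p}^2/2$, and since a uniform limit of plurisubharmonic functions is plurisubharmonic --- e.g. by the characterization of plurisubharmonicity on a complex space through subharmonicity of all restrictions to holomorphic disks \cite{Fornaess-Narasimhan (1980)}, subharmonicity being stable under uniform limits --- we conclude that $\phi_\infty - d_{K,p}^2/2$ is plurisubharmonic near $x$. Reinstating the borderline case as indicated completes the proof.

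I expect the real work to lie not in any individual step but in two bookkeeping matters: (i) confirming that ``$BK \ge K$'' is genuinely a local condition and is independent of the chosen K\"ahler atlas, so that it is legitimate to check it against the ad hoc covering produced by Definition \ref{5.7} rather than a fixed one; and (ii) the two stability properties of plurisubharmonic functions used above --- invariance under holomorphic pullback and under uniform limits --- in the setting of complex spaces that can be singular and of strictly larger dimension than $X_\infty$. Both follow from the local embeddings together with the Fornaess-Narasimhan disk criterion, but they deserve to be stated with care in the singular case, and I would regard verifying them as the main point to get right.
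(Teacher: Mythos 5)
Your proposal is correct and takes essentially the same route as the paper: pull back $\phi_{i,j} - d_{K,m_i}^2/2$ along the holomorphic maps $r_{i,j}$, show it converges uniformly on $U_{\infty,j}$ to $\phi_{\infty,j} - d_{K,p}^2/2$ using the Gromov--Hausdorff approximations, and conclude from the stability of plurisubharmonicity under holomorphic pullback and uniform limits. The paper's proof is simply a terser version of this (your distance estimate and the Fornaess--Narasimhan disk-criterion justification are exactly the details it leaves implicit), and the separate treatment of the borderline case $K>0$, $\diam(X_\infty,d_\infty)=\frac{\pi}{\sqrt{2K}}$ is unnecessary here, since the hypothesis already supplies plurisubharmonicity with the actual $K$ on each approximant.
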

\begin{proof}
  Fix $k$. Given $p \in X_\infty$, let
  $\{m_i\}$ be points that approach it relative to the Gromov-Hausdorff
  convergence. Given
  $U_{\infty, j}$ as in Definition \ref{5.7}, we have
  \begin{equation} \label{5.9}
  \lim_{i \rightarrow \infty} r_{i,j}^* \left( \phi_{i,j} -
  d_{K,m_i}^2/2 \right) =
  \phi_{\infty, j} - d_{K,p}^2/2
  \end{equation}
  in $L^\infty(U_{\infty, j})$. As $r_{i,j}$ is holomorphic, it follows that
  $\phi_{\infty, j} - d_{K,p}^2/2$ is plurisubharmonic.
\end{proof}

Finally, in the setting of Proposition \ref{4.1}, a subsequence
converges in the complex Gromov-Hausdorff sense.

\begin{proposition} \label{5.10}
  Let $\{(M_i, p_i, g_i)\}_{i=1}^\infty$ be a sequence of pointed
$n$-dimensional complete K\"ahler manifolds with
$BK \ge K$.
Suppose that there is some
$v_0 > 0$ so that for all $i$, $\vol(B(p_i, 1)) \ge v_0$. Then a
subsequence converges in the pointed complex Gromov-Hausdorff topology.
  \end{proposition}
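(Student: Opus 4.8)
The plan is to verify Definition \ref{5.7} directly, feeding in the objects already manufactured in the proof of Proposition \ref{4.1}; almost nothing new is needed beyond a careful matching of notation. First I would record that each smooth K\"ahler manifold $(M_i, g_i)$ is a metric K\"ahler space in the sense of Definition \ref{5.1}: local smooth K\"ahler potentials exist, and the restriction of $\omega_i$ to any embedded holomorphic disk is its two-dimensional Hausdorff measure. Likewise, by Proposition \ref{4.1}(1)--(2), the pointed Gromov-Hausdorff limit $(X_\infty, p_\infty, d_\infty)$ produced there is a complete complex manifold carrying continuous plurisubharmonic potentials $\phi_\alpha$ with pluriharmonic differences, and part (2) says precisely that these make $X_\infty$ a metric K\"ahler space. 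So the content of the proposition is to produce, for each $k$, the Gromov-Hausdorff approximations and holomorphic ``corrections'' demanded by Definition \ref{5.7}.

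Next I would fix $k \in \Z^+$ and choose the data as follows. For the covering $\{U_{\infty,j}\}$ of $B(p_\infty, k)$ take the ball neighborhoods used in the proof of Proposition \ref{4.1}(2)--(3), each biholomorphic to the unit ball in $\C^n$, with $\phi_{\infty,j} := \phi_{U_{\infty,j}}(0)$. For $h_i$ take the pointed $\epsilon$-Gromov-Hausdorff approximations $B(p_\infty,k) \to B(p_i,k)$ coming from the convergence $\lim_{i\to\infty}(M_i,p_i,g_i) = (X_\infty,p_\infty,d_\infty)$ of Proposition \ref{4.1}(1). For $r_{i,j}$ take the holomorphic maps $\mu_i : (U_{\infty,j}, J_\infty) \to (B(p_\infty,k), J_i)$ of the proof of Proposition \ref{4.1}(3) (built via \cite{Hamilton (1977)}, approaching the identity smoothly), composed with the Hamilton-compactness comparison embeddings into $M_i$; and for $\phi_{i,j}$ take the smooth local K\"ahler potentials $\eta_i = (\mu_i^{-1})^* \phi_i(0)$ on $\mu_i(U_{\infty,j}) \subset M_i$ constructed there, for which it was shown that $\mu_i^* \eta_i = \phi_i(0) \to \phi_{\infty,j}$ uniformly on $U_{\infty,j}$.

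With these choices the three bullet points of Definition \ref{5.7} become exactly what was already proved in Proposition \ref{4.1}(3): $r_{i,j}$ is holomorphic by construction; $r_{i,j}$ is $\epsilon$-close to $h_i$ on $U_{\infty,j} \cap B(p_\infty,k)$ because $d_i(q,\mu_i(q)) \to 0$ uniformly in $q$ (the estimate obtained there from the distance-distortion inequality (\ref{newdist}) and the triangle inequality (\ref{triangle})) together with the fact that $h_i$ realizes the Gromov-Hausdorff convergence; and $r_{i,j}^* \phi_{i,j} = \mu_i^* \eta_i \to \phi_{\infty,j}$ uniformly is the final displayed limit in that proof. Passing to the subsequence of $i$'s already extracted in Proposition \ref{4.1} (for Hamilton compactness, the convergence of the complex structures $J_i \to J_\infty$, and the local Ricci flows), Definition \ref{5.7} holds.

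The hard part --- really the only step requiring thought --- is reconciling the two meanings of ``approximation'': Definition \ref{5.7} wants $r_{i,j}$ close to the fixed metric Gromov-Hausdorff map $h_i$ measured in $d_i$ on $M_i$, whereas the construction in Proposition \ref{4.1}(3) produces $\mu_i$ as a small smooth perturbation of the identity on $U_{\infty,j}$ relative to $g_\infty(t^\prime)$ for a positive Ricci-flow time $t^\prime$. Bridging them forces one to let $t \to 0$ in the distance-distortion estimate, which is exactly the passage carried out around (\ref{newdist})--(\ref{triangle}): on the fixed precompact $B(p_\infty,k)$, with the constants $\beta_k$ and $C_k$, the comparison is uniform enough that $\sup_q d_i(q,\mu_i(q)) \to 0$. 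Once this identification is recorded, no further estimates are needed and the proposition follows.
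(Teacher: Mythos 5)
Your proposal is correct and follows the same route as the paper, which simply cites the proof of Proposition \ref{4.1}(3); you have just made explicit the matching of the objects constructed there (the ball cover with potentials $\phi_U(0)$, the maps $\mu_i$ from \cite{Hamilton (1977)}, the potentials $\eta_i$, and the estimates (\ref{newdist})--(\ref{triangle})) with the requirements of Definition \ref{5.7}. No further comment is needed.
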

\begin{proof}
This follows from the proof of part (3) of Proposition \ref{4.1}.
\end{proof}

\section{Tangent cones} \label{sect6}

In this section, we prove an analog of Remark \ref{2.4}(3).

\subsection{Tangent cones as K\"ahler cones}

We first characterize tangent cones of noncollapsed limit spaces.

\begin{proposition} \label{6.1}
  Let $(X_\infty, p_\infty, d_\infty)$ be a limit space from
   Proposition \ref{4.1}. Let $T_{p_\infty} X_\infty$ be a tangent cone of
  $X_\infty$ at $p_\infty$. Then $T_{p_\infty} X_\infty$ is a
  K\"ahler cone that is biholomorphic to
  $\C^n$, with 
  $r^2/2$ as a K\"ahler potential. It has ``$BK \ge 0$''.
  \end{proposition}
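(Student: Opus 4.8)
The plan is to analyze the tangent cone by exploiting the fact that, on the approximating Ricci flows, the local potentials $\phi_U(0)$ satisfy $\phi_U(0) - d^2_{K,p_\infty}/2$ plurisubharmonic (this was the key conclusion of Proposition \ref{4.1}(3)), and to see what survives in the blow-up limit. First I would recall the setup of Proposition \ref{4.1}: near $p_\infty$ there is a potential $\phi = \phi_U(0)$, Lipschitz with respect to $d_\infty$, plurisubharmonic, with $\sqrt{-1} \partial \overline{\partial} \phi = \omega_\infty$ and with $\phi - d^2_{K,p_\infty}/2$ plurisubharmonic. Rescaling the metric by $\lambda_i \to \infty$ with $\lambda_i d_\infty \to d_{T}$ the tangent cone metric, the rescaled potentials $\lambda_i^2 \phi$ (suitably normalized by an additive constant and, if needed, a pluriharmonic correction to fix the value and gradient at $p_\infty$) should converge, after passing to a subsequence, to a potential $\phi_\infty$ on the tangent cone; the function $d^2_{K,p}$ rescales so that $\lambda_i^2 d^2_{K,p_\infty}(\lambda_i^{-1} \cdot) \to d_{T,p}^2$, i.e.\ the modified-distance correction washes out in the limit (from \eqref{3.22}, $d^2_{K,p} = d_p^2 + O(d_p^4)$, and the quartic term scales away). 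Hence in the limit $\phi_\infty - d_T^2/2$ is plurisubharmonic, which is the ``$BK \ge 0$'' condition once I know $T_{p_\infty}X_\infty$ is a metric K\"ahler space with potential $\phi_\infty$.

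The next step is to identify the complex structure and the cone structure. Since $d_\infty$ is locally biH\"older-equivalent to a smooth Riemannian metric with $|\mathrm{Rm}(g_\infty(t))| \le \alpha_k/t$ on $B(p_\infty,k) \times (0,S_k)$, standard arguments (as in the Cheeger–Colding–type theory, or directly via the parabolic rescaling of the Ricci flows $g_\infty(\lambda_i^{-2} t)$) show the tangent cone exists, is a metric cone $C(Y)$ with vertex $p_\infty$, and carries a complex structure $J_\infty$ for which it is a K\"ahler cone; I would cite the Ricci-flow blow-up machinery already invoked (Lee–Tam, Simon–Topping) together with the fact that noncollapsed, the volume ratio is monotone, forcing a cone. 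That the cone is biholomorphic to $\C^n$ follows because a complete K\"ahler cone with a global continuous plurisubharmonic potential, smooth complex structure, and Euclidean volume growth is biholomorphic to $\C^n$ — here I would lean on Liu's work (cited in Section \ref{sect4}) or on the more elementary observation that the radial vector field $r\partial_r$ is real holomorphic and its flow exhibits $C(Y)$ as a cone over a compact complex manifold with an $\R_{>0}$-action having a single fixed point, which forces $\C^n$. Then, since the metric cone structure gives $d_{T,p_\infty} = r$ exactly and the ``$BK \ge 0$'' condition forces equality in \eqref{3.7} for the cone metric (a cone has vanishing Ricci-type corrections along the radial direction in the relevant sense), the potential can be taken to be $r^2/2$: indeed $\phi_\infty$ and $r^2/2$ differ by a pluriharmonic function that is bounded by the Lipschitz estimate and homogeneous of degree two, hence a harmonic polynomial, and after absorbing it into the choice of holomorphic coordinates on $\C^n$ we get $\phi_\infty = r^2/2$.

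The main obstacle I anticipate is controlling the convergence of the rescaled potentials $\lambda_i^2 \phi$ and ensuring the limit is genuinely a metric K\"ahler space in the sense of Definition \ref{5.1} — i.e.\ that $\sqrt{-1}\partial\overline{\partial}\phi_\infty|_\Sigma$ really is the two-dimensional Hausdorff measure of $d_T|_\Sigma$ on holomorphic disks. This requires transporting the identity $\omega_\infty|_\Sigma = \mu_\infty$ from Proposition \ref{4.1}(2) through the blow-up; the monotonicity argument there (using $\widehat d_t$ nondecreasing as $t \to 0$) should have a scaling-invariant analog, but one must check the Hausdorff-measure comparison survives the limit uniformly on compact subsets of disks, which is where the biH\"older (not bi-Lipschitz) nature of the identification could bite. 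A secondary technical point is the normalization: subtracting the right pluriharmonic function at each scale so that the $\phi$'s don't run off to $\pm\infty$, using the Cheng–Yau gradient estimate (as in Proposition \ref{4.1}) to get the needed equicontinuity after rescaling. Once these are in place, the conclusion ``$BK \ge 0$'' is immediate from Definition \ref{5.2}, since $\phi_\infty - r^2/2 = 0$ is trivially plurisubharmonic, and more generally $\phi_\infty - d_{0,p}^2/2 = r^2/2 - d_{T,p}^2/2$ is plurisubharmonic for every $p$ by the limiting argument above.
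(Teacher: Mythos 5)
Your outline gets the complex structure via parabolic rescaling of the Ricci flow, which matches the paper, but the two central claims of Proposition \ref{6.1} are asserted rather than proved. First, the biholomorphism with $\C^n$: the principle you invoke (``a complete K\"ahler cone with a continuous plurisubharmonic potential, smooth complex structure and Euclidean volume growth is biholomorphic to $\C^n$'') is not a quotable theorem at this regularity, and the ``elementary observation'' is not correct as stated -- an $\R_{>0}$- (or $\C^*$-) action fixing only the vertex does not force $\C^n$ (affine cones over projective manifolds are the standard counterexamples), smoothness of the complex structure at the vertex is itself part of what must be established, and in this setting $r\partial_r$ is only known to be holomorphic on an open dense subset (Liu--Sz\'ekelyhidi), which is exactly why the paper uses the radial flow only in Section \ref{sect6}.2 under the extra radial-homogeneity hypothesis. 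The paper instead constructs the biholomorphism explicitly: it pulls back local holomorphic coordinates $z^a$ near $p_\infty$ through the embeddings coming from the rescaled flows, normalizes them in $L^2$, passes to limits to get $n$ holomorphic functions on the cone, and shows the resulting map $F$ is proper of degree one with finite fibers (Steinness of the sublevel sets of $|F|^2$), hence biholomorphic by the criterion in Grauert--Peternell--Remmert. Some argument of this kind is needed; your proposal has no substitute for it.

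Second, the statement that $r^2/2$ is a K\"ahler potential. Your step ``the cone structure forces equality in (\ref{3.7})'' begs the question: ``$BK \ge 0$'' only gives the inequality $\sqrt{-1}\partial\overline{\partial}\, r^2/2 \le \omega_\infty$, and since the cone metric is not smooth you cannot compute $\Hess(r^2/2)$ as on a smooth K\"ahler cone. With only the inequality, $\phi_\infty - r^2/2$ is plurisubharmonic, not pluriharmonic, so the ``harmonic polynomial, absorb into coordinates'' step collapses. The paper proves the equality by a genuinely different mechanism: Cheeger--Colding approximate distance-squared functions $\rho_i$ on the rescaled approximants satisfy $\|\widetilde{\Hess}\rho_i - \widetilde{g}_i\|_{L^1} = o(1)$, hence $\|\sqrt{-1}\partial\overline{\partial}\rho_i - \widetilde{\omega}_i\|_{L^1} = o(1)$, and this is transported to the cone through the pointed complex Gromov--Hausdorff convergence of Proposition \ref{5.10}, giving $\sqrt{-1}\partial\overline{\partial}(r^2/2) = \omega_\infty$ away from the vertex, then extended across the vertex by a removable-singularity argument for continuous pluriharmonic functions. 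Relatedly, the convergence and normalization difficulties you flag for the rescaled potentials $\lambda_i^2\phi$, and the Hausdorff-measure identity on disks, can be sidestepped entirely: the tangent cone is itself a noncollapsed pointed Gromov--Hausdorff limit of the smooth rescaled manifolds $(M_i, p_i, \mu_i^2 g_i)$ with $BK \ge K/\mu_i^2 \to 0$, so Proposition \ref{4.1} applies to it directly, and then ``$BK \ge 0$'' follows from Proposition \ref{5.4} because $\C^n$ is normal; this is the route the paper takes, and it also yields the metric K\"ahler space structure without re-running your blow-up of the potentials on $X_\infty$.
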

\begin{proof}
  As $X_\infty$ is a noncollapsed limit of Riemannian manifolds with a
  uniform lower Ricci bound, $T_{p_\infty} X_\infty$ is a metric cone
  of the same dimension whose
  link has diameter at most $\pi$
  \cite[Theorem 5.2]{Cheeger-Colding (1997)}.
  After passing to a subsequence,
  we can write $(T_{p_\infty} X_\infty, 0) =
  \lim_{i \rightarrow \infty} (M_i, p_i, \mu_i^2 g_i)$,
  a pointed Gromov-Hausdorff
  limit, where $\lim_{i \rightarrow \infty} \mu_i = \infty$.
  Hence $(T_{p_\infty} X_\infty, 0)$ is
  a noncollapsed pointed limit of manifolds with
  the lower bound on $BK$ going to zero. Proposition \ref{4.1} implies that
  it satisfies (\ref{3.16}) with $K=0$.

    Since a neighborhood of $x_\infty \in X_\infty$ is
  biholomorphic to a ball in $\C^n$,
  and $T_{p_\infty} X_\infty$ is a blowup limit, it makes sense that
  it should be biholomorphic to $\C^n$.
  To show this, we first construct the complex structure
on $T_{p_\infty} X_\infty$,
  using the
  K\"ahler-Ricci flow.
  
  By definition, $(T_{p_\infty} X_\infty, 0) =
  \lim_{k \rightarrow \infty}
  \left( X_\infty, p_\infty, \lambda_k d_\infty \right)$ as a pointed
  Gromov-Hausdorff limit, where
  $\lim_{k \rightarrow \infty} \lambda_k = \infty$.
  Let $g_\infty(\cdot)$ be the K\"ahler-Ricci flow constructed in the
  proof of 
  Proposition \ref{4.1}, with $t \rightarrow 0$ limit given by
  $(X_\infty, d_\infty)$.
The estimates (\ref{4.3})-(\ref{4.5}) are valid for
$g_\infty(\cdot)$.
Define
the parabolically rescaled
Ricci flows $g_{\infty,k}(u) = \lambda_k^2 g_\infty(\lambda_k^{-2} u)$.
  After passing to a
  subsequence of the $k$'s, we can assume that there is a
  pointed Cheeger-Hamilton limit
\begin{equation} \label{6.2}
  (T_{p_\infty} X_\infty, 0, g_{\infty,\infty}(\cdot)) =
  \lim_{k \rightarrow \infty} (X_\infty, p_\infty, g_{\infty,k}(\cdot))
  \end{equation}
on the time interval $(0, \infty)$.
Letting $B(0,l)$ denote the $l$-ball around the vertex $0$ in
$T_{p_\infty} X_\infty$, 
in taking the limit there
  are implicit embeddings $\sigma_{k,l} : B(0, l) \rightarrow
  X_\infty$ for large $k$ so that $g_{\infty, \infty}(\cdot) =
  \lim_{k \rightarrow \infty} \sigma_{k,l}^* g_{\infty, k}(\cdot)$
  on $[l^{-1}, l] \times B(0, l)$. In particular,
  $\sigma_{k,l}$ decreases distances by approximately
  $\lambda_k$, when going from $T_{p_\infty} X_\infty$ to
  $(X_\infty, d_\infty)$.

  As in the proof of Proposition \ref{4.1}, after passing to a
  subsequence, the pullbacks $\sigma_{k,l}^* J_\infty$ converge,
  as $k \rightarrow \infty$, to a complex structure on
  $B(0,l)$ (say relative to the metric
  $g_{\infty, \infty}(1)$). Applying a diagonal argument, we obtain the
  complex structure $J_{\infty, \infty}$ on $T_{p_\infty} X_\infty$.
  
  Let $\{z^a\}_{a=1}^n$ be local complex coordinates around $p_\infty$ for
  $X_\infty$. Note that $\sum_{a=1}^n |z^a|^2$ is strictly plurisubharmonic
  near $p_\infty$. Put $z^a_{k,l} = \sigma_{k,l}^* z^a$, which for large $k$ is
  a function on $B(0,l)$ that is holomorphic relative to
  $\sigma_{k,l}^* J_\infty $ and harmonic relative to
  $\sigma_{k,l}^* g_{\infty,k}(1)$. After a linear transformation,
  we can assume that
  $\int_{B(0,1)} z^a_{k,l} \overline{z^b_{k,l}} \:
  d\mu = 
  \delta_{ab}$, where
  $d\mu$ is the $n$-dimensional Hausdorff measure on
  $T_{p_\infty}
  X_\infty$.

  After passing to a subsequence of $k$'s, there is a limit
  $z^a_{\infty,l} = \lim_{k \rightarrow \infty} z^a_{k,l}$, where
  $\{ z^a_{\infty,l} \}_{a=1}^n$ are holomorphic functions on
  $B(0,l)$ with
  $\int_{B(0,1)} z^a_{\infty,l} \overline{z^b_{\infty,l}}
  \: d\mu = \delta_{ab}$.
  By a diagonal argument,
  we obtain independent holomorphic functions
  $\{ z^a_{\infty} \}_{a=1}^n$ on $T_{p_\infty} X_\infty$.
  Let $F : T_{p_\infty} X_\infty \rightarrow \C^n$ be given by
  $F(q) = \{z^a_\infty(q)\}_{a=1}^n$. One sees by approximation that
  $F$ is a proper holomorphic map of degree one, and the level sets of
  $|F|^2$ are Stein domains. The preimage $F^{-1}(w)$ of a point $w \in \C^n$
  is a compact
  subvariety in $T_{p_\infty} X_\infty$, so by the Stein property it is a
  finite set of points. It now follows from
  \cite[Proposition 14.7 on p. 87]{Grauert-Peternell-Remmert (1994)}
  that $F$ is biholomorphic.
  Proposition \ref{5.4} implies that $T_{p_\infty} X_\infty$ has ``$BK \ge 0$''.
  
To see that $r^2/2$ is a K\"ahler potential, we use an argument similar to
\cite[Section 4]{Liu (2018)}.
Let $(M_i, p_i, g_i)$ be a sequence as in the beginning of the
proof. Put $\widetilde{g}_i = \mu_i^2 g_i$ and
$\widetilde{d}_{p_i} = \mu_i d_{p_i}$.
Given $0 < a < b <
\infty$ and $\epsilon > 0$, by
\cite[Proposition 4.38, Corollary 4.42 and Corollary 4.83]{Cheeger-Colding (1996)} there is a smooth approximate distance-squared function
$\rho_i$ for $(M_i, p_i, \widetilde{g}_i)$, defined on the metric annulus
$\widetilde{d}_{p_i}^{-1}(a,b)$, so that
\begin{align} \label{6.4}
  \| \rho_i - \widetilde{d}^2_{p_i} \|_{L^2}^2 & = o(i^0), \\
  \| \widetilde{\nabla} \rho_i - \widetilde{\nabla} \widetilde{d}^2_{p_i} \|_{L^2}^2 & = o(i^0), \notag \\
  \| \widetilde{\Hess} \rho_i - \frac{1}{n} (\widetilde{\triangle} \rho_i)
  \widetilde{g}_i \|_{L^1} & = o(i^0). \notag
  \end{align}
From \cite[(4.25) and Proposition 4.35]{Cheeger-Colding (1996)}, we also have
\begin{equation} \label{6.5}
    \| \widetilde{\triangle} \rho_i - n \|_{L^1} = o(i^0).
\end{equation}
Hence
\begin{equation} \label{6.6}
  \| \widetilde{\Hess} \rho_i - 
  \widetilde{g}_i \|_{L^1} = o(i^0).
\end{equation}
In particular,
\begin{equation} \label{6.7}
  \| \sqrt{-1} \partial \overline{\partial} \rho_i - 
  \widetilde{\omega}_i \|_{L^1} = o(i^0). 
\end{equation}

From Proposition \ref{5.10}, after passing to a subsequence,
$\lim_{i \rightarrow \infty} (M_i, p_i, \widetilde{g}_i) =
(T_{p_\infty} X_\infty, 0)$ in the pointed complex Gromov-Hausdorff
topology. It follows from (\ref{6.7})
that if $\phi_\infty$ is a local K\"ahler potential
for $T_{p_\infty} X_\infty$, supported away from $0$, then
$\sqrt{-1} \partial \overline{\partial} \left(
\frac{r^2}{2} - \phi_\infty \right) = 0$ as a current.
Hence $\frac{r^2}{2}$ is a
K\"ahler potential for $T_{p_\infty} X_\infty - 0$. 

There is some continuous K\"ahler potential $\phi_0$ defined in a
neighborhood $U_0$ of $0$. Then $\frac{r^2}{2} - \phi_0$ is continuous on
$U_0$ and pluriharmonic on $U_0 - 0$. Thinking of it as a function in
a neighborhood of $0 \in \C^n$, it follows that $\frac{r^2}{2} - \phi_0$
extends to a continuous pluriharmonic function on $U_0$ (which is then
actually smooth).  Hence
$\frac{r^2}{2}$ is a K\"ahler potential on $T_{p_\infty} X_\infty$.
\end{proof}

\subsection{Curvature of the $\C P^{n-1}$ quotient}

We denote the generator of radial rescaling on $T_{p_\infty} X_\infty$  by $r \partial_r$.
From \cite[Proof of Proposition 15]{Liu-Szekelyhidi (2019)},
$r \partial_r$ and $J_{\infty, \infty} (r \partial_r)$
generate one-parameter groups
that are holomorphic on an open dense subset of $\C^n \cong
T_{p_\infty} X_\infty$.  The one parameter group $\{ \sigma_t \}$ generated by
$J_{\infty, \infty} (r \partial_r)$ acts isometrically on
$T_{p_\infty} X_\infty$ and preserves level sets of the distance function
$d_0$ from the vertex $p_\infty$. Following terminology about Sasaki manifolds,
we say that the structure is regular if $\{ \sigma_t \}$ comes from
a free $S^1$-action. Then the quotient of $T_{p_\infty} X_\infty$ by
the group action is a cone over a manifold.

In order to put ourselves in the setting of a regular structure, we
assume that $d_0$ is a radially
homogeneous function on $\C^n \cong T_{p_\infty} X_\infty$.
That is, letting $\zeta : \C^n - 0 \rightarrow \C P^{n-1}$ denote
the quotient map,
we assume that there are a number $\delta > 0$ and
a function $H \in C(\C P^{n-1})$ so that
\begin{equation}
d_0(z) = |z|^{\delta} H(\zeta(z))
\end{equation}
on $\C^n - 0$. (As an example, this is the case for a two dimensional cone.)
Then
\begin{equation} \label{6.3}
r \partial_r =
 \delta^{-1} \left( \sum_{\alpha=1}^n z^\alpha \partial_{z^\alpha} + 
 \sum_{\alpha=1}^n \overline{z}^\alpha \partial_{\overline{z}^\alpha} \right)
\end{equation}
and $\{ \sigma_t \}$ is the Hopf action on the level sets of $d_0$.
The quotient of the link $d_0^{-1}(1) = S^{2n-1}$ by the Hopf action is
$\C P^{n-1}$, with a possibly nonstandard quotient metric 
$d_{\C P^{n-1}}$.

Let $T$ be the tautological complex line bundle over
$\C P^{n-1}$, whose fibers are lines through the origin in $\C^n$.
The complement of the zero section in $T$ is biholomorphic to
$\C^n - 0$. We will also let $\zeta : T \rightarrow \C P^{n-1}$ denote
the projection map from $T$ to the base.
Consider a local holomorphic
trivialization of $T$ and let $w$ be the fiber coordinate,
with $w=0$ corresponding to the vertex $0 \in T_{p_\infty} X_\infty$.
Then $d_0^2 =
h |w|^{2 \delta}$ for some locally defined continuous function $h$ on
the base. We put a K\"ahler space structure on $\C P^{n-1}$ by saying that
$\frac12 \log h$ is a local potential.

\begin{proposition} \label{6.8}
$(\C P^{n-1}, d_{\C P^{n-1}})$ is a metric K\"ahler space with ``$BK \ge 2$''.
  \end{proposition}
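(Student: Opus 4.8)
The plan is to deduce both halves of the statement from Proposition \ref{6.1} by transporting the question onto the cone $T_{p_\infty}X_\infty$, using Lemma \ref{5.3} there together with an elementary plurisubharmonicity fact for line bundles. \emph{Geometric reduction.} Fix $p\in\C P^{n-1}$ and let $\ell_p\subset T_{p_\infty}X_\infty\cong\C^n$ be the complex line through the vertex lying over $p$; in cone coordinates it is the closed cone over the Hopf fibre over $p$. Since $T_{p_\infty}X_\infty$ has ``$BK\ge0$'', Lemma \ref{5.3} with $S=\ell_p$ and $K=0$ shows that $\frac{r^2}{2}-\frac{d_{\ell_p}^2}{2}$ is plurisubharmonic on $T_{p_\infty}X_\infty$. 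Minimizing the cone distance over the radial parameter and over the Hopf fibre (the link has diameter $\le\pi$, so the spherical cosine law applies) gives
\[
d_{\ell_p}^2 \;=\; r^2\sin^2\!\big(d_{\C P^{n-1}}(p,\zeta(\cdot))\big),\qquad
\frac{r^2}{2}-\frac{d_{\ell_p}^2}{2}\;=\;\frac{r^2}{2}\cos^2\!\big(d_{\C P^{n-1}}(p,\zeta(\cdot))\big),
\]
with the convention that $d_{\ell_p}=r$, so both displayed quantities vanish, where $d_{\C P^{n-1}}(p,\zeta(\cdot))\ge\frac{\pi}{2}$. In a local holomorphic trivialization of the tautological bundle $T$ over a chart of $\C P^{n-1}$ with fibre coordinate $w$ we have $r^2=d_0^2=h(u)\,|w|^{2\delta}$, so
\[
\frac{r^2}{2}-\frac{d_{\ell_p}^2}{2}\;=\;\tfrac12\,e^{2G(u)}\,|w|^{2\delta},\qquad
G(u)\;:=\;\tfrac12\log h(u)-\tfrac12\,d_{K,p}^2(u)\quad(K=2),
\]
using $d_{2,p}^2=-2\log\cos d_{\C P^{n-1}}(p,\cdot)$ from (\ref{1.1}). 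Note $G=\phi_j-d_{K,p}^2/2$ is exactly the function whose plurisubharmonicity Definition \ref{5.2} demands, so the problem has been moved onto $T_{p_\infty}X_\infty$.

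\emph{The line-bundle fact and ``$BK\ge2$''.} The analytic input is: if $\chi$ is upper semicontinuous on an open $V\subset\C^{n-1}$ and $(w,u)\mapsto e^{2\chi(u)}|w|^{2\delta}$ is plurisubharmonic on $V\times(\C\setminus\{0\})$, then $\chi$ is plurisubharmonic. For smooth $\chi$ this follows from a Schur-complement computation of the complex Hessian of $e^{2\chi(u)+2\delta\log|w|}$: its $w\overline w$-entry is $\delta^2/|w|^2>0$ and the Schur complement of that entry in the full Hessian is the $u$-Hessian of $\chi$; for merely upper semicontinuous $\chi$ one restricts to the complex lines $\mu\mapsto(w_0e^{\tau\mu},\,u_0+\mu v)$, along which the exponent is $2\chi(u_0+\mu v)+2\delta\,\re(\tau\mu)+\mathrm{const}$, and optimizes over $\tau\in\C$ after regularizing. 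Applying this with $\chi=G$ (continuous where $d_{\C P^{n-1}}(p,\cdot)<\frac{\pi}{2}$ and $=-\infty$ otherwise) gives that $\phi_j-d_{K,p}^2/2$ is plurisubharmonic; since $p$ and the chart were arbitrary, $\C P^{n-1}$ has ``$BK\ge2$''. As a byproduct, a plurisubharmonic function that is finite near $p$ on the connected manifold $\C P^{n-1}$ cannot be $-\infty$ on a nonempty open set, whence $d_{\C P^{n-1}}\le\frac{\pi}{2}$, so $d_{2,p}^2$ is in fact defined throughout $\C P^{n-1}$ and the assertion ``$BK\ge2$'' is meaningful.

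\emph{Metric K\"ahler space structure and the main difficulty.} For Definition \ref{5.1} one must check that for an embedded holomorphic disk $\Sigma\subset\C P^{n-1}$ the measure $\sqrt{-1}\partial\overline\partial(\tfrac12\log h)\big|_\Sigma$ is the two dimensional Hausdorff measure of $\Sigma$ for $d_{\C P^{n-1}}$. Here one uses that the regular Sasaki structure on the link $d_0^{-1}(1)$ presents $\C P^{n-1}$ as the base of a Riemannian submersion with base metric $d_{\C P^{n-1}}$, whose transverse K\"ahler form is $\sqrt{-1}\partial\overline\partial\log r$ read in transverse directions; as $\log r=\tfrac12\log h+\delta\log|w|$ and $\log|w|$ is pluriharmonic, this descends to $\sqrt{-1}\partial\overline\partial(\tfrac12\log h)$ on $\C P^{n-1}$, and the normalization $\tfrac12\log h$ is precisely the one making the descended form the K\"ahler form of the metric $d_{\C P^{n-1}}$ (as one checks in the flat model $\C^n$, where $\tfrac12\log h=\tfrac12\log(1+|u|^2)$ is the Fubini--Study potential of holomorphic sectional curvature $4$). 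On a holomorphic disk the K\"ahler form of a Riemannian metric restricts to its area form, i.e.\ the two dimensional Hausdorff measure; since $h$ is only continuous, this last step is made rigorous as in the proof of Proposition \ref{4.1}(2), approximating by the rescaled K\"ahler--Ricci flows and using that $d_{\C P^{n-1}}$ is biH\"older to a smooth metric. I expect this matching---pinning down, with the correct constant, the intrinsic quotient metric $d_{\C P^{n-1}}$ against the K\"ahler metric of the chosen potential $\tfrac12\log h$, and controlling the non-smooth limit---to be the main obstacle; the reduction to the cone, the cone-distance computation, and the Schur-complement fact are all routine.
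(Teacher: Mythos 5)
The ``$BK \ge 2$'' half of your argument is sound and runs parallel to the paper's: you make the same reduction via Lemma \ref{5.3} applied on the cone (Proposition \ref{6.1}) with $S$ the complex line over $p$, and the same cone-distance computation (the paper isolates it as Lemma \ref{6.9}, with the same $d_{\ell_p}=r\sin d_{\C P^{n-1}}$ formula), arriving at plurisubharmonicity of $\tfrac12 r^2\cos^2 d_{\C P^{n-1}}(p,\zeta(\cdot)) = \tfrac12 e^{2G(u)}|w|^{2\delta}$. Where you diverge is the extraction step: the paper proves $-\sqrt{-1}\partial\overline{\partial}\log C^2 \le \Omega$ by the explicit current computation (\ref{6.13})--(\ref{6.21}), whereas you invoke the fact that plurisubharmonicity of $e^{2\chi(u)}|w|^{2\delta}$ on $V\times(\C\setminus\{0\})$ forces $\chi$ plurisubharmonic; your Schur-complement/slicing argument is the classical characterization of subharmonicity via subharmonicity of $e^{u+\re(a\mu)}$ for all $a$, and it is a legitimate, arguably cleaner substitute that avoids manipulating $\partial\overline{\partial}$ of products of merely continuous functions. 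The observation that this forces $d_{\C P^{n-1}}\le \pi/2$ is a reasonable bonus.

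The genuine gap is in the other half, which you yourself flag as unresolved: verifying Definition \ref{5.1}, i.e.\ that $\sqrt{-1}\partial\overline{\partial}\left(\tfrac12\log h\right)\big|_{\Sigma}$ equals the two dimensional Hausdorff measure of $d_{\C P^{n-1}}$ on a holomorphic disk $\Sigma$, with the correct constant. Your proposed rigorization --- ``as in Proposition \ref{4.1}(2), approximating by the rescaled K\"ahler--Ricci flows and using that $d_{\C P^{n-1}}$ is biH\"older to a smooth metric'' --- does not apply as stated: the flows constructed in the proofs of Propositions \ref{4.1} and \ref{6.1} live on $X_\infty$ and on the cone, not on the quotient $\C P^{n-1}$, no smooth structure or biH\"older statement for $d_{\C P^{n-1}}$ is available, and the Sasaki/Riemannian-submersion picture is only a heuristic in this nonsmooth setting. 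The paper's device is to stay on the cone: it identifies the \emph{four} dimensional Hausdorff measure of $\Gamma=\zeta^{-1}(\Sigma)$ with $\tfrac12\bigl(\sqrt{-1}\partial\overline{\partial} r^2/2\bigr)^2$ using the rescaled flow (this is where the Proposition \ref{4.1}(2) argument is actually reused), and then descends by a fiberwise integration over the disk bundle $\{|w|\le 1\}$, comparing (\ref{6.15}) with (\ref{6.17}) and using that the fiber areas are proportional to $h|w|^{2\delta}$, to conclude $dA=\tfrac12\Omega$. Without some such argument your check of the metric K\"ahler space structure --- which is also needed for ``$BK\ge 2$'' to be a statement about a metric K\"ahler space in the sense of Definition \ref{5.2} --- remains a sketch rather than a proof.
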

\begin{proof}
Let $\pi : S^{2n-1} \rightarrow \C P^{n-1}$ be the quotient map.
Fix $z^\prime \in \C P^{n-1}$ and let $S \subset \C^n$ be the corresponding
complex line.

\begin{lemma} \label{6.9}
  Let $(r, s)$ denote a point in the metric cone $T_{p_\infty} X_\infty$
where $r \ge 0$ and $s \in S^{2n-1}$. 
  Put $z = \pi(s)$. Then
  $d((r,s), S) = r \sin \left( d_{\C P^{n-1}}(z, z^\prime) \right)$.
  \end{lemma}
\begin{proof}
By the definition of the metric cone,
\begin{equation} \label{6.10}
  d((r,s), (r^\prime, s^\prime)) =
  \sqrt{r^2 + (r^\prime)^2 - 2 r r^\prime \cos \left( d_{S^{2n-1}}(s,s^\prime)
  \right)}.
\end{equation}
Minimizing over $r^\prime$ gives
\begin{equation} \label{6.11}
  d((r,s), S) = r \min_{s^\prime \in S \cap S^{2n-1}} 
\sin \left( d_{S^{2n-1}}(s,s^\prime)
  \right).
\end{equation}
As the $S^1$-action is isometric, the lemma follows from the definition of
the quotient metric.
\end{proof}

From Lemma \ref{5.3}, we know that
\begin{equation} \label{6.12}
\phi - d_S^2/2 =
\frac12 r^2 \zeta^* \cos^2 d_{z^\prime}^2
\end{equation}
is
plurisubharmonic on $T_{p_\infty} X_\infty - 0 \cong \C^n - 0$.

Working locally on $\C P^{n-1}$ and putting
\begin{align} \label{6.13}
  Dw & = \delta \: dw + w h^{-1} \partial h, \\
  D\overline{w} & = \delta \: d\overline{w} + \overline{w} h^{-1}
  \overline{\partial} h, \notag \\
  \Omega & = \sqrt{-1} \partial \overline{\partial} \log h, \notag
  \end{align}
one finds
\begin{align} \label{6.14}
  \partial r^2 & = 
|w|^{2 \delta} h  w^{-1} Dw \\  
  \overline{\partial} r^2 & = 
  |w|^{2 \delta} h  \overline{w}^{-1} D\overline{w},
   \notag \\
  \sqrt{-1} \partial \overline{\partial} r^2 & =
  \sqrt{-1} |w|^{2 (\delta-1)} h D w \wedge D\overline{w} +
  |w|^{2\delta} h \Omega \notag
\end{align}
as currents.

To show that $\left( \C P^{n-1}, d_{\C P^{n-1}} \right)$ is a metric
K\"ahler space, it remains to show that if $\Sigma$ is a holomorphic
disk in the domain of $h$ then
$\frac12 \sqrt{-1} \partial \overline{\partial} \log h
\Big|_{\Dom(h) \cap \Sigma}$
equals the
two dimensional Hausdorff measure $dA$ on
$\Dom(h) \cap \Sigma$. Put $\Gamma = \zeta^{-1}(\Sigma)$, a
four dimensional submanifold of $T_{p_\infty} X_\infty - 0$.
Let ${\mathcal H}$ denote the four dimensional Hausdorff measure on $\Gamma$.
As
in the proof of Proposition \ref{6.1} there is a K\"ahler-Ricci flow
whose pointed Gromov-Hausdorff limit as
$t \rightarrow 0$ is $T_{p_\infty} X_\infty$.
Let ${\mathcal H}_t$ be the four dimensional Hausdorff measure on
$\Gamma$ coming from $d_t \Big|_{\Gamma}$. It equals
$\frac12 \left( \sqrt{-1} \partial \overline{\partial} \phi(t) \right)^2$,
where $\phi(t)$ is a local K\"ahler potential for the flow.
Using
\cite[Chapter 3.3]{Demailly (2012)}
and proceeding as in the proof of Proposition \ref{4.1}(2), it follows that
$\lim_{t \rightarrow 0} {\mathcal H}_t =
\frac12 \left( \sqrt{-1} \partial \overline{\partial} r^2/2 \right)^2$.
Also as in the proof of Proposition \ref{4.1}(2), we have
$\lim_{t \rightarrow 0} {\mathcal H}_t = {\mathcal H}$.
Hence
\begin{equation} \label{6.15}
{\mathcal H} = 
\frac12 \left( \sqrt{-1} \partial \overline{\partial} r^2/2 \right)^2 =
\frac14 \sqrt{-1} |w|^{4\delta-2} h^2 D w \wedge D\overline{w} \wedge
\Omega
\end{equation}
as a measure on $\Gamma$.

From (\ref{6.14}), the area form on a preimage of $\zeta$ is
\begin{equation}
\frac12 \sqrt{-1} \delta^2 |w|^{2 (\delta-1)} h dw \wedge d\overline{w}.
\end{equation}
Since the area of a level set of $w$ is proportionate to
$h |w|^{2 \delta}$, doing
a fiberwise integration on $\Gamma$ gives
\begin{equation} \label{6.17}
  \int_{|w| \le 1} {\mathcal H} = 
\left( \int_{B^2} \delta^2 |z|^{4\delta - 2} \cdot \frac12
  \sqrt{-1} dz \wedge d\overline{z} \right) h^2 dA.
\end{equation}
On the other hand, from (\ref{6.15}),
\begin{equation} 
  \int_{|w| \le 1} {\mathcal H} = 
\left( \int_{B^2} \delta^2 |z|^{4\delta - 2} \cdot \frac12
  \sqrt{-1} dz \wedge d\overline{z} \right) \cdot \frac12 h^2 \Omega.
\end{equation}
Thus $dA  = \frac12 \Omega$
on $\Dom(h) \cap \Sigma$. Since $\Omega$ equals
$\sqrt{-1} \partial \overline{\partial} \log h$,
this shows that $\left( \C P^{n-1}, d_{\C P^{n-1}} \right)$ is a metric
K\"ahler space. 

Finally, put $C = \cos d_{z^\prime} \in C(\C P^{n-1})$,
which we will identify with its pullback to $T$, and put
\begin{align} \label{6.18}
  D_C w & = Dw + w C^{-2} \partial C^2, \\
  D_C \overline{w} & = D\overline{w} +
  \overline{w} C^{-2} \overline{\partial} C^2. \notag
\end{align}
One finds
\begin{align} \label{6.19}
  \sqrt{-1} C^{-2} \partial \overline{\partial} (r^2 C^2) = &
  \sqrt{-1} |w|^{2(\delta-1)} h D_C w \wedge D_C \overline{w} \: +
  \\
  &
  |w|^{2 \delta} h \left( \Omega + \sqrt{-1}
  C^{-2} \partial \overline{\partial} C^2
- \sqrt{-1} C^{-4} \partial C^2 \wedge \overline{\partial} C^2 \right), \notag
\end{align}
as equalities of currents.
Hence from (\ref{6.12}), it follows that
\begin{equation} \label{6.20}
  \Omega + \sqrt{-1} C^{-2} \partial \overline{\partial} C^2
- \sqrt{-1} C^{-4} \partial C^2 \wedge \overline{\partial} C^2 \ge 0,
\end{equation}
or
\begin{equation} \label{6.21}
  - \sqrt{-1} \partial \overline{\partial} \log C^2 \le \Omega.
\end{equation}
Equivalently,
$\frac12 \log h - d_{2,z^\prime}^2/2$ is plurisubharmonic,
where $d_{2,z^\prime}^2$ is defined in (\ref{1.1}),
which means that
$\left( \C P^{n-1}, d_{\C P^{n-1}} \right)$ has
``$BK \ge 2$''.
\end{proof}

\end{document}